\newtheorem{theorem}{Theorem}[section]
\newtheorem{lemma}[theorem]{Lemma}
\newtheorem{proposition}[theorem]{Proposition}
\newtheorem{corollary}[theorem]{Corollary}
\theoremstyle{remark}
\newtheorem*{remark*}{\bf Remark}
\theoremstyle{definition}
\newtheorem{definition}[theorem]{Definition}
\newtheorem{remark}[theorem]{Remark}
\def\sqr#1#2{{\,\vcenter{\vbox{\hrule height.#2pt\hbox{\vrule width.#2pt
height#1pt \kern#1pt\vrule width.#2pt}\hrule height.#2pt}}\,}}
\def\bo{\sqr44\,}
\def\q{\quad}
\let\phi=\varphi
\def\epsilon{\varepsilon}
\def\R{\mathbb{R}}
\def\0{\mathbf{0}}
\def\to{\longrightarrow}
\def\re{\mathrm{Re}}
\def\JB{\mathrm{JB}}
\def\var{\mathrm{var}}
\def\ol{\overline}
\DeclareMathOperator{\Exp}{\mathrm{exp}_0}
\newcommand{\comment}[1]{}
\numberwithin{equation}{section}
\let\epsilon=\varepsilon
\def\@maketitle{%
  \newpage
  \null
  \vskip 2em%
  \begin{center}%
  \let \footnote \thanks
    {\Large\bfseries \@title \par}%
    \vskip 1.5em%
    {\normalsize
      \lineskip .5em%
      \begin{tabular}[t]{c}%
        \@author
      \end{tabular}\par}%
    \vskip 1em%
    {\normalsize \@date}%
  \end{center}%
  \par
  \vskip 1.5em}
\begin{document}

\title{\LARGE Horofunctions and metric compactification
of\\ noncompact Hermitian symmetric spaces\footnote{The authors gratefully acknowledge the support by the EPSRC (UK) (grant EP/R044228/1)}
}

\author[1]{Cho-Ho Chu%
\thanks{Email: \texttt{c.chu@qmul.ac.uk}}}
\affil[1]{School of Mathematical Sciences, Queen Mary, University of London, \newline London E1 4NS, UK}

\author[2]{Mar\'ia Cueto-Avellaneda%
\thanks{Email: \texttt{M.Cueto-Avellaneda@kent.ac.uk}}}
\affil[2]{School of Mathematics, Statistics \& Actuarial Science,
University of Kent,\newline Canterbury CT2 7NX, UK}

\author[2]{Bas Lemmens%
\thanks{Email: \texttt{B.Lemmens@kent.ac.uk}}}
\date{}

\maketitle


\begin{abstract}
Given a Hermitian symmetric space $M$ of noncompact type,  we give a complete description of the horofunctions in the metric compactification of $M$ with respect to the Carath\'eodory distance, via the realisation of  $M$ as the open unit
ball $D$ of a Banach space $(V,\|\cdot\|)$ equipped with a Jordan structure, called a $\mathrm{JB}^*$-triple.
The Carath\'eodory distance $\rho$ on $D$   has a Finsler structure. It is the integrated distance of
the Carath\'eodory differential metric, and the norm $\|\cdot\|$  in the realisation is the Carath\'eodory norm with respect to the origin $0\in D$.  We also identify the horofunctions of the metric compactification of $(V,\|\cdot\|)$  and relate  its geometry and global topology to the closed dual unit ball (i.e., the polar of $D$).  Moreover, we show that the exponential map $\exp_0 \colon V \longrightarrow D$ at
$0\in D$ extends to a homeomorphism between the metric compactifications of $(V,\|\cdot\|)$  and $(D,\rho)$, preserving the geometric structure. Consequently, the metric compactification of $M$ admits a concrete realisation as the  closed dual unit ball of $(V,\|\cdot\|)$.
\end{abstract}

{\small {\bf Keywords:}} Hermitian symmetric space, bounded symmetric domain, horofunction, metric compactification, $\mathrm{JB}^*$-triple, Carath\'eodory distance.

{\small {\bf Subject Classification:}  32M15,  17C65, 46L70, 53C60.}


\tableofcontents
\sectionfont{}
\section{Introduction}

Compactifications of symmetric spaces is a particularly rich subject, which has been studied extensively \cite{bj,gjt}.
A variety  of compactifications of symmetric spaces have been introduced with different applications in mind. For instance, Satake \cite{Sa} introduced his compactifications in his study of automorphic forms, and the Martin and Furstenberg compactifications \cite{Fu, Ma} were introduced to analyse harmonic functions.

Recently, metric (or horofunction) compactifications with respect to invariant Finsler metrics have been used to investigate different types of compactifications of symmetric spaces. In particular,  it was shown in \cite{HSWW,S1} that  generalised Satake compactifications and  Martin compactifications of  symmetric spaces can be realised as  metric compactifications under   suitable invariant Finsler metrics. In \cite{KL} an explicit invariant Finsler metric was constructed on symmetric spaces whose metric compactification gives the maximal Satake compactification, and in \cite{GJ} the minimal Satake compactification of $\mathrm{SL}_n(\mathbb{R})/\mathrm{SO}_n$ was realised as a metric compactification.

However, for relatively few noncompact type symmetric spaces, with invariant Finsler metrics, is the metric compactification well understood.  In this paper we completely determine the metric compactifcation of noncompact type Hermitian symmetric spaces under the Carath\'eodory distance and provide a detailed analysis of its geometry and global topology.  The  Carath\'eodory distance plays an important role in the geometry and analysis of Hermitian symmetric spaces, hence it is natural to consider its metric compactification.

Given a Hermitian symmetric space $M$ of noncompact type, the Harish-Chandra embedding,
 $$M\approx \mathcal{D} \subset \mathfrak p^+ \hookrightarrow M_c,$$
 identifies $M$ bihomorphically with a bounded symmetric domain $\mathcal{D}$ in a complex Euclidean space $\mathfrak p^+$,
 which is biholomorphic to an open dense subset of the compact dual $M_c$ of $M$. By the seminal works of Loos \cite{loos} and Kaup \cite{kaup}, one can equip $\mathfrak p^+$ with a Jordan algebraic structure and a norm $\|\cdot\|$ so that
$\mathcal{D}$  is biholomorphic to the open unit ball $D$ of  $(\mathfrak p^+, \|\cdot\|)$, which is called a {\em $\mathrm{JB}^*$-triple.}
Since the Carath\'eodory distance is invariant under biholomorphisms, one can transfer from $M$ to the open unit ball $D$ in the $\mathrm{JB}^*$-triple $V = (\mathfrak p^+, \|\cdot\|)$ to study the metric compactification and exploit Jordan theory and functional analysis. This is our task in the paper.

For symmetric spaces of noncompact type, the metric compactification with respect to the Riemannian distance can be identified with its geodesic compactification (\cite{ball} and \cite[Proposition 12.6]{jm}), and is known to be homeomorphic to a Euclidean ball, see  \cite[Chapter II.8]{BH}.  It has also been observed for various classes of finite dimensional normed spaces that the geometry and global topology of the metric compactification is closely related to the closed dual unit ball of the norm. In \cite{JS1,JS} this connection was established for  normed spaces with polyhedral unit balls, see also \cite{CKS}. At present, however, it is unknown \cite[Question 6.18]{KL} if this duality phenomenon holds for general finite dimensional normed spaces.

In \cite{L2,LP} it was shown that the duality phenomenon does not only appear in finite dimensional normed spaces, but also occurs in metric compactifications of certain symmetric spaces with invariant Finsler norms. More precisely, it was shown in \cite{L2,LP} for symmetric cones equipped with the Thompson  and  Hilbert distances that the geometry and global topology of the metric compactification coincides with the geometry of the closed dual unit ball of the Finsler norm in the tangent space at the base point of the metric compactification.

In these symmetric spaces $N$ the connection between the geometry of the metric compactification and the dual unit ball manifests itself in the following way. The horofunction boundary $N(\infty)$ in the metric compactification of the symmetric space $N$ carries an equivalence relation where two horofunctions $g$ and $h$ are equivalent if $\sup_{x\in N} |g(x)-h(x)|<\infty$. This relation yields a natural partition of the horofunction boundary.    On the other hand, the boundary of the closed dual unit ball  $B^*$ of the Finsler norm on the tangent space $T_bN$ at the base point $b\in N$, is partitioned by the relative interiors of its boundary faces.   For the symmetric spaces $N$ considered in \cite{L2,LP} it was shown that there exists a homeomorphism $\phi$ from the metric compactification $N\cup N(\infty)$ onto  $B^*$, which maps each equivalence class of $N(\infty)$ onto the relative interior of a boundary face of $B^*$. In this sense the dual ball captures the geometry of the metric compactification.

Moreover, in \cite{L2} it was observed that the metric compactification of $N$  is closely related to the metric compactification of the normed space  $(T_bN,\|\cdot\|_b)$, where $\|\cdot\|_b$ is the Finsler norm  on the tangent space $T_bN$  at the base point $b\in N$. More explicitly, it was shown that the exponential map $\exp_b\colon T_bN\to N$ extends to a homeomorphism between the metric compactifications of the normed space $(T_bN,\|\cdot\|_b)$  and  $N$ under the invariant Finsler distance.  Furthermore, the extension preserves the equivalence classes in the horofunction boundaries.   A case in point is the  Hilbert distance on  $\mathrm{SL}_n(\mathbb{C})/\mathrm{SU}_n$, which can be realised as the manifold of positive-definite Hermitian $n\times n$ matrices with determinant $1$.  The tangent space at the identity $I$ is the real vector space of the $n\times n$ Hermitian matrices with trace $0$ and has Finsler norm, $|A|_I = \max \sigma(A) -\min \sigma(A)$, where $\sigma(A)$ is the set of eigenvalues of $A$.

 For a noncompact type Hermitian symmetric space $M$  identified as the open unit ball  $D$ of a JB*-triple $V=(\frak p^+, \|\cdot\|)$, with Carath\'eodory distance $\rho$, the $\mathrm{JB}^*$-triple norm $\|\cdot\|$ is the corresponding Finsler norm
 \cite[(4.5)]{kaup}. In this paper we show that the analogues of the results in \cite{L2,LP}  hold for the metric compactification of $(D, \rho)$.

More specifically, the following results are established. We provide a complete description of the horofunctions of $(D, \rho)$  in Theorem  \ref{hbsd}. We also determine the horofunctions of the $\mathrm{JB}^*$-triple $(V,\|\cdot\|)$ in Theorem \ref{formofh}, and establish in
 Theorem \ref{homeo}
 an explicit  homeomorphism $\phi$ between the horofunction compactification of $(V,\|\cdot\|)$ and its closed dual unit ball. Further, we show in Section \ref{GeomComp} that the homeomorphism $\phi$ maps each equivalence class in the horofunction boundary of $(V,\|\cdot\|)$ onto the relative interior of a boundary face of the closed dual unit ball.  Finally we prove in Section \ref{ExpHom} that the exponential map $\Exp\colon V\to D$  extends to  a homeomorphism between the metric compactifications $V \cup V(\infty)$ and $D \cup D(\infty)$, which maps equivalence classes onto equivalence classes in the horofunction boundaries, see Theorem \ref{exp}. Combining the results we see that the geometry and global topology of the metric compactification of $(D, \rho)$ coincides with the geometry of the closed dual unit ball of $(V,\|\cdot\|)$.

We start by recalling the essential background on metric compactifications and the theory of $\mathrm{JB}^*$-triples in Sections \ref{Hor} and \ref{jordan}, respectively.

\section{Horofunctions}\label{Hor}

The origins of the idea of the metric (or horofunction) compactification  go back to Gromov \cite{ball}. It has proven to be a valuable tool in numerous fields such as, geometric group theory, complex and real dynamics, Riemannian geometry, and geometric analysis.  It captures asymptotic geometric properties of metric spaces and provides ways to analyse mappings or groups acting on them.  Here we mostly follow the set up as in \cite{Ri}.

Let $(X,d)$ be a metric space and let $\mathbb{R}^X$ be the space of all real functions on $X$ equipped with the  topology of pointwise convergence. Fix  a point $b\in X$, called the {\em base point}, and write $\mathrm{Lip}^1_b(X)$ to denote the set of all functions $h\in\mathbb{R}^X$ such that $h(b)=0$ and $h$ is $1$-Lipschitz, i.e., $|h(x)-h(y)|\leq d(x,y)$ for all $x,y\in X$.

The set $\mathrm{Lip}^1_b(X)$ is a compact subset of $\mathbb{R}^X$. Indeed, it is easy to verify that the complement of $\mathrm{Lip}^1_b(X)$ is open. Moreover, as $|h(x)|= |h(x)-h(b)|\leq d(x,b)$ for all  $x\in X$ and $h\in \mathrm{Lip}^1_b(X)$, we have that $\mathrm{Lip}_b^1(X)\subseteq [-d(x,b),d(x,b)]^X$. The set $[-d(x,b),d(x,b)]^X$ is compact by Tychonoff's theorem, and hence $\mathrm{Lip}^1_b(X)$ is a compact subset of $\mathbb{R}^X$.

 For $y\in X$ define the real valued function,
\begin{equation}\label{internalpoint}
h_{y}(z) = d(z,y)-d(b,y)\qquad (z\in X).
\end{equation}
Note that $h_y(b)=0$ and $|h_y(z)-h_y(w)| = |d(z,y)-d(w,y)|\leq d(z,w)$ for all $z,w\in X$, and hence $h_y\in  \mathrm{Lip}_b^1(X)$.

\begin{definition}\label{metric_compactifcation} Denote the closure of $\{h_y\colon y\in X\}$ in $\mathbb{R}^X$ by  $\overline{X}$, which is compact. The set $X(\infty)= \overline{X}\setminus \{h_y\colon y\in X\}$ is called the {\em horofunction boundary of $(X,d)$}. The elements of $X(\infty)$ are called {\em horofunctions}, and the set $X\cup X(\infty)$ is called the {\em metric (or horofunction) compactification of $(X,d)$}. The elements of $\overline{X}$ are called {\em metric functionals}, and the metric functionals $h_y$  in (\ref{internalpoint}) are called {\em internal points}.
\end{definition}
The topology of pointwise convergence on  $ \mathrm{Lip}_b^1(X)$ coincides with the topology of uniform convergence on compact sets, see \cite[p.\,291]{Mun}. In general the topology of pointwise convergence on  $ \mathrm{Lip}_b^1(X)$ is not metrizable, and hence  horofunctions are limits of nets  rather than sequences. However, if the metric space is separable, then the topology is metrizable and each horofunction is the limit of a sequence. In fact, one can verify that  given a countable dense subset  $\{y_k\colon k\in\mathbb{N}\}$ of $(X,d)$, the function $\sigma$ on $\mathrm{Lip}_b^1(X)\times \mathrm{Lip}_b^1(X)$ given by,
\[
\sigma(f,g) = \sum_k 2^{-k}\min\{1,|f(y_k)-g(y_k)|\}\mbox{\quad  for $f,g\in  \mathrm{Lip}_b^1(X)$,}
\]
is a metric whose topology coincides with the pointwise convergence topology on $\mathrm{Lip}_b^1(X)$,  \cite[p.\,289, Ex.\,10*]{Mun}.

The metric compactification may not be a compactification in the usual topological sense, as the  embedding $\iota\colon y\in X\mapsto h_y \in   \mathrm{Lip}_b^1(X)$ may fail to have the necessary properties. However, the embedding $\iota\colon X\to  \iota(X)$ is always a continuous bijection. Indeed, if $x\in X$ and we consider a neighbourhood $U=\{h\in \mathrm{Lip}_b^1(X)\colon |h(y) - h_x(y)|<\epsilon ,\, y\in X \}$ of $h_x$ for $\epsilon>0$, then for $z\in X$ with $d(x,z)< \epsilon/2$ we have that
$
|h_z(y) - h_x(y) | \leq |d(y,z) -d(y,x)| + |d(b,z) - d(b,x)|\leq 2d(z,x) <\epsilon,
$
and hence $\iota(z) \in U$, which shows that $\iota$ is continuous. Moreover, if $x,z\in X$ and $h_x=h_z$, then $0 = (h_z(x) - h_x(x)) +(h_x(z) -h_z(z))  = 2d(x,z)$, which gives injectivity.

It can happen that  $\iota\colon X \to   \iota(X)$  does not have a continuous inverse.  If,  however,  $(X,d)$ is proper and geodesic, then the metric compactification will be a compactification in the usual topological sense. We provide some details of this fact below. Recall that a metric space $(X,d)$ is {\em proper} if all its closed balls are compact.  Note that a proper metric space is separable, as each closed ball is compact and hence separable.

A map $\gamma$ from a, possibly unbounded, interval $I\subseteq \mathbb{R}$ into a metric space $(X,d)$ is called a {\em geodesic path} if
\[
d(\gamma(s),\gamma(t)) = |s-t|\mbox{\quad for all }s,t\in I.
\]
The image, $\gamma(I)$, is called a {\em geodesic}. A metric space $(X,d)$ is said to be {\em geodesic} if for each $x,y\in X$ there exists a geodesic path $\gamma\colon [a,b]\to X$ connecting $x$ and $y$, i.e, $\gamma(a)=x$ and $\gamma(b) = y$.

Hermitian symmetric spaces with  Carath\'eodory distance are proper geodesic metric spaces (cf.\,\cite{gromov}). In the discussion below we will focus on the metric compactification of such metric spaces.

The horofunctions of a proper geodesic metric space $(X,d)$ are precisely the limits of converging sequences $(h_{x_k})$ such that $d(b,x_k)\to\infty$. A slightly stronger assertion was shown in \cite[Theorem 4.7]{Ri}, but for our purposes the following statement will suffice, see also \cite[Lemma 2.1]{LLN}.
\begin{lemma}\label{Rieffel}
If $(X,d)$ is a proper geodesic metric space, then $h\in  X(\infty)$ if and only if there exists a sequence $(x_k)$ in $X$ with $d(b,x_k)\to \infty$ such that $(h_{x_k})$ converges to $h\in \overline{X}$ as $k\to \infty$.
\end{lemma}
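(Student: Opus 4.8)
The plan is to prove both implications directly from the definitions, using two structural consequences of the hypotheses. First, a proper metric space is separable, so by the remarks preceding the lemma the pointwise topology on $\mathrm{Lip}_b^1(X)$ is metrizable and every element of $\overline{X}$ is a \emph{sequential} limit of internal points. Second, properness lets me extract convergent subsequences of points lying in a fixed closed ball, while being geodesic lets me manufacture points at prescribed distances from the base point $b$. These two facts handle the two directions, respectively.

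For the ``only if'' direction, I would start from $h\in X(\infty)\subseteq\overline{X}$ and invoke metrizability to obtain a sequence $(x_k)$ in $X$ with $h_{x_k}\to h$. The only thing left to check is that $d(b,x_k)\to\infty$. I would argue by contradiction: if some subsequence stayed in a closed ball $\overline{B}(b,R)$, then properness would give a further subsequence $x_{k_j}\to x^\ast\in X$, and the continuity of the embedding $\iota\colon y\mapsto h_y$ established above would force $h_{x_{k_j}}\to h_{x^\ast}$. By uniqueness of limits this yields $h=h_{x^\ast}$, an internal point, contradicting $h\in X(\infty)$.

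For the ``if'' direction, suppose $d(b,x_k)\to\infty$ and $h_{x_k}\to h$, and suppose for contradiction that $h=h_y$ is an internal point. The key observation is that an internal point has finite infimum, namely $\inf_{z} h_y(z)=h_y(y)=-d(b,y)$, whereas an escaping sequence probes arbitrarily negative values. To make this precise I would fix $t>0$ and, for all large $k$ (so that $d(b,x_k)>t$), choose a geodesic path from $b$ to $x_k$ and let $z_k$ be its point at distance $t$ from $b$; then $d(b,z_k)=t$ and $d(z_k,x_k)=d(b,x_k)-t$, so $h_{x_k}(z_k)=-t$. Since the $z_k$ lie in the compact ball $\overline{B}(b,t)$, properness yields a subsequence $z_{k_j}\to z^\ast$; combining the uniform $1$-Lipschitz bound on the $h_{x_k}$ with pointwise convergence, I get $h(z^\ast)=\lim_j h_{x_{k_j}}(z_{k_j})=-t$. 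As $t>0$ was arbitrary this shows $\inf_z h(z)=-\infty$, which is incompatible with $h=h_y$; hence $h\in X(\infty)$.

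I expect the genuine content to be concentrated in the ``if'' direction, specifically in the step that passes pointwise convergence through the moving test points $z_k$: properness is exactly what keeps these points in a compact set, and the uniform Lipschitz estimate $|h_{x_{k_j}}(z_{k_j})-h(z^\ast)|\le d(z_{k_j},z^\ast)+|h_{x_{k_j}}(z^\ast)-h(z^\ast)|$ is what lets the two small quantities be controlled simultaneously. The remaining ingredients --- metrizability, continuity of $\iota$, and the elementary computation $h_{x_k}(z_k)=-t$ along a geodesic --- should be routine once this mechanism is in place.
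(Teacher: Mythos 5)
Your proof is correct: the ``only if'' direction (metrizability from separability of a proper space, plus properness and continuity of $\iota$ to rule out bounded subsequences) and the ``if'' direction (points $z_k$ at distance $t$ along geodesics giving $h_{x_k}(z_k)=-t$, hence $\inf_z h(z)=-\infty$, which no internal point $h_y\geq -d(b,y)$ can satisfy) are both sound, and properness and geodesicity are each used exactly where they are needed. Note that the paper does not prove this lemma itself --- it only cites Rieffel and Lemmens--Lins--Nussbaum --- and your argument is essentially the standard one appearing in those references, so it fills the gap with the same approach rather than a different one.
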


We  use this lemma to show that, in this case, the embedding $\iota\colon  X\to  \iota(X)$ has a continuous inverse.
\begin{lemma} If $(X,d)$ is a proper geodesic metric space, then $\iota\colon X\to  \iota(X)$  is a homeomorphism.
\end{lemma}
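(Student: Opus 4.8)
The plan is to exploit the fact, already recorded above, that $\iota$ is a continuous bijection onto $\iota(X)$, so that only the continuity of the inverse $\iota^{-1}\colon \iota(X)\to X$ remains to be shown. Since $(X,d)$ is proper it is separable, and hence the pointwise topology on $\mathrm{Lip}_b^1(X)$ (and therefore on its compact subset $\overline{X}$) is metrizable. Consequently it suffices to prove that $\iota^{-1}$ is \emph{sequentially} continuous. I would therefore begin with a sequence $(x_n)$ in $X$ and a point $x\in X$ for which $h_{x_n}\to h_x$ pointwise, and aim to conclude that $d(x_n,x)\to 0$.

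The crucial first step is to show that $(x_n)$ stays bounded, i.e. that $\sup_n d(b,x_n)<\infty$, and this is exactly where Lemma \ref{Rieffel} does the work. If $(d(b,x_n))$ were unbounded, I would pass to a subsequence along which $d(b,x_{n_k})\to\infty$, and then, using compactness (hence sequential compactness) of $\overline{X}$, extract a further subsequence along which $h_{x_{n_{k_j}}}$ converges in $\overline{X}$ to some $g$. By Lemma \ref{Rieffel} this limit $g$ lies in $X(\infty)$. On the other hand the whole sequence converges to $h_x$, so $g=h_x$, which is an internal point and hence does not lie in $X(\infty)$; this contradiction shows that some closed ball $\overline{B}(b,R)$ contains every $x_n$.

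With boundedness in hand the conclusion is a routine compactness argument. Properness makes $\overline{B}(b,R)$ compact, so every subsequence of $(x_n)$ admits a convergent sub-subsequence $x_{m_j}\to y$ in $(X,d)$. Continuity of $\iota$ yields $h_{x_{m_j}}\to h_y$, while the hypothesis yields $h_{x_{m_j}}\to h_x$; by the injectivity of $\iota$ established above we obtain $y=x$. Thus every convergent subsequence of $(x_n)$ has limit $x$, and since $(x_n)$ lies in a compact set this forces $x_n\to x$. Hence $\iota^{-1}$ is sequentially continuous, therefore continuous, and $\iota$ is a homeomorphism onto its image.

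The only genuine obstacle is the boundedness step, and it is precisely the point at which properness and geodesicity enter, through Lemma \ref{Rieffel}: without the guarantee that a sequence escaping to infinity produces a boundary horofunction, one could not exclude the degenerate possibility of internal points being approached by points running off to infinity, and the inverse could then fail to be continuous. Everything after the boundedness estimate is standard metric-space bookkeeping.
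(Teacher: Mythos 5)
Your proof is correct and follows essentially the same route as the paper's: reduce to sequential continuity of $\iota^{-1}$ via separability/metrizability, use Lemma \ref{Rieffel} to rule out unboundedness (the paper states this step more tersely, while you spell out the contradiction with internal points not being horofunctions), and then finish by properness, compactness of closed balls, and injectivity of $\iota$. The only difference is one of exposition, not substance.
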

\begin{proof} From the previous observations it remains to show that $\iota \colon X\to  \iota(X)$ has a continuous inverse.
Let $h_{z_0} =\iota(z_0)$ where $z_0\in X$.  Note that as $(X,d)$ is proper it is also separable. So, to prove continuity of $\iota^{-1}$ at $h_{z_0}$, we can use sequences, as the topology of pointwise convergence on $\mathrm{Lip}_b^1(X)$ is metrizable. Let $(z_k)$ be a sequence in $X$ with $h_{z_k}\to h_{z_0}$. By Lemma \ref{Rieffel} we know that $(z_k)$ is bounded, and hence after taking a subsequence we may assume that $z_k\to z$. It follows that $h_z=h_{z_0}$, and hence the injectivity of $\iota$ implies that $z=z_0$, which completes the proof.
\end{proof}
Thus, the metric compactification is a compactification in the usual topological sense if $(X,d)$ is a proper geodesic space.

Special horofunctions come from so-called almost geodesics sequences. They were introduced by Rieffel \cite{Ri} and further developed by Walsh and co-workers in \cite{AGW,LW,Wa2,Wa1}.  A sequence $(x_k)$  in $(X,d)$ is called an {\em almost geodesic} if for each $\epsilon>0$ there exists an $N\geq 0$ such that
\[
d(x_n,x_m) +d(x_m,x_0) - d(x_n,x_0) <\epsilon\mbox{\quad for all }n\geq m\geq N.
\]
In particular, every unbounded almost geodesic sequence yields a horofunction for a proper geodesic metric space, see \cite{Ri}.
\begin{lemma} Let $(X,d)$ be a proper geodesic metric space. If $(x_k)$ is an unbounded almost geodesic in $(X,d)$, then
\[
h(z) = \lim_k d(z,x_k)-d(b,x_k)
\]
exists for all $z\in X$ and $h\in X(\infty)$.
\end{lemma}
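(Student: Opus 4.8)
The plan is to prove pointwise convergence by isolating an auxiliary quantity that the almost-geodesic condition forces to be ``almost nonincreasing'', and then to place the resulting limit in the boundary by a direct appeal to Lemma \ref{Rieffel}. Throughout I fix $z\in X$ and write, for each $k$, $\alpha_k(z)=d(z,x_k)-d(x_0,x_k)$, so that the sequence whose limit we want is $h_{x_k}(z)=d(z,x_k)-d(b,x_k)=\alpha_k(z)-\alpha_k(b)$. Note that the reference point here is $x_0$, matching the definition of almost geodesic, rather than the base point $b$.

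First I would show that $(\alpha_k(z))_k$ converges for every $z$. Fix $\epsilon>0$ and let $N$ be as in the definition of almost geodesic. For $n\geq m\geq N$ write $\alpha_n(z)-\alpha_m(z)=[d(z,x_n)-d(z,x_m)]-[d(x_0,x_n)-d(x_0,x_m)]$. The first bracket is at most $d(x_n,x_m)$ by the triangle inequality, while the almost-geodesic inequality rearranges to $d(x_0,x_n)-d(x_0,x_m)>d(x_n,x_m)-\epsilon$; subtracting gives $\alpha_n(z)-\alpha_m(z)<\epsilon$ whenever $n\geq m\geq N$. Taking $\limsup_n$ and then $\liminf_m$ yields $\limsup_n\alpha_n(z)\leq\liminf_m\alpha_m(z)+\epsilon$ for every $\epsilon>0$, hence $\limsup_n\alpha_n(z)\leq\liminf_m\alpha_m(z)$, so $\lim_k\alpha_k(z)$ exists. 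Writing $A(z)=\lim_k\alpha_k(z)$, it follows that $h(z)=\lim_k h_{x_k}(z)=A(z)-A(b)$ exists for all $z\in X$.

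It remains to check that $h\in X(\infty)$, for which I would use Lemma \ref{Rieffel}. Each $h_{x_k}$ lies in the compact set $\mathrm{Lip}^1_b(X)$, and the pointwise convergence just established means that $h_{x_k}\to h$ in $\overline{X}$ with $h\in\mathrm{Lip}^1_b(X)$. To apply Lemma \ref{Rieffel} I must verify that $d(b,x_k)\to\infty$. Since the sequence is unbounded, $\sup_k d(x_0,x_k)=\infty$; on the other hand the almost-geodesic inequality gives $d(x_0,x_n)\geq d(x_0,x_m)-\epsilon$ for all $n\geq m\geq N$ (using $d(x_n,x_m)\geq 0$), so for any $R$ one may choose $m\geq N$ with $d(x_0,x_m)>R+\epsilon$ and conclude $d(x_0,x_n)>R$ for all $n\geq m$; thus $d(x_0,x_k)\to\infty$ and therefore $d(b,x_k)\geq d(x_0,x_k)-d(b,x_0)\to\infty$. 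Lemma \ref{Rieffel} then gives $h\in X(\infty)$.

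The only real obstacle is the first step: recognising that the almost-geodesic condition forces the auxiliary sequence $(\alpha_k(z))$ to decrease by at most $\epsilon$ along the diagonal $n\geq m$, which is precisely what sandwiches its $\limsup$ below its $\liminf$. Once the correct quantity $\alpha_k(z)=d(z,x_k)-d(x_0,x_k)$ is identified, everything else is a routine $\epsilon$-argument, a short monotonicity estimate for unboundedness, and a direct appeal to Lemma \ref{Rieffel}.
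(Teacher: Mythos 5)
Your proof is correct. There is nothing in the paper to compare it against: the lemma is stated as a known fact with a citation to Rieffel \cite{Ri}, so the paper contains no internal proof. Your argument is essentially the standard one behind that citation, adapted to the sequence formulation of almost geodesics used here: the almost-geodesic inequality makes $\alpha_k(z)=d(z,x_k)-d(x_0,x_k)$ almost nonincreasing, in the sense that $\alpha_n(z)<\alpha_m(z)+\epsilon$ for $n\geq m\geq N$, which sandwiches $\limsup_n\alpha_n(z)$ below $\liminf_m\alpha_m(z)$; the identity $h_{x_k}(z)=\alpha_k(z)-\alpha_k(b)$ then yields pointwise convergence of $(h_{x_k})$, and your verification that unboundedness together with the almost-geodesic inequality forces $d(x_0,x_k)\to\infty$, hence $d(b,x_k)\to\infty$, is exactly what is needed to invoke Lemma \ref{Rieffel} and place the limit in $X(\infty)$. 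One small point you use implicitly and should state: the inequality $\limsup_n\alpha_n(z)\leq\liminf_m\alpha_m(z)$ only gives a (finite) limit because $(\alpha_k(z))$ is bounded, which follows from $|\alpha_k(z)|\leq d(z,x_0)$ by the triangle inequality; with that one-line remark added, the proof is complete.
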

Given a proper geodesic metric space $(X,d)$, a horofunction $h$ is called a {\em Busemann point} if there exists an almost geodesic  $(x_k)$ in $X$ such that $h(z) = \lim_k d(z,x_k)- d(b,x_k)$ for all $z\in X$. The collection of all Busemann points  is denoted by $\mathcal{B}_X$.

The set of Busemann points can be equipped with a metric known as the detour distance, which was introduced in \cite{AGW}, and is defined as follows. Suppose $(X,d)$ is a proper geodesic metric space and $h,h'\in X(\infty)$ are horofunctions. Let $W_h$ be the collection of neighbourhoods of $h$ in $\overline{X}$. Then the {\em detour cost} is given by
\[
H(h,h') = \sup_{W\in W_h}\left(\inf_{x\colon \iota(x)\in W} d(b,x)  +h'(x)\right),
\]
and the {\em detour distance} is defined by
\[
\delta(h,h') = H(h,h')+H(h',h).
\]

It is known \cite{LW,Wa1} that  if $(x_k)$ is an almost geodesic  converging to a horofunction $h$, then
\begin{equation}\label{detourcost}
H(h,h') = \lim_k d(b,x_k) +h'(x_k)
\end{equation}
for all horofunctions $h'$. Moreover, on the set of Busemann points $\mathcal{B}_X$ the detour distance  is a metric where points can be at infinite distance from each other, see \cite{LW,Wa1}.

The detour distance induces a partition of $\mathcal{B}_X$ into equivalence classes, called {\em parts}, where $h$ and $h'$ in $\mathcal{B}_X$ are equivalent if  $\delta(h,h') <\infty$. In particular, if all horofunctions are Busemann points, so $\mathcal{B}_M= X(\infty)$, then $X(\infty)$ is the disjoint union of parts, each of which is a metric space under the detour distance.

The horofunction  boundary $X(\infty)$ has a natural partition  induced by the equivalence relation:
\begin{equation*}\label{sim}
h\sim h'\mbox{\quad if \quad}\sup_{x\in X} |h(x) -h'(x)|<\infty.
\end{equation*}
It was shown in \cite[Proposition 4.5]{Wa1} that two Busemann points $h$ and $h'$ in $X(\infty)$ have finite detour distance if, and only if, $h\sim h'$. Thus,  if all horofunctions are Busemann points, then the partition of $X(\infty)$ into parts coincides with the partition into equivalence classes $X(\infty)/\sim$.

We also  like to note that each (surjective) isometry $\psi\colon X\to X$ extends as a homeomorphism to $X(\infty)$ by
\[
\psi(h)(x) = h(\psi^{-1}(x)) - h(\psi^{-1}(b))\qquad (x\in X, h\in X(\infty)).
\]
It is known that on the Busemann points in $X(\infty)$ the extension $\psi$ is an isometry under the detour distance, see e.g., \cite{LW}. Also if $h\sim g$ in $X(\infty)$, then $\psi(h)\sim \psi(g)$.

\section{Jordan algebraic structures}\label{jordan}

In this section, we recall some  necessary definitions and results concerning Jordan algebraic structures associated with Hermitian symmetric spaces.
Throughout, $M$ will denote an arbitrary Hermitian symmetric space of noncompact type. To determine the horofunctions of $M$ and analyse the geometry and global topology of its metric compactification with respect to
the Carath\'eodory distance (and base point $b\in M$), we make use of the fact that $M$ can be realised as the open unit ball in a $\JB^*$-triple. Indeed, there exists a  biholomorphism,
\begin{equation}\label{psi}
\psi\colon M~~ ^{\Large \underrightarrow{\psi_1}}~ ~\mathcal{D}~~ ^{\Large \underrightarrow{\psi_2}}~~ D \subset V,
\end{equation}
onto the open unit ball $D$ of a finite dimensional $\JB^*$-triple $V$, with $\psi(b)=0$, where $\psi_1$ is the Harish-Chandra embedding and $\psi_2$ is the Kaup Riemann mapping, which is unique up to a linear isometry \cite[Theorem 4.9]{kaup}.

Since the biholomorphism $\psi$ preserves the Carath\'eodory distance, it induces a homeomorphism between  the metric compactifications of $M$ and $D$.  Hence we can, and will, work in the setting
\[
D\subset V, \mbox{ where $D$ is the open unit ball of a finite dimensional $\JB^*$-triple $V$.}
\]
The results can be transferred to the corresponding ones for $M$ via $\psi$.\smallskip

 Let us now present some of the essential background on Jordan triple systems, where we focus on the finite dimensional case.  A {\em $\JB^*$-triple} is a complex Banach space $V$ equipped with
 a continuous triple product
\[
\{\cdot,\cdot,\cdot\} \colon V\times V \times V \to V,
\]
called a {\em Jordan triple product}, which is linear and symmetric in the outer variables, and conjugate linear in the
middle variable, and  satisfies the following axioms:
\begin{enumerate}
\item[(i)]$\{a,b,\{x,y,z\}\}= \{\{a,b,x\},y,z\}-\{x, \{b,a,y\},z\} + \{x,y,\{a,b,z\}\}$, \quad (Jordan triple identity)
\item[(ii)] $a\bo a$ is Hermitian, that is,
$\|\exp it(a\bo a)\|=1$ for all $t \in \mathbb{R}$;
\item[(iii)]
$a\bo a$ has nonnegative spectrum $\sigma(a\bo a)$;
\item[(iv)] $\|a\bo a\|=\|a\|^2, \qquad $
\end{enumerate}
for $a,b,x,y,z\in V$,
where $a\bo b: V\to V$ is a bounded linear map, called a {\it box operator},
defined by
\begin{equation}\label{eq box operator}
a\bo b(x) = \{a,b,x\} \qquad (x\in V)
\end{equation}
and condition (iv) can be replaced by
\begin{equation}\label{c*-identity}
\|\{a,a,a\}\|= \|a\|^3 \qquad  (a\in V).
\end{equation}

We note that the box operator in \eqref{eq box operator} satisfies $\|a\bo b\| \leq \|a\|\|b\|$, as the triple product is a contractive mapping, that is,
\begin{equation*} \|\{a,b,c\}\|\leq \|a\| \|b\| \|c\| \quad {\rm for~all}~ a,b,c\in V.
\end{equation*}

\begin{remark}\label{norm}\rm
By definition, a Hermitian operator $T\colon V \longrightarrow V$ has
real numerical range, which is the closed convex hull of its spectrum
$\sigma(T)$  and $\|T\| = \sup\{|\lambda|\colon \lambda \in \sigma(T)\}$ \cite[pp.\,46--54]{BD}.
In particular, given $a, b$ in a $\mathrm{JB}^*$-triple,
 (i) and (ii) above implies
$$\|a\bo a\| = \sup\{\lambda\colon \lambda \in \sigma(a\bo a)\}$$
 and $a\bo b + b\bo a$ is Hermitian. Further, if
 $\|a\bo b + b \bo a\|\leq 1$, then we have $\sigma(a\bo b + b \bo a) \subset[-1,1]$.
\end{remark}

A prime example of a JB*-triple is the space of $p\times q$ complex matrices $M_{p,q}(\mathbb{C})$ with  Jordan
triple product,
\[
\{A,B,C\} = \frac{1}{2}(AB^*C + CB^*A) \qquad (A,B,C \in M_{p,q}(\mathbb{C})),
\]
which has open unit ball $D=\{A\in M_{p,q}(\mathbb{C})\colon I-AA^*\mbox{ positive definite}\}$.  In particular if $q=1$, we get the complex Euclidean space $\mathbb{C}^p$ with Jordan triple product
\[
\{x,y,z\} = \frac{1}{2}(\langle x,y\rangle z + \langle z,y\rangle x) \qquad (x,y,z \in \mathbb{C}^p)
\]
and $D =\{z\in \mathbb{C}^p\colon \langle z,z\rangle <1\}$ is the Euclidean ball.

Given $\JB^*$-triples $V_1,\ldots,V_d$,  the direct sum $V_1\oplus\cdots\oplus V_d$, with the $\ell_\infty$-norm,
	$$\| (a_1,\ldots,a_d)\|_\infty = \max\{\|a_i\|\colon i=1,\ldots,d\}, \quad (a_i\in V_i)$$
is a $\JB^*$-triple with the coordinatewise triple product.


In fact, a finite dimensional $\JB^*$-triple $V$  decomposes into a finite direct sum $V_1\oplus \cdots\oplus V_d$ of so-called Cartan factors $V_j$ ($j=1,\dots, d$) with $\ell_\infty$-norm. There are six different types of (finite dimensional) Cartan factors:

(1) $M_{p,q}(\mathbb{C})$ \qquad (2) $S_q(\mathbb{C})$ \qquad (3) $H_q(\mathbb{C})$\qquad (4) $Sp_n(\mathbb{C})$\qquad (5) $M_{1,2}(\mathcal{O})$   \qquad (6) $H_3(\mathcal{O})$,\\\noindent
where $S_q(\mathbb{C})$ and $H_q(\mathbb{C})$ are norm closed subspaces
 of $M_{p,q}(\mathbb{C})$ consisting of $p\times q$ skew-symmetric and symmetric matrices, respectively;
 and $Sp_n(\mathbb{C})$ is a spin factor of dimension $n>2$. The Cartan factors of types 5 and 6 are \emph{exceptional} Cartan factors
  (cf.\,\cite[Theorem 2.5.9]{book1}).\smallskip



There are various operators that play an important role in the theory of $\JB^*$-triples.
Besides the box operators, we will use the {\it Bergman operator}
$B(b,c)\colon V\to V$ and the {\it M\"obius transformation} $g_a\colon D\to D$, where $a\in D$
and $b,c \in V$, which are defined as follows:
\begin{equation}\label{b1}
B(b,c)(x) = x - 2(b\bo c)(x)+\{b,\{c,x,c\},b\} \qquad (x\in V),
\end{equation}
\begin{equation}\label{g2}
g_a(x) = a + B(a,a)^{1/2}(I+x\bo a)^{-1}(x) \qquad (x\in D).
\end{equation}
Here $I$ denotes the identity operator on $V$, and the inverse
$(I+x\bo a)^{-1}\colon  V \to V$ exists, as $\|x\bo a\|\leq \|x\|\|a\| <1$.

We note that $B(a,b)$ is invertible for $\|a\|\|b\|<1$. The proof of the following two identities
 can be found in \cite[Proposition 3.2.13. Lemma 3.2.17]{book1}.
\begin{equation}\label{bid2}
\|B(z,z)^{-1/2}\| = \frac{1}{1-\|z\|^2} \qquad (\|z\|<1),
\end{equation}
\begin{equation}\label{bid}
1- \|g_{-y}(z)\|^2 = \frac{1}{\|B(z,z)^{-1/2}B(z, y)B(y, y)^{-1/2}\|}
\qquad (y, z\in D).
\end{equation}
For the Euclidean ball $D\subset \mathbb{C}^d$ with inner product $\langle \cdot,\cdot\rangle$,
 we have from \cite[Example 3.2.29]{book2} the formula
\begin{equation}\label{hh}
1-\|g_{-y}(z)\|^2 = \frac{(1-\|y\|^2)(1-\|z\|^2)}{|1-\langle y,z\rangle|^2} \qquad (y,z\in D).
\end{equation}

Given $a\in V$, the {\em quadratic operator} $Q_a \colon V \to V$ is defined  by
$$Q_a(x) = \{a,x,a\} \qquad (x\in V).$$

An element $e$ in a $\JB^*$-triple $V$ is called a {\it tripotent} if $\{e,e,e\}=e$. Although $0$ is a tripotent
in a $\JB^*$-triple, we are only interested in the nonzero ones, of which the norm is always $1$.
Tripotents in C*-algebras are exactly the partial isometries.

Any tripotent $e$ in $V$ induces an eigenspace decomposition of $V$, called the {\it Peirce decomposition associated with $e$}. The eigenvalues of the box operator $e\bo e\colon V \to V$ are in
the set $\{0, 1/2,1\}$. Let
$$V_k(e) = \{x\in V\colon (e\bo e)(x)=\frac{k}{2} x\} \qquad (k=0,1,2)$$
be the corresponding eigenspaces, called the {\it Peirce $k$-space} of $e$. We have the algebraic direct sum
$$V= V_0(e) \oplus V_1(e) \oplus V_2(e).$$
where the Peirce $k$-spaces satisfy
\begin{equation}\label{eq Peirce arit 1}
	\{V_{i}(e),V_{j} (e),V_{k} (e)\}\subseteq V_{i-j+k} (e)
\end{equation} if $i-j+k$ belongs to the set  $\{ 0,1,2\},$
and $\{V_{i}(e),V_{j} (e),V_{k} (e)\}=\{0\}$ otherwise. Further, we have
\begin{equation}\label{eq Peirce arit 2} \{ V_{2} (e), V_{0}(e), V\} = \{ V_{0} (e), V_{2}(e), V\} =0.
\end{equation}

The Peirce $k$-space $V_k(e)$ is the range of the {\it Peirce $k$-projection} $P_k(e) \colon V \to V$,
which are contractive and given by
$$P_2(e) = Q_e^2, \quad P_1(e) = 2(e\bo e -Q_e^2), \quad P_0(e) = B(e,e).$$

A tripotent $e$ in a JB*-triple $V$  is called {\it minimal} if $Q_e(V)= \mathbb{C}e$,
 or equivalently, $V_2(e)=\mathbb{C}e$.  It is called {\it maximal} if $V_0(e)=\{0\}$.
In fact, the maximal tripotents in $V$ coincide with  the extreme points of the closed unit ball $V$
(cf.\,\cite[Theorem 3.2.3]{book1}).

We note that, with the inherited norm from $V$,
 the Peirce 2-space $V_2(e)$ is a  $\JB^*$-algebra with identity $e$,  Jordan product and involution
\begin{equation}\label{jb*}
x\circ y =\{x,e,y\}, \quad  x^*= \{e,x,e\}=Q_ex, \quad (x,y\in V_2(e))
\end{equation}
respectively \cite[ Example 2.4.18]{book2}. In particular, we have
\begin{equation}\label{x*norm}
\|x\|=\|x^*\|=\|Q_ex\| \qquad (x\in V_2(e)).
\end{equation}
We refer  to \cite[Definition 2.4.16]{book2} for the definition of a
JB*-algebra, which are examples of JB$^*$-triples \cite[Lemma 3.1.6]{book1}.

Let
\begin{equation}\label{ae}
A(e)= \{ x\in V_2(e)\colon x^*=x\} = \{x\in V_2(e)\colon \{e,x,e\} =x\}
\end{equation}
 be the {\it self-adjoint part} of $V_2(e)$. Then it is a closed real subalgebra of $(V_2(e), \circ)$
satisfying  $$\|a\|^2 =\|a^2\|  \leq \|a^2 + b^2\|, \qquad (a,b\in A(e))$$ where $a^2= a \circ a$, in other words,
it is a so-called {\it JB-algebra} \cite[3.1.4]{hs}.

There is a natural partial ordering $\leq$ on $A(e)$
defined by the closed cone
$$A(e)_+= \{x^2: x\in A(e)\}$$
where $x\leq y$ if and only if $y-x\in A(e)_+$.
We will make use of the fact that $\{a, A(e)_+, a\} \subset A(e)_+$ for all $a\in A(e)$, and
\begin{equation}\label{a<e}
\|a\|\leq 1 \mbox{ if and only if } -e \leq a \leq e
\end{equation}
(cf.\,\cite[Proposition 3.3.6; 3.1.5]{hs}).
An element $a\in V_2(e)$ is called {\it invertible} if there is a (unique) element $a^{-1}$,
called the {\it inverse} of $a$,
 such that
$a\circ a^{-1}=e$ and $a^2 \circ a^{-1}=a$. If $a\in A(e)$, then $a^{-1}\in A(e)$.

Given $a,b\in V$, we say that $a$ is \emph{orthogonal} to $b$ if  $a \bo b =0$. It is known that $a$ is orthogonal to $b$ if and only if $\{a,a,b\} =0$. Moreover, $a$ orthogonal to $b$ implies $b$ orthogonal to $a$, in which case we have
$$\|a+ b\|= \max\{\|a\|, \|b\|\}$$
from \cite[Corollary 3.1.21]{book1}.

A linear subspace $W \subset V$ of a  JB$^*$-triple $V$ is called a JB$^*$-subtriple if $x,y,z \in W$ implies $\{x,y,z\}\in W$, in the inherited Jordan triple product. 

The rank $r$ of a  finite dimensional JB$^*$-triple $V$  is defined by
$$r=\sup \{ \hbox{dim}V(a) \colon a\in V \},$$
where $V(a)$ denotes the smallest closed subtriple of $V$ containing $a\in V$.
It can be shown that $r$
 is the maximal number of mutually orthogonal tripotents in $V$ \cite[Example 3.3.3]{book2}.

 Let $\{e_{1},\dots,e_{n}\}$ a family  of mutually orthogonal tripotents in a JB$^*$-triple $V$. For $i,j \in \{0,1, \ldots,n\}$, the {\it joint} Peirce space $V_{ij}$ is defined by
\begin{eqnarray}\label{ij}
V_{ij}=  V_{ij}(e_{1},\dots,e_{n})  =\{z\in V\colon 2\{e_{k},e_{k},z\}=(\delta_{ik}+\delta_{jk})z\mbox{ for }k=1,\ldots,n\},
\end{eqnarray}
where $\delta_{ij}$ is the Kronecker delta and $V_{ij}=V_{ji}$.

The decomposition
\begin{eqnarray*}
V & = & \bigoplus_{0\leq i\leq j\leq n}V_{ij}
\end{eqnarray*}
is called a {\it joint} Peirce decomposition.

The Peirce multiplication rules
\begin{eqnarray*}
\{V_{ij},V_{jk},V_{k\ell}\} & \subset & V_{i\ell}\q {\rm and} \q V_{ij} \bo V_{pq} =\{0\} \q {\rm for} \q
i,j \notin \{p,q\}
\end{eqnarray*}
hold.
  The contractive projection $P_{ij}(e_{1},\ldots,e_{n})$ from $V$ onto $V_{ij}(e_{1},\ldots,e_{n})$
is called a {\it joint Peirce projection} which satisfies
\begin{equation}
 P_{ij}(e_{1},\dots,e_{n})(e_{k})= \left\{\begin{array}{ll} 0& (i \neq j)\\
                                                   \delta_{ik} e_k & (i=j).\label{pijek}
                                         \end{array}\right.
\end{equation}
We shall simplify the notation $P_{ij}(e,\ldots,e_n)$ to $P_{ij}$ if the tripotents $e_1, \ldots, e_n$
are understood.

Let $M=\{0,1,\ldots,n\}$ and $N\subset\{1,\ldots,n\}$. The Peirce $k$-spaces of the tripotent $e_{N}=\sum_{i\in N}e_{i}$
 are given by
\begin{eqnarray}
V_{2}(e_{N}) & = & \bigoplus_{i,j\in N}V_{ij},\label{Peirce-2 of sum}\\
V_{1}(e_{N}) & = & \bigoplus_{\substack{i\in N\\
j\in M\backslash N
}
}V_{ij}, \nonumber\\
V_{0}(e_{N}) & = & \bigoplus_{i,j\in M\backslash N}V_{ij}.\nonumber \label{eq:Peirce-0 of sum}
\end{eqnarray}

The Peirce projections provide a very useful formulation of the Bergman operators.
Let $e_{1},\ldots,e_{n}$
be  mutually orthogonal tripotents in a JB*-triple $V$ and
let $x=\sum_{i=1}^{n}\lambda_{i}e_{i}$ with
$\lambda_{i}\in\mathbb{C}$.
Then the Bergmann
operator $B(x,x)$ satisfies
\begin{eqnarray*}
B(x,x) & = & \sum_{0\leq i\leq j\leq n}(1-|\lambda_{i}|^{2})(1-|\lambda_{j}|^{2})P_{ij},\label{eq:Loos Bergmann}
\end{eqnarray*}
where we set $\lambda_{0}=0$ and $P_{ij}=P_{ij}(e_1, \ldots,e_n)$.  This gives the following formulae
for the square roots
\begin{eqnarray}
B(x,x)^{1/2} & = & \sum_{0\leq i\leq j\leq n}(1-|\lambda_{i}|^{2})^{1/2}(1-|\lambda_{j}|^{2})^{1/2}P_{ij}\quad (\|x\|<1), \label{eq:Bergmann Sq Rt}\\
B(x,x)^{-1/2} & = & \sum_{0\leq i\leq j\leq n}(1-|\lambda_{i}|^{2})^{-1/2}(1-|\lambda_{j}|^{2})^{-1/2}P_{ij} \quad (\|x\|<1).\label{eq:Bergmann Neg Sq Rt}
\end{eqnarray}

The following lemma will be useful later for computing the horofunctions in a noncompact Hermitian symmetric space.
 \begin{lemma}\label{hhh} Let $D$ be  the open unit ball of a JB*-triple $V$.
Given a sequence $(y_k)$ in $D$ such that $y_k\to \xi\in\partial D$, we have
\begin{equation*}\label{z}
\lim_{k}\|g_{-y_k}(z)\|=1 \qquad (z\in D).
\end{equation*}
\end{lemma}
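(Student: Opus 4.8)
The plan is to read the limit directly off the closed-form expression \eqref{bid}. Writing $T_k=B(z,z)^{-1/2}B(z,y_k)B(y_k,y_k)^{-1/2}$, formula \eqref{bid} reads $1-\|g_{-y_k}(z)\|^2=\|T_k\|^{-1}$, and since $g_{-y_k}(z)\in D$ gives $\|g_{-y_k}(z)\|\le 1$, it suffices to prove that $\|T_k\|\to\infty$.

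To produce the blow-up I would test $T_k$ on a carefully chosen unit vector. Using the spectral decomposition $y_k=\sum_i\lambda_i^{(k)}e_i^{(k)}$ into mutually orthogonal minimal tripotents with $t_k:=\|y_k\|=\lambda_1^{(k)}$ the largest coefficient, set $w_k:=e_1^{(k)}$, a norm-one tripotent. By the Bergman formula \eqref{eq:Bergmann Neg Sq Rt} together with \eqref{pijek} only the joint Peirce projection $P_{11}$ acts nontrivially on $w_k$, so $B(y_k,y_k)^{-1/2}(w_k)=(1-t_k^2)^{-1}w_k$. Putting $A_k:=B(z,z)^{-1/2}B(z,y_k)$, this yields
\[
\|T_k\|\ge\|T_k(w_k)\|=(1-t_k^2)^{-1}\,\|A_k(w_k)\|,
\]
and the prefactor $(1-t_k^2)^{-1}$ diverges because $y_k\to\xi\in\partial D$ forces $t_k\to 1$ (compare \eqref{bid2}).

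The heart of the matter, and the step I expect to be the main obstacle, is to prevent the vector factor $\|A_k(w_k)\|$ from decaying and cancelling this divergence; a priori the blow-up of $B(y_k,y_k)^{-1/2}$ could be annihilated by the remaining operators. Here I would use invertibility. Since $\|z\|<1$ and $\|y_k\|<1$ the operator $B(z,y_k)$ is invertible, hence so is $A_k$, and $\|A_k(w_k)\|\ge\|w_k\|/\|A_k^{-1}\|=1/\|A_k^{-1}\|$. As $y_k\to\xi$ with $\|\xi\|=1$, we have $\|z\|\,\|\xi\|=\|z\|<1$, so $B(z,\xi)$ is invertible by the observation that $B(a,b)$ is invertible whenever $\|a\|\,\|b\|<1$; consequently $A_k\to A:=B(z,z)^{-1/2}B(z,\xi)$, an invertible operator, and by continuity of inversion $\|A_k^{-1}\|\to\|A^{-1}\|<\infty$. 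Thus $\|A_k(w_k)\|$ stays bounded below by a positive constant for large $k$, whence $\|T_k\|\to\infty$, and \eqref{bid} gives $\|g_{-y_k}(z)\|\to 1$, as required.
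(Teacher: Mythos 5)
Your argument is sound in the finite-dimensional setting, and its skeleton is in essence the same as the paper's: both proofs use (\ref{bid}) to reduce the claim to $\|T_k\|\to\infty$, both obtain the lower bound $\|T_k\|\geq (1-\|y_k\|^2)^{-1}/\|B(z,y_k)^{-1}B(z,z)^{1/2}\|$, and both finish by noting that $B(z,y_k)^{-1}B(z,z)^{1/2}\to B(z,\xi)^{-1}B(z,z)^{1/2}$, which is legitimate because $\|z\|\,\|\xi\|=\|z\|<1$ makes $B(z,\xi)$ invertible and inversion is continuous on invertible operators. The only real difference is how that lower bound is produced: you test $T_k$ against the eigenvector $w_k=e_1^{(k)}$ supplied by a spectral decomposition of $y_k$, via (\ref{eq:Bergmann Neg Sq Rt}) and (\ref{pijek}), whereas the paper simply combines the identity (\ref{bid2}), $\|B(y_k,y_k)^{-1/2}\|=(1-\|y_k\|^2)^{-1}$, with submultiplicativity: writing $A_k=B(z,z)^{-1/2}B(z,y_k)$, one has $\|B(y_k,y_k)^{-1/2}\|=\|A_k^{-1}T_k\|\leq\|A_k^{-1}\|\,\|T_k\|$. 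Your substitution has a cost: the lemma is stated for an arbitrary JB*-triple, and in that generality an element of $D$ need not admit a decomposition into finitely many mutually orthogonal minimal tripotents; indeed minimal tripotents can fail to exist altogether (in the abelian C*-algebra $C_0(\mathbb{R})$, viewed as a JB*-triple, the only tripotent is $0$). So, as written, your proof covers only triples of finite rank --- in particular the finite-dimensional ones, which is all the paper ever uses --- but not the statement as given. The repair is immediate and is exactly the paper's move: replace the test-vector computation by the appeal to (\ref{bid2}) (which you already cite in passing) together with the submultiplicative estimate above; the rest of your argument goes through unchanged and then proves the lemma in full generality.
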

\begin{proof}
By (\ref{bid2}) and (\ref{bid}), we have
\begin{eqnarray*}
0<1- \|g_{-y_k}(z)\|^2 &=& \frac{1}{\|B(z,z)^{-1/2}B(z, y_n)B(y_k, y_k)^{-1/2}\|}\\
&\leq& \frac{\|B(z, y_k)^{-1}B(z, z)^{1/2}\|}{\|B(y_k, y_k)^{-1/2}\|}\\
&=& \|B(z, y_k)^{-1}B(z, z)^{1/2}\|(1-\|y_k\|^2) \longrightarrow 0 \qquad (z\in D)
\end{eqnarray*}
as $k \rightarrow \infty$, since $\lim_k \|B(z, y_k)^{-1}B(z, z)^{1/2}\|=
\|B(z,\xi)^{-1}B(z, z)^{1/2}\|$.
\end{proof}

For more details of JB*-triples,  we refer to \cite{book2} and the references therein.

 \section{Horofunctions of Hermitian symmetric spaces}\label{HermHor}

 Given the open unit ball $D$ in a finite dimensional $\JB^*$-triple $V$, we now determine the horofunctions of $D$ under the Carath\'eodory distance $\rho$, with the origin $0\in D$ as a base point .  Recall that the horofunctions of the corresponding Hermitian symmetric space $M$
  with base point $b\in M$ can be obtained via  $\psi$ in (\ref{psi}), with $\psi(b)=0$, where $\psi$ preserves the Carath\'eodory distance.
 In fact,  we have
$$M(\infty) = \{h\circ \psi\colon  h\in D(\infty)\}.$$

The Carath\'eodory distance $\rho$ on  $D$ is given by
$$\rho(x,y) = \sup\{ \omega(f(x),f(y))\colon  f \in H(D, \mathbb{D})\} \qquad (x,y\in D),$$
where $H(D,\mathbb{D})$ is the set of all holomorphic functions $f\colon  D\longrightarrow \mathbb{D}$
and $\omega$  is the Poincar\'e distance of the unit disc $\mathbb{D}=\{z\in \mathbb{C}\colon |z|<1\}$.
We will make use of the formula (cf.\,\cite[Theorem 3.5.9]{book2}):
$$\rho(x,y) = \tanh^{-1}\|g_{-x}(y)\|.$$

 For each $y,z\in D$ we have
 \begin{equation}\label{g}
 h_y(z)  =  \rho(z,y)-\rho(0,y) = \frac{1}{2}\log \frac{1+ \|g_{-y}(z)\|}{1-\|g_{-y}(z)\|} -
 \frac{1}{2}\log \frac{1+ \|y\|}{1-\|y\|}
= \frac{1}{2}\log \left(\frac{1- \|y\|^2}{1-\|g_{-y}(z)\|^2}\left(\frac{1+\|g_{-y}(z)\|}{1+ \|y\|}\right)^2 \right),
\end{equation}
which  can also be written as
\begin{equation}\label{gg}
h_y(z) =  \frac{1}{2}\log \left(\frac{1- \|y\|^2}{1-\|g_{-z}(y)\|^2}
\left(\frac{1+\|g_{-z}(y)\|}{1+ \|y\|}\right)^2 \right).
\end{equation}

\begin{lemma}\label{d2}
Let $D$ be the open unit in a finite dimensional $\JB^*$-triple $V$, and $\rho$ the Carath\'eodory distance on $D$.
Then  $h \in  D(\infty)$ if and only if  there exists a sequence  $(y_k)$ in $D$ with $y_k\to \xi \in\partial D$ such that
\begin{equation}\label{hd}
h(z) = \lim_k \frac{1}{2}\log \left(\frac{1- \|y_k\|^2}{1-\|g_{-y_k}(z)\|^2}\right)
= \lim_k \frac{1}{2}\log \left(\frac{1- \|y_k\|^2}{1-\|g_{-z}(y_k)\|^2}\right)
\end{equation}
for all $z\in D$.
\end{lemma}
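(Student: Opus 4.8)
The plan is to derive both implications from Rieffel's characterisation of horofunctions (Lemma \ref{Rieffel}), which applies because $(D,\rho)$ is proper and geodesic, together with the boundary estimate of Lemma \ref{hhh}. The starting observation is that $g_0 = \mathrm{id}$, so $\rho(0,y) = \tanh^{-1}\|g_{-0}(y)\| = \tanh^{-1}\|y\|$; hence for a sequence $(y_k)$ in $D$ the condition $\rho(0,y_k)\to\infty$ is equivalent to $\|y_k\|\to 1$. Since $V$ is finite dimensional, $\overline{D}$ is compact, so any sequence with $\|y_k\|\to 1$ admits a subsequence converging to some $\xi$, which necessarily lies in $\partial D$ because $\|\xi\| = \lim_k\|y_k\| = 1$.

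The computational heart of the argument is to split the internal point $h_{y_k}$ using (\ref{g}). Distributing the logarithm gives
\[
h_{y_k}(z) = \frac{1}{2}\log\frac{1-\|y_k\|^2}{1-\|g_{-y_k}(z)\|^2} + \log\frac{1+\|g_{-y_k}(z)\|}{1+\|y_k\|},
\]
and I would argue that the second summand is a negligible correction term: as $y_k\to\xi\in\partial D$ we have $\|y_k\|\to 1$, while Lemma \ref{hhh} gives $\|g_{-y_k}(z)\|\to 1$ for each fixed $z\in D$, so this summand tends to $\log 1 = 0$. Consequently, for every $z$, the limit of $h_{y_k}(z)$ exists if and only if the limit of the first summand exists, and when they exist they coincide; the latter limit is precisely the right-hand side of (\ref{hd}).

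For the forward implication, if $h\in D(\infty)$ then Lemma \ref{Rieffel} supplies a sequence $(y_k)$ with $\rho(0,y_k)\to\infty$ and $h_{y_k}\to h$ pointwise; passing to a subsequence I may assume $y_k\to\xi\in\partial D$, and the splitting above then yields the first formula in (\ref{hd}) for all $z$. For the converse, a sequence with $y_k\to\xi\in\partial D$ satisfies $\rho(0,y_k)\to\infty$, and if the displayed limit exists for every $z$ then the same splitting forces $h_{y_k}\to h$ pointwise, whence $h\in D(\infty)$ by Lemma \ref{Rieffel}. The second equality in (\ref{hd}), involving $g_{-z}(y_k)$, follows identically from (\ref{gg}) once one notes that $\|g_{-z}(y_k)\| = \|g_{-y_k}(z)\|$ by the symmetry of $\rho$ (equivalently, $g_{-z}$ extends to a self-homeomorphism of $\overline D$ carrying $\partial D$ to itself, so $g_{-z}(y_k)\to g_{-z}(\xi)\in\partial D$), so that Lemma \ref{hhh} again annihilates the corresponding correction term. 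The only delicate point is this vanishing of the correction term: it rests entirely on Lemma \ref{hhh}, without which the two logarithmic contributions could not be decoupled. Everything else—the identity $\rho(0,y)=\tanh^{-1}\|y\|$, the extraction of a convergent subsequence by compactness, and the bookkeeping between the two summands—is routine.
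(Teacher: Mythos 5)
Your proof is correct and takes essentially the same approach as the paper's: both rest on Lemma \ref{Rieffel} (valid since $(D,\rho)$ is proper and geodesic) together with Lemma \ref{hhh}, which forces the factor $\left(\frac{1+\|g_{-y_k}(z)\|}{1+\|y_k\|}\right)^2$ in (\ref{g}) to tend to $1$, and on the identity $\rho(0,y_k)=\tanh^{-1}\|y_k\|$ for the converse direction. The details you spell out explicitly---the splitting of the logarithm into the main term plus a vanishing correction, and the symmetry $\|g_{-z}(y_k)\|=\|g_{-y_k}(z)\|$ behind the second equality in (\ref{hd})---are precisely what the paper leaves implicit in citing (\ref{g}), (\ref{gg}) and Lemma \ref{hhh}.
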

\begin{proof} Suppose that $h\in D(\infty)$. Then by Lemma \ref{Rieffel} we know that there exists a sequence $(y_k)$ in $D$ with $\rho(0,y_k)\to \infty$ and $\lim_k h_{y_k}(z)$ exists for all $z\in D$.  By taking a subsequence we may assume that $y_k\to\xi\in\partial D$. The implication now follows from (\ref{g}) and Lemma \ref{hhh}. On the other hand, if  there exists a sequence  $(y_k)$ in $D$ with $y_k\to \xi \in\partial D$ such that (\ref{hd}) holds for all $z\in D$, then $\rho(0, y_k) = \tanh^{-1}\|y_k\|\to \infty$, as $\|y_k\|\to 1$. So, we deduce from (\ref{g}) and Lemma \ref{Rieffel} that $h\in D(\infty)$.
\end{proof}
\begin{remark} The notion of  a horofunction $h$ on  $D$ is essentially the same as the function $F$
 introduced in \cite[Lemma 4.1]{cr}. Indeed, the formula (\ref{hd}) in Lemma \ref{d2} for $h$ is related to $F$ by
\begin{equation*}\label{hF}
h (x) = \frac{1}{2} \log F(x) \qquad (x\in D).
\end{equation*}
\end{remark}

In case $D$ is the open Euclidean ball in  $\mathbb{C}^d$ and $y_k\to \xi$ with $\|\xi\|_2=1$, we find that
$$h(z) = \lim_k \frac{1}{2}\log \left(\frac{1- \|y_k\|^2}{1-\|g_{-y_k}(z)\|^2}\right)$$
is a horofunction, and from (\ref{hh}) we have
$$h(z) = \lim_k \frac{1}{2}\log \left(\frac{|1-\langle z, y_k\rangle|^2}{1- \|z\|^2}\right)
=\frac{1}{2}\log \left(\frac{|1-\langle z, \xi\rangle|^2}{1- \|z\|^2}\right).$$
In particular, for the disc $\mathbb{D}\subset \mathbb{C}$  we find that $h\colon \mathbb{D}\longrightarrow \mathbb{R}$ is given by the well known expression,
 $$ h(z) = \frac{1}{2}\log  \frac{|1- z\overline\xi|^2}{1-|z|^2}
=\frac{1}{2}\log  \frac{|\xi- z|^2}{1-|z|^2} \qquad (z\in \mathbb{D}).$$

We now compute the limit in (\ref{hd}) for general $D$.
By \cite[Lemma 3.2.28]{book2}, one can express the limit $h= \lim_k h_{y_k}$ in (\ref{hd})
 in terms of the Bergman operators as
\begin{equation*}\label{hd+}
h(z) = \lim_{k\rightarrow \infty} \frac{1}{2}\log (1-\|y_k\|^2)\|B(z,z)^{-1/2}B(z,y_k)B(y_k,y_k)^{-1/2}\| \qquad (z\in D).
\end{equation*}

Let $r$ be the rank of $V$. Each $y_k\in D$ has a spectral
decomposition
\begin{equation}\label{sd}
y_k = \alpha_{1k}e_{1k} + \cdots + \beta_{dk}e_{rk}
\end{equation}
where $e_{1k}, \ldots, e_{rk}$ are mutually orthogonal minimal tripotents in $V$ and
$$1>\|y_k\|= \alpha_{1k} \geq \alpha_{2k} \geq \cdots \geq \alpha_{rk} \geq 0.$$
Choosing a subsequence, we may assume for each $i$ that the sequence $(\alpha_{ik})$ converges to some $\alpha_i \in [0,1]$ and  the minimal tripotents $e_{ik}$ converge to minimal tripotent $e_i$, as the set of minimal tripotents is a closed subset of $\partial D$. Note that  $\alpha_1 = \lim_{k} \|y_k\| =1$, as $\|y_k\|\to\|\xi \| =1$.

By \cite[Lemma 5.8]{cr}, we have
$$\xi = \lim_k y_k = \alpha_1e_1 +\cdots +\alpha_r e_r$$
 and  there exists $r_0 \in \{1, \ldots, r\}$ such that
\begin{enumerate}
\item[(i)] $\alpha_s >0$ for each $1\leq s\leq r_0$,
\item[(ii)] $\alpha_s =0$ for $s>r_0$,
\item[(iii)]$\{e_1,\ldots,e_{r_0}\}$ is family of mutually orthogonal minimal tripotents.
\end{enumerate}

\begin{remark}\label{pij}\rm
The minimal tripotents $e_{1k}, \ldots, e_{rk}$ in the spectral decomposition of $y_k$ induce a joint Peirce
decomposition of $V$, with joint Peirce projections $P^k_{ij}=P_{ij}(e_{1k}, \ldots, e_{rk})$ and $0\leq i\leq j\leq r$.
By (\ref{eq:Bergmann Neg Sq Rt}), the Bergman operator $B(y_k,y_k)^{-1/2}$ is of the form
$$B(y_k,y_k)^{-1/2} = \sum_{0\leq i\leq j\leq r} (1-\alpha_{ik}^2)^{-1/2}(1-\alpha_{jk}^2)^{-1/2}P_{ij}^k \quad (\alpha_{0k}=0).$$
As $(e_{sk})_k$ converges to a minimal tripotent $e_s$ for $1\leq s\leq r_0$, with $r_0$ as above, and the $e_s$'s are pairwise orthogonal, we have the following norm convergence,
$$\lim_{k\rightarrow \infty} P_{ij}(e_{1k},\ldots,e_{r_0k}) = P_{ij}(e_1,\ldots,e_{r_0})$$
of  Peirce projections (cf.\,\cite[Remark 5.9]{cr}).

Furthermore, if $e_1,\ldots,e_m$ are mutually orthogonal tripotents and $1\leq q<m$, then
$P_{ij}(e_1,\ldots,e_q)=P_{ij}(e_{1}, \ldots, e_{m})$ for all $1\leq i\leq j\leq q$, see \cite[Lemma 2.1(i)]{cr}.
\end{remark}
We will use the observations in the previous remark to prove the following theorem.
\begin{theorem}\label{hbsd} Let $D$ be the open unit
ball of a finite dimensional $\JB^*$-triple $V$, with rank $r$. Then the horofunction functions in $D(\infty)$ are exactly the functions
 of the form
$$\label{horD}
h(z) =\frac{1}{2} \log \left\|\sum_{1\leq i\leq j\leq p}\lambda_i\lambda_jB(z,z)^{-1/2}B(z,e)P_{ij}\right\|
\qquad (z\in D),
$$
where $p\in \{1, \ldots, r\}$,  $\lambda_i\in (0,1] ~(i=1,\ldots,p)$ with $\max_i \lambda_i =1$, $e = e_1 + e_2 + \cdots +  e_{p}\in \partial D$,
and $P_{ij}\colon V \rightarrow V$ are the Peirce projections induced by the mutually orthogonal minimal tripotents $e_1, \ldots, e_{p}$.
\end{theorem}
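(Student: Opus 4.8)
The plan is to prove both inclusions by transporting the limit of Lemma~\ref{d2} through the Bergman-operator form of $h$ and analysing it with the joint Peirce calculus of Remark~\ref{pij}. For the inclusion $\subseteq$, I would start from $h\in D(\infty)$ and use Lemma~\ref{d2} together with the operator expression for $h=\lim_k h_{y_k}$ displayed just after it, namely
$$h(z)=\lim_k \tfrac12\log\big[(1-\|y_k\|^2)\,\|B(z,z)^{-1/2}B(z,y_k)B(y_k,y_k)^{-1/2}\|\big]$$
for a sequence $y_k\to\xi\in\partial D$. Writing the spectral decomposition \eqref{sd} of $y_k$ and passing to the subsequence of Remark~\ref{pij}, I set $\mu_{ik}=\big((1-\|y_k\|^2)/(1-\alpha_{ik}^2)\big)^{1/2}$, so that $\mu_{1k}=1$ and $\mu_{1k}\ge \mu_{2k}\ge\cdots$; after a further subsequence $\mu_{ik}\to\lambda_i\in[0,1]$ with $\lambda_1=1$. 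Monotonicity forces $\{i:\lambda_i>0\}=\{1,\dots,p\}$ for some $p\in\{1,\dots,r\}$, and these are the weights $\lambda_i$ of the statement.

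The core step is an operator-norm computation. Using \eqref{eq:Bergmann Neg Sq Rt} I would rewrite $(1-\|y_k\|^2)B(y_k,y_k)^{-1/2}=\sum_{0\le i\le j\le r}\mu_{ik}\mu_{jk}P^k_{ij}$, where $P^k_{ij}=P_{ij}(e_{1k},\dots,e_{rk})$. Every term with an index equal to $0$ or exceeding $p$ has $\mu_{ik}\mu_{jk}\to 0$ while $P^k_{ij}$ remains contractive, so it vanishes in operator norm; the surviving projections $P^k_{ij}$ with $1\le i\le j\le p$ converge in norm to $P_{ij}$ by Remark~\ref{pij}. Multiplying on the left by $B(z,z)^{-1/2}B(z,y_k)\to B(z,z)^{-1/2}B(z,\xi)$, which follows from continuity of the Bergman operator \eqref{b1} and $y_k\to\xi$, yields operator-norm convergence of the whole product to $\sum_{1\le i\le j\le p}\lambda_i\lambda_j B(z,z)^{-1/2}B(z,\xi)P_{ij}$. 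Continuity of the norm and of $\log$ then gives $h(z)$ as one half the logarithm of the norm of this operator.

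To replace $\xi$ by $e$ I would establish the identity $B(z,\xi)P_{ij}=B(z,e)P_{ij}$ for $1\le i\le j\le p$. Since $\lambda_i>0$ forces $\alpha_i=1$, I may write $\xi=e+\eta$ with $\eta=\sum_{s>p}\alpha_s e_s$, and each $e_s$ with $s>p$ lies in $V_0(e)$, so $\eta\in V_0(e)$; moreover $\mathrm{ran}\,P_{ij}\subseteq V_2(e)$ for $1\le i\le j\le p$ by \eqref{Peirce-2 of sum}. Expanding $B(z,e+\eta)-B(z,e)$ via \eqref{b1} on a vector $w\in V_2(e)$ produces only terms of the form $\{z,\eta,w\}$, $\{z,\{e,w,\eta\},z\}$ and $\{z,\{\eta,w,\eta\},z\}$, whose inner triple products are of the shape $\{V_2(e),V_0(e),V\}$ and hence vanish by \eqref{eq Peirce arit 2}. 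Thus the two Bergman operators agree on $\mathrm{ran}\,P_{ij}$, and $h$ has exactly the claimed form.

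For the reverse inclusion I would reverse-engineer a realising sequence. After relabelling so that $\lambda_1=1\ge\lambda_2\ge\cdots\ge\lambda_p>0$, choose $\alpha_{1k}\in[0,1)$ with $\alpha_{1k}\to 1$ and set $y_k=\sum_{i=1}^p\alpha_{ik}e_i$ with $1-\alpha_{ik}^2=\lambda_i^{-2}(1-\alpha_{1k}^2)$; then $\|y_k\|=\alpha_{1k}$, the tripotents and the projections $P^k_{ij}=P_{ij}$ are constant in $k$, $\xi=e$ (so the $\eta$-term is absent), and the computation above with $\mu_{ik}\to\lambda_i$ shows $h_{y_k}\to h$. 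Since $\rho(0,y_k)=\tanh^{-1}\|y_k\|\to\infty$, Lemma~\ref{d2} gives $h\in D(\infty)$. I expect the main obstacle to be the two interlocking Jordan-theoretic points of the middle paragraphs: controlling the operator-norm limit so as to isolate precisely which Peirce blocks $V_{ij}$ survive and with which weight $\lambda_i\lambda_j$, and the identity $B(z,\xi)P_{ij}=B(z,e)P_{ij}$, which is exactly what allows the spectral weights $\alpha_i$ to be absorbed into the $\lambda_i$ and the base tripotent $e$.
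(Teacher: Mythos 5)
Your proposal is correct and follows essentially the same route as the paper's proof: Lemma~\ref{d2} plus the Bergman-operator expression for $h_{y_k}$, spectral decompositions with subsequences and the Peirce-projection convergence of Remark~\ref{pij} to isolate the surviving blocks $1\le i\le j\le p$ with weights $\lambda_i\lambda_j$, the identity $B(z,\xi)P_{ij}=B(z,e)P_{ij}$, and an explicit sequence in the flat $\mathbb{R}e_1\oplus\cdots\oplus\mathbb{R}e_p$ for the converse. The only (harmless) cosmetic differences are that you define the weights directly as square roots $\mu_{ik}=\bigl((1-\|y_k\|^2)/(1-\alpha_{ik}^2)\bigr)^{1/2}$, you bypass the paper's intermediate reduction to $r_0$, and you verify $B(z,\xi)P_{ij}=B(z,e)P_{ij}$ via $\eta\in V_0(e)$ and the rule \eqref{eq Peirce arit 2} rather than via joint Peirce multiplication rules as the paper does.
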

\begin{proof}
Suppose that $h$ is a horofunction. Then by Lemma \ref{d2} there exists a sequence $(y_k)$ in $D$ converging to $\xi\in\partial D$ such that $h(z)= \lim_{k\rightarrow \infty} h_{y_k}(z)$ for all $z\in D$.

Let $$y_k = \alpha_{1k}e_{1k} + \cdots + \alpha_{rk}e_{rk}$$ be the spectral decomposition. From Remark \ref{pij}  we know
there is an $r_0\in \{1,\ldots,r\}$ such that
$$\xi = \lim_k y_k = \alpha_1e_1 +\cdots +\alpha_r e_r$$
with $\alpha_1=1$, $\alpha_s>0$ for $1\leq s\leq r_0$ and $\alpha_s=0$ for $s>r_0$.
Moreover,
$$B(y_k,y_k)^{-1/2} = \sum_{0\leq i\leq j\leq r} (1-\alpha_{ik}^2)^{-1/2}(1-\alpha_{jk}^2)^{-1/2}P_{ij}^k \quad (\alpha_{_{0k}}=0).$$

Since $0<{1-\alpha_{1k}^2}\leq 1-\alpha_{ik}^2$ for $ i \in \{1, \ldots,r\}$, we may assume, by choosing subsequence if necessary,
for each $i$ that
$$\frac{1-\alpha_{1k}^2}{1-\alpha_{ik}^2} ~ {\rm converges~ to~ some~} \lambda_i \in [0,1].$$
Note that  $\lambda_1 =1$ and  $\lambda_i =0$ for $i >r_0$.  Combining this with Remark \ref{pij} we get that
\begin{eqnarray*}
\lim_{k\rightarrow \infty} (1-\|y_k\|^2)B(y_k, y_k)^{-1/2} &= &
\lim_{k\rightarrow \infty} \sum_{0\leq i\leq j\leq r}
\sqrt{\frac{1-\alpha_{1k}^2}{1-\alpha_{ik}^2}}
\sqrt{\frac{1-\alpha_{1k}^2}{1-\alpha_{jk}^2}} \,P_{ij}^k\\
&=& \lim_{k\rightarrow \infty} \sum_{0\leq i\leq j\leq r_0}
\sqrt{\frac{1-\alpha_{1k}^2}{1-\alpha_{ik}^2}}
\sqrt{\frac{1-\alpha_{1k}^2}{1-\alpha_{jk}^2}}\, P_{ij}(e_{1k},\ldots,e_{r_0k}) \\
&=&  \sum_{0\leq i\leq j\leq r_0}
\lambda_i \lambda_j P_{ij}
\end{eqnarray*}
where $P_{ij}$ are the Peirce  projections induced by the orthogonal minimal tripotents
$e_1, \ldots, e_{r_0}$. So,
\begin{eqnarray}\label{1st}
 h(z)& = &\lim_{k\rightarrow \infty} \frac{1}{2}\log (1-\|y_k\|^2)\|B(z,z)^{-1/2}B(z,y_k)B(y_k,y_k)^{-1/2}\| \nonumber\\
&= &\frac{1}{2} \log \left\|\sum_{1\leq i\leq j\leq r_0}\lambda_i\lambda_jB(z,z)^{-1/2}B(z,\xi)P_{ij}(e_1, \ldots, e_{r_0})\right\|
\qquad (z\in D).
\end{eqnarray}

Let $p\in\{1,\ldots,r_0\}$ be such  that  $\alpha_i=1$ for $i\leq p$, and $\alpha_i<1$ otherwise.  Since  $\lambda_i=0$ when $\alpha_i <1$, the horofunction $h$ in (\ref{1st}) reduces to
\begin{equation}\label{2nd}
h(z) =\frac{1}{2} \log \left\|\sum_{1\leq i\leq j\leq p}\lambda_i\lambda_jB(z,z)^{-1/2}B(z,\xi)P_{ij}(e_1,\ldots, e_{p})\right\|,
\end{equation}
as $P_{ij}(e_1,\ldots, e_{p}) = P_{ij}(e_1,\ldots,e_{r_0})$ for $1\leq i\leq j\leq p$ by \cite[Lemma 2.1]{cr}.

Let $e = e_1+\cdots +e_{p}$. For  $k\not\in \{i,j\}$ and $w\in V_{ij}(e_1, \ldots, e_{r_0})$ we have that $(e_k\bo w)( V) = \{0\}$  by the Peirce multiplication rules, as $e_k \in V_{kk}(e_1, \ldots, e_{r_0})$.
Therefore, for $1\leq i\leq j\leq p$,
\begin{equation*}
\xi \bo P_{ij}(e_1,\ldots,e_{p})(\cdot) =  \xi\bo P_{ij}(e_1, \ldots, e_{r_0})(\cdot)=
e \bo P_{ij}(e_1, \ldots, e_{r_0})(\cdot)= e \bo P_{ij}(e_1,\ldots,e_{p})(\cdot),
\end{equation*}
and likewise
\begin{eqnarray*}
Q_{\xi}P_{ij}(e_1,\dots,e_{p})(\cdot)  &= & \{e_1 + \alpha_2 e_2 + \cdots + \alpha_{r_0}e_{r_0},\, P_{ij}(e_1,\ldots,e_{p})(\cdot),\,
e_1 + \alpha_2 e_2 + \cdots + \alpha_{r_0}e_{r_0}\}\\
&=& \{ e, P_{ij}(e_1,\ldots,e_{p})(\cdot),e\}.
\end{eqnarray*}
Thus, $Q_{\xi}P_{ij}(e_1,\dots,e_{p}) = Q_{e}P_{ij}(e_1,\dots,e_{p}) $.
It now follows for $1\leq i\leq j\leq p$ that
\begin{equation*}
B(z,\xi)P_{ij}(e_1,\ldots,e_{p}) = P_{ij}(e_1,\ldots,e_{p}) - 2(z \bo e)P_{ij}(e_1,\ldots,e_{p}) + Q_zQ_{e}P_{ij}(e_1,\ldots,e_{p})\\
= B(z,e)P_{ij}(e_1,\ldots,e_{p}).
\end{equation*}
Thus, the horofunction $h$ in (\ref{2nd}) can be expressed as
$$h(z) = \frac{1}{2} \log \left\|\sum_{1\leq i\leq j\leq p}\lambda_i\lambda_jB(z,z)^{-1/2}B(z,e)P_{ij}(e_1,\ldots,e_{p})\right\|
\qquad (z\in D).$$

To prove that each function of the form (\ref{horD}) is a horofunction we let
$$h(z)= \frac{1}{2} \log \left\|\sum_{1\leq i\leq j\leq p}\lambda_i\lambda_jB(z,z)^{-1/2}B(z,e)P_{ij}\right\| \qquad (z\in D)$$
where $p\in \{1, \ldots, r\}$,  $\lambda_i\in (0,1] ~(i=1,\ldots,p)$ with $\max_i \lambda_i =1$, and $e= e_1 + e_2 + \cdots +  e_{p}\in \partial D$,
with Peirce projections $P_{ij}\colon V \rightarrow V$  induced by the mutually orthogonal minimal tripotents
$e_1, \ldots, e_{p}$.

For all $k \in \mathbb{N}$ sufficiently large (depends on $\min_i \lambda_i$) we can define
$$\alpha_{ik} = \sqrt{1- \frac{2k-1}{k^2\lambda_i^2}}$$  for $1\leq i\leq p$. For those $k$ set
$y_k = (1- 1/k)e_1 + \alpha_{2k}e_2 + \cdots + \alpha_{pk}e_{p}$ and note that $y_k\in D$.

Then the sequence $(y_k)$ norm converges to $e$ and
\begin{equation*}
\lim_{k\rightarrow \infty} (1-\|y_k\|^2)B(y_k, y_k)^{-1/2}  =
\lim_{k\rightarrow \infty} \sum_{0\leq i\leq j\leq p}
\sqrt{\frac{1-(1- 1/k)^2}{1-\alpha_{ik}^2}}
\sqrt{\frac{1-(1-1/k)^2}{1-\alpha_{jk}^2}} \,P_{ij}
=  \sum_{0\leq i\leq j\leq p} \lambda_i \lambda_j P_{ij}.
\end{equation*}
Hence
\begin{eqnarray*}
\lim_{k\rightarrow \infty} h_{y_k}(z) &=& \lim_{k\rightarrow \infty}   \frac{1}{2}\log \left(\frac{1- \|y_k\|^2}{1-\|g_{-y_k}(z)\|^2}\right)\\
&=& \lim_{k\rightarrow \infty}   \frac{1}{2}\log (1-\|y_k\|^2)\|B(z,z)^{-1/2}B(z,y_k)B(y_k,y_k)^{-1/2}\| \\
&=& \frac{1}{2} \log \left\|\sum_{1\leq i\leq j\leq p}\lambda_i\lambda_jB(z,z)^{-1/2}B(z,e)P_{ij}\right\|,
\end{eqnarray*}
which completes the proof.
\end{proof}

We see from the proof of Theorem \ref{hbsd} that it can happen that two sequence $(y_k)$ and $(z_k)$ in $D$ converging to distinct points in $\partial D$ can give the same horofunction. Indeed, if we let $y_k$ be as in the proof  Theorem \ref{hbsd}  and  set $z_k = y_k +(1-1/\sqrt{k})(e_{p+1} +\cdots+e_r)$, then both $h_{y_k}$ and $h_{z_k}$ converge to $h$ given by (\ref{horD}).

We also note that the horofunction $h$ given by (\ref{horD}) can be obtained by taking the limit of an appropriate sequence in the flat
$\mathbb{R}e_1\oplus \cdots\oplus \mathbb{R}e_p$.  This is consistent with the observation in \cite[Lemma 4.4]{HSWW}.  Later in Lemma \ref{hisBus} we shall show that one can obtain the horofunctions in $D(\infty)$ by taking  limits along  geodesics in the flats.

\section{Horofunctions of finite dimensional $\JB^*$-triples}\label{TripleHor}

We  now determine the horofunctions of  finite dimensional $\JB^*$-triples $(V,\|\cdot\|)$ as normed spaces, with base point $0$.
Throughout  we let $r$ be the rank of $V$. As in (\ref{sd}),  each element $a\in V$ has a spectral decomposisiton,
$$a = \lambda_1 e_1 +  \lambda_2 e_2+ \cdots + \lambda_r e_r, \quad
(\|a\|= \lambda_1 \geq \lambda_2 \geq \cdots \geq \lambda_r \geq 0),$$
where $e_1, \ldots, e_r$ are mutually orthogonal minimal tripotents in $V$.

Given a sequence $(a_k)$ in $V$ with $h_{a_k}\to h\in V(\infty)$, we have  $r_k=\|a_k\|\to \infty$ (by Lemma \ref{Rieffel}) and
\[
h_{a_k}(x) =\|x-a_k\| - \|a_k\|= \frac{\|x-a_k\|^2-\|a_k\|^2}{\|x-a_k\|+\|a_k\|} =  \frac{(2r_k)^{-1}(\|(x-a_k)\bo (x-a_k)\|-r_k^2)}{2^{-1}(\|r_k^{-1}(x-a_k)\|+1)}.
\]
As the denominator goes to $1$ when $k\to\infty$, we need to analyse
\[
\lim_{k\to\infty} (2r_k)^{-1}(\|(x-a_k)\bo (x-a_k)\|-r_k^2).
\]
First we note that,  for each  $y\in V$,  the spectrum $\sigma(y\bo y)$ is the set of
eigenvalues in $[0, \infty)$ since $\dim V <\infty$, and by Remark \ref{norm},  $\|y\bo y\| =
\sup \sigma(y\bo y).$
On the other hand, $V$ is a finite dimensional Hilbert space with inner-product
 \begin{equation}\label{<>}
 \langle x,y \rangle =\mathrm{Tr}(x\bo y) \qquad (x,y\in V).
 \end{equation}
For each self-adjoint operator $T$ on $(V, \langle \cdot, \cdot\rangle)$, we use the notation
$$\Lambda(T)=\sup \{ \langle Tz, z\rangle \colon z\in V, \langle z,z\rangle =1\}$$
to denote the maximum eigenvalue of $T$.  In particular, we have
\[
\Lambda(y\bo y) =\sup \sigma(y\bo y),
\]
as $y\bo y$ is a positive self-adjoint operator on the Hilbert space $V$ (cf.\,\cite[Lemma 1.2.22]{book1}).

Since $h_{a_k}(x) =\|x-a_k\|-\|a_k\|\geq -\|x\|$ for all $x\in V$, we have
\begin{equation}\label{lowerbound}
(2r_k)^{-1}(\|(x-a_k)\bo (x-a_k)\|-r_k^2)\geq -2\|x\|
\end{equation}
for sufficiently large $k$ , which will be useful later.

 The start we prove the following technical lemma.
 \begin{lemma}\label{techlem}
 Let $(a_k)$ be a sequence in $V$ such that $r_k=\|a_k\|\to\infty$. Let $a_k = \sum_{i=1}^r \lambda_{ik}e_{ik}$ be a spectral decomposition of $a_k$, with $\lambda_{1k}\geq \lambda_{2k}\geq \ldots\geq \lambda_{rk}\geq 0$ and mutually orthogonal minimal tripotents $e_{1k}, \ldots, e_{rk}$.

If $\alpha_{ik}= r_k-\lambda_{ik}\to \alpha_i\in [0,\infty]$ and $e_{ik}\to e_i$ for all $i$, then
\begin{eqnarray}\label{techeqn}
\lefteqn{\lim_{k\to\infty} (2r_k)^{-1}(\|(x-a_k)\bo (x-a_k)\|-r_k^2)} \nonumber\\
&  & = \sup_{u\in V_2(e_I)\colon \langle u,u\rangle =1}
\langle (-\frac{1}{2}(e_I\bo x+ x\bo e_I)-\sum_{i\in I}\alpha_i (e_i\bo e_i))u,u\rangle,
\end{eqnarray}
where  $I =\{i\colon \alpha_i<\infty\}$ and $e_I =\sum_{i\in I} e_i$.
 \end{lemma}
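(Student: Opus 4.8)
The plan is to expand the box operator in \eqref{techeqn} and reduce everything to the largest eigenvalue of an explicit family of self-adjoint operators on the Hilbert space $(V,\langle\cdot,\cdot\rangle)$. Writing $w_k = x-a_k$ and using linearity in the first variable and additivity in the second, $w_k\bo w_k = x\bo x - x\bo a_k - a_k\bo x + a_k\bo a_k$. Since $\|y\bo y\|=\Lambda(y\bo y)$, the left-hand side of \eqref{techeqn} equals $\lim_k\Lambda(T_k)$ with $T_k=(2r_k)^{-1}(w_k\bo w_k-r_k^2 I)$, and the target is the largest eigenvalue of the compression to $V_2(e_I)$ of
\[
S = -\tfrac12(e_I\bo x + x\bo e_I) - \sum_{i\in I}\alpha_i\,(e_i\bo e_i).
\]
I would split $T_k = D_k + R_k$, where $D_k=(2r_k)^{-1}(a_k\bo a_k - r_k^2 I)$ and $R_k=(2r_k)^{-1}(x\bo x - x\bo a_k - a_k\bo x)$. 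As $a_k=\sum_i\lambda_{ik}e_{ik}$ with the $e_{ik}$ mutually orthogonal minimal tripotents, $a_k\bo a_k=\sum_i\lambda_{ik}^2(e_{ik}\bo e_{ik})$ acts on each joint Peirce space $V_{ij}^k=V_{ij}(e_{1k},\dots,e_{rk})$ as the scalar $\tfrac12(\lambda_{ik}^2+\lambda_{jk}^2)$; hence $D_k$ is block-diagonal and acts on $V_{ij}^k$ by $(2r_k)^{-1}\bigl(\tfrac12(\lambda_{ik}^2+\lambda_{jk}^2)-r_k^2\bigr)$. Using $\lambda_{ik}^2-r_k^2=-\alpha_{ik}(\lambda_{ik}+r_k)$ and $\lambda_{ik}/r_k\to1$ for $i\in I$, this scalar tends to $-\tfrac12(\alpha_i+\alpha_j)$ when $i,j\in I$ and to $-\infty$ whenever $i\notin I$ or $j\notin I$, while $\|R_k\|\le(2r_k)^{-1}(\|x\|^2+2\|x\|r_k)$ stays bounded.

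For the lower bound I would test on vectors supported in $V_2(e_I^k)$, where $e_I^k=\sum_{i\in I}e_{ik}$. Given a unit $u\in V_2(e_I)$, set $u_k=P_2(e_I^k)u/\|P_2(e_I^k)u\|$; since $P_{ij}^k\to P_{ij}$ for $i,j\in I$ (Remark~\ref{pij}), we have $u_k\to u$ with $u_k\in V_2(e_I^k)$. Then $\langle D_k u_k,u_k\rangle\to\langle-\sum_{i\in I}\alpha_i(e_i\bo e_i)u,u\rangle$, and $\langle R_k u_k,u_k\rangle\to-\re\langle(x\bo e_I)u,u\rangle=\langle-\tfrac12(e_I\bo x+x\bo e_I)u,u\rangle$. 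The crucial algebraic input is that $(x\bo e_i)u=\{x,e_i,u\}=0$ for $u\in V_2(e_I)$ and $i\notin I$: indeed $e_i\in V_{ii}$, and the Peirce multiplication rules force a nonzero $\{x_{pq},e_i,u_{ab}\}$ to have $i\in\{a,b\}$, impossible since $u\in\bigoplus_{a,b\in I}V_{ab}$ and $i\notin I$. This annihilates the indices $i\notin I$ in $(2r_k)^{-1}x\bo a_k=\sum_i(\lambda_{ik}/2r_k)(x\bo e_{ik})$, leaving exactly $e_I$ in the limit. Thus $\langle T_k u_k,u_k\rangle\to\langle Su,u\rangle$, and taking the supremum over $u$ gives $\liminf_k\Lambda(T_k)\ge\sup_{u\in V_2(e_I),\,\langle u,u\rangle=1}\langle Su,u\rangle$, the right-hand side of \eqref{techeqn}.

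The main obstacle is the matching upper bound, because on the Peirce blocks indexed outside $I$ the operator norm of $T_k$ blows up, so a maximiser could a priori exploit large couplings into those blocks. I would first record the two-sided bound $-2\|x\|\le\Lambda(T_k)\le\|x\|+o(1)$, the lower estimate being \eqref{lowerbound} and the upper one following from $\|w_k\bo w_k\|=\|x-a_k\|^2\le(\|x\|+r_k)^2$. Let $v_k$ be unit eigenvectors realising $\Lambda(T_k)$, and split $v_k=v_k'+v_k''$ along the orthogonal decomposition $V=V_2(e_I^k)\oplus W_k$ with $W_k=V_1(e_I^k)\oplus V_0(e_I^k)$. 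Since $D_k\le0$ and, on $W_k$, is bounded above by some $\mu_k\to-\infty$, the boundedness of $\Lambda(T_k)$ gives $\mu_k\|v_k''\|^2\ge\Lambda(T_k)-\|R_k\|\ge -C$, forcing $\|v_k''\|\to0$. The block-diagonality of $D_k$ makes $\langle D_k v_k',v_k''\rangle=0$, and the remaining cross- and $W_k$-terms are controlled by $\|R_k\|\,\|v_k''\|\to0$, so $\Lambda(T_k)=\langle T_k v_k',v_k'\rangle+o(1)$. Finally, the operator convergence $P_2(e_I^k)\,T_k\,P_2(e_I^k)\to P_2(e_I)\,S\,P_2(e_I)$ on $V_2(e_I)$—again via $P_{ij}^k\to P_{ij}$, the vanishing identity above, and $(2r_k)^{-1}x\bo x\to0$—shows that the largest eigenvalue of the compression of $T_k$ to $V_2(e_I^k)$ converges to the right-hand side, whence $\limsup_k\Lambda(T_k)\le$ the right-hand side. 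Combining the two bounds yields \eqref{techeqn}.
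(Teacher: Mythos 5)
Your proof is correct and takes essentially the same route as the paper's: both identify the limit with the top eigenvalue of the self-adjoint operators $T_k$ on $(V,\langle\cdot,\cdot\rangle)$, exploit the joint Peirce block structure of $a_k\bo a_k$ (coefficients tending to $-\tfrac{1}{2}(\alpha_i+\alpha_j)$ on blocks inside $I$ and to $-\infty$ on all others) together with the a priori bound \eqref{lowerbound} to force maximizing vectors to concentrate in $V_2(e_I^k)$, and prove the reverse inequality by testing against normalized $P_2(e_I^k)$-projections of unit vectors of $V_2(e_I)$, using the norm convergence of Peirce projections and the vanishing identity $\{x,e_i,u\}=0$ for $u\in V_2(e_I)$, $i\notin I$. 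The only overstatement is $\Lambda(T_k)=\langle T_k v_k',v_k'\rangle+o(1)$: your estimates directly justify only $\Lambda(T_k)\le\langle T_k v_k',v_k'\rangle+o(1)$, since $\langle D_k v_k'',v_k''\rangle$ is merely nonpositive rather than $o(1)$ without a further maximality argument, but this inequality is all that your upper bound actually uses, so the argument stands.
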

 \begin{proof}
 We will show that every subsequence of $((2r_k)^{-1}(\|(x-a_k)\bo (x-a_k)\|-r_k^2))_k$ has a convergent subsequence whose limit is the right-hand side of (\ref{techeqn}). Let $((2r_{m})^{-1}(\|(x-a_{m})\bo (x-a_{m})\|-r_{m}^2))_m$ be a subsequence. Note that
 \[
 (2r_{m})^{-1}(\|(x-a_{m})\bo (x-a_{m})\|-r_{m}^2)
  = \Lambda\left(\frac{1}{2r_{m}}x\bo x - \frac{1}{2}(\frac{a_{m}}{r_{m}}\bo x+ x\bo \frac{a_{m}}{r_{m}}) +\frac{1}{2r_{m}}(a_{m}\bo a_{m} -r^2_{m}I)\right),
 \]
where $I\colon V\to V$ is the identity operator.

There exists $w^m\in V$ with $\langle w^m,w^m\rangle =1$ such that
 \begin{multline*}
\Lambda\left(\frac{x\bo x}{2r_{m}} - \frac{1}{2}(\frac{a_{m}}{r_{m}}\bo x+ x\bo \frac{a_{m}}{r_{m}}) +\frac{1}{2r_{m}}(a_{m}\bo a_{m} -r^2_{m}I)\right)
  \\= \left\langle \left(\frac{x\bo x}{2r_{m}}- \frac{1}{2}(\frac{a_{m}}{r_{m}}\bo x+ x\bo \frac{a_{m}}{r_{m}}) +\frac{1}{2r_{m}}(a_{m}\bo a_{m} -r^2_{m}I)\right)w^m,\,w^m\right\rangle.
 \end{multline*}

 After taking a further subsequence, we may assume that $w^m\to w$ and $\lambda_{im}/r_{m}\to\mu_i\in [0,1]$ for all $i$.  So, $a_{m}/r_{m}\to a=\sum_{i=1}^r \mu_i e_i$. Also note that $\mu_i = 1$ for all $i\in I$, as
 \[
 \mu_i =\lim_{m} \frac{\lambda_{im}}{r_{m}} = \lim_{m} \frac{r_{m}-\alpha_{im}}{r_{m}} = 1 \mbox{\qquad for all $i\in I$.}
 \]

 Consider the Peirce decomposition $V =\bigoplus_{0\leq s\leq t\leq r} V_{st}^m$ with respect to $e_{1m},\ldots,e_{rm}$
 and write
$$w^m = \sum_{0\leq s\leq t\leq r} w^m_{st}.$$
 Set $\lambda_{0m}=0$ and $\alpha_{0m} =r_{m}$ for all $m$.
From (\ref{ij}), we have
 \begin{equation*}\label{cases}
 (e_{im}\bo e_{im}) w_{st}^m = \left\{\begin{array}{ll} w_{st}^m & (i=s=t)\\
 \frac{1}{2}w_{st}^m & (i=s\neq t\, {\rm or}\, i=t\neq s)\\
  0 &\mbox{ otherwise.}
  \end{array}\right.
 \end{equation*}

Therefore
 \begin{align}
 \langle \frac{1}{2r_{m}}(a_{m}\bo a_{m} -r^2_{m}I)w^m, \, w^m\rangle &   =
 \langle \sum_{i=1}^r \frac{1}{2r_{m}}(\lambda_{im}^2(e_{im}\bo e_{im})w_m -r^2_{m}w^m),\,w^m\rangle  \notag\\
 &   =
\sum_{0\leq s\leq r} \frac{\lambda_{sm}^2-r_{m}^2}{2r_{m}}\langle w^m_{ss},\,w^m_{ss}\rangle +
\sum_{0\leq s<t\leq r} \frac{(\lambda_{sm}^2+\lambda_{tm}^2)/2-r_{m}^2}{2r_{m}}\langle w^m_{st},\,w^m_{st}\rangle \notag \\
 &  = -\sum_{0\leq s\leq r} \frac{\alpha_{sm}(2r_{m}-\alpha_{sm})}{2r_{m}}\langle w^m_{ss},w^m_{ss}\rangle \notag \\
 & \qquad
-\sum_{0\leq s<t\leq r} \Big( \frac{\alpha_{sm}(2r_{m}-\alpha_{sm})}{4r_{m}}+ \frac{\alpha_{tm}(2r_{m}-\alpha_{tm})}{4r_{m}}\Big)\langle w^m_{st},w^m_{st}\rangle. \label{dotprod}
 \end{align}

Note that, as the of set minimal tripotents is compact and $e_{1k}, \ldots,e_{rk}$ are mutually othogonal tripotents, $e_1,\ldots,e_r$ are mutually orthogonal minimal tripotents. Let $V = \bigoplus_{0\leq s\leq t\leq r} V_{st}$ and  $w = \sum_{0\leq s\leq t\leq r} w_{st}$
be the Peirce decompositions with respect to $e_1,\ldots,e_r$.

 Then  $w_{st}=0$ if $\{s,t\}\not\subset I$. Indeed, $w^m_{st}\to w_{st}$ for all $0\leq s\leq t\leq r$,
 and if $w_{st}\neq 0$ for some $\{s,t\}\not\subset I$, then the right-hand side of (\ref{dotprod}) goes to $-\infty$ as $m\to\infty$, since
 \[
\frac{ \alpha_{sm}(2r_{m}-\alpha_{sm})}{2r_{m}}\geq \frac{ \alpha_{sm}}{2}\to\infty \mbox{\quad or\quad  }  \frac{\alpha_{tm}(2r_{m}-\alpha_{tm})}{2r_{m}}\geq \frac{\alpha_{tm}}{2}\to\infty.
 \]
 As $\langle \frac{x\bo x}{2r_{m}}w^m,w^m\rangle \to 0$ and
 \[
  \langle - \frac{1}{2}(\frac{a_{m}}{r_{m}}\bo x+ x\bo \frac{a_{m}}{r_{m}})w^m,w^m\rangle \to   \langle- \frac{1}{2}(a\bo x+ x\bo a)w,w\rangle,
 \]
 we find that $(2r_k)^{-1}(\|(x-a_k)\bo (x-a_k)\|-r_k^2)\to-\infty$, which contradicts (\ref{lowerbound}).  Hence
 \[
 w\in \bigoplus_{s,t\in I\colon s\leq t} V_{st} = V_2(e_I).
 \]

 Now using the Peirce decomposition $V = V_2(e_I)\oplus V_1(e_I)\oplus V_0(e_I)$ with respect to the tripotent $e_I$ and the
 Peirce multiplication rules,
  we find for each $z\in V_2(e_I)$ (and in particular for $w$) that
 \[
\langle (a\bo x)z,z\rangle  = \langle \{e_I, x,z\}+ \{\sum_{i\not\in I} \mu_i e_I, x,z\},z\rangle = \langle(e_I\bo x)z,z\rangle
 \]
and
 \[
\langle(x\bo a)z,z\rangle = \langle\{x,e_I,z\}+ \{x,\sum_{i\not\in I} \mu_i e_I,z\},z\rangle = \langle(x\bo e_I)z,z\rangle.
 \]

Hence we deduce from (\ref{dotprod}) that
 \begin{align}
 \limsup_{m\to\infty}\, (2r_{m})^{-1}(\|(x-a_m)\bo (x-a_m)\|-r_{m}^2) & \leq  \langle- \frac{1}{2}(a\bo x+ x\bo a)w,w\rangle - \sum_{s\in I}\alpha_s\langle w_{ss},w_{ss}\rangle \notag\\
 & \qquad
 - \sum_{s,t\in I\colon s<t} \frac{1}{2}(\alpha_s+\alpha_t) \langle w_{st},w_{st}\rangle \notag\\
 &  = \langle- \frac{1}{2}(e_I\bo x+ x\bo e_I)w,w\rangle - \sum_{s\in I}\alpha_s\langle (e_s\bo e_s)w,w\rangle.\label{55}
 \end{align}

Next we show
 \begin{equation}\label{liminf}
\liminf_{m\to\infty}\, (2r_{m})^{-1}(\|(x-a_m)\bo (x-a_m)\|-r_{m}^2)  \geq   \langle- \frac{1}{2}(e_I\bo x+ x\bo e_I)w,w\rangle - \sum_{s\in I}\alpha_s\langle (e_s\bo e_s)w,w\rangle.
  \end{equation}

Let $e_I^m=\sum_{i\in I} e_{im}$ and set $u^m = P_2(e_I^m)w^m$. Then $u^m\to P_2(e_I)w = w$  and
 \begin{multline*}
\Lambda\left(\frac{x\bo x}{2r_{m}} - \frac{1}{2}(\frac{a_{m}}{r_{m}}\bo x+ x\bo \frac{a_{m}}{r_{m}}) +\frac{1}{2r_{m}}(a_{m}\bo a_{m} -r^2_{m}I)\right)
  \\ \geq
   \left\langle \left(\frac{x\bo x}{2r_{m}}- \frac{1}{2}(\frac{a_{m}}{r_{m}}\bo x+ x\bo \frac{a_{m}}{r_{m}}) +\frac{1}{2r_{m}}(a_m\bo a_m -r^2_{m}I)\right)u^m,u^m\right\rangle\langle u^m,u^m\rangle^{-1}
 \end{multline*}
 for large $m$, as $\langle u^m,u^m\rangle\to \langle w,w\rangle =1$. Since
 \[
   \langle (\frac{x\bo x}{2r_{m}}- \frac{1}{2}(\frac{a_{m}}{r_{m}}\bo x+ x\bo \frac{a_{m}}{r_{m}}))u^m,u^m\rangle \to
  \langle- \frac{1}{2}(e_I\bo x+ x\bo e_I)w,w\rangle
 \]
 and
 \[
  \lim_{m} \langle \frac{1}{2r_{m}}((a_{m}\bo a_{m}) u^m -r^2_{m}u^m),u^m\rangle =  - \sum_{s\in I}\alpha_s\langle (e_s\bo e_s)w,w\rangle,
 \]
 the inequality follows.

 From (\ref{55}) and  (\ref{liminf}), we now obtain
 \[
\lim_{m} (2r_{m})^{-1}(\|(x-a_{m})\bo (x-a_{m})\|-r_{m}^2) =  \langle- \frac{1}{2}(e_I\bo x+ x\bo e_I)w,w\rangle - \sum_{s\in I}\alpha_s\langle (e_s\bo e_s)w,w\rangle,
 \]
which implies that the left-hand side of (\ref{techeqn}) does not exceed the right-hand side.

 To prove equality of the two sides, pick $v\in V_2(e_I)$ with $\langle v,v\rangle =1$  such that
  \begin{equation*}
\langle (-\frac{1}{2}(e_I\bo x+x\bo e_I)-\sum_{i\in I}\alpha_i (e_i\bo e_i))v, \,v\rangle
= \sup_{u\in V_2(e_I)\colon \langle u,u\rangle =1}
\langle (-\frac{1}{2}(e_I\bo x+x\bo e_I)-\sum_{i\in I}\alpha_i (e_i\bo e_i))u, \,u\rangle.
 \end{equation*}
 Let $v^m = P_2(e^m_I)v$.  Again by definition of $w^m$, we have for large $m$,
  \begin{multline*}
 \langle (\frac{x\bo x}{2r_{m}}- \frac{1}{2}(\frac{a_{m}}{r_{m}}\bo x+ x\bo \frac{a_{m}}{r_{m}}) +\frac{1}{2r_{m}}(a_{m}\bo a_{m} -r^2_{m}I))w^m,w^m\rangle\\
 \geq
 \langle (\frac{x\bo x}{2r_{m}} - \frac{1}{2}(\frac{a_{m}}{r_{m}}\bo x+ x\bo \frac{a_{m}}{r_{m}}) +\frac{1}{2r_{m}}(a_{m}\bo a_{m} -r^2_{m}I))v^m, v^m\rangle \langle v^m, v^m\rangle^{-1},
  \end{multline*}
where $e^m_I=\sum_{i\in I}e_{im}$ and $v^m =P_2(e^m_I)v\to P_2(e_I)v =v$.

Write $v^m  = \displaystyle \sum_{s,t\in I\colon s\leq t} v^m_{st} \in \bigoplus_{0\leq s\leq t\leq r} V_{st}^m$. Then
$\langle \frac{x\bo x}{2r_{m}} v^m,v^m\rangle\to 0$ and
  \[
   \langle - \frac{1}{2}(\frac{a_m}{r_{m}}\bo x+ x\bo \frac{a_m}{r_{m}})v^m,v^m\rangle
   \to \langle - \frac{1}{2}(a\bo x+ x\bo a)v,v\rangle.
    \]

As before,
    \begin{align}
    \langle \frac{1}{2r_{m}}(a_m\bo a_m -r^2_{m}I)v^m,v^m\rangle  & =
-\sum_{s\in I} \frac{\alpha_{sm}(2r_{m}-\alpha_{sm})}{2r_{m}}\langle v^m_{ss}, v^m_{ss}\rangle \notag\\
& \qquad\qquad  -
\sum_{s,t\in I\colon s<t} \Big( \frac{\alpha_{sm}(2r_{m}-\alpha_{sm})}{4r_{m}}+ \frac{\alpha_{tm}(2r_{m}-\alpha_{tm})}{4r_{m}}\Big)\langle v^m_{st},v^m_{st}\rangle \notag\\
&  \to  -\sum_{s\in I} \alpha_s\langle(e_s\bo e_s) v,v\rangle.\notag
 \end{align}

 We conclude that
   \[
  \lim_{m\to\infty} (2r_{m})^{-1}(\|(x-a_{m})\bo (x-a_{m})\|-r_{m}^2)
  \geq
   \langle- \frac{1}{2}(e_I\bo x+ x\bo e_I)v - \sum_{s\in I}\alpha_s (e_s\bo e_s)v,v\rangle,
     \]
which completes the proof.
 \end{proof}

\begin{remark*}\rm In (\ref{techeqn}), we have
   \begin{multline}
\sup_{u\in V_2(e_I)\colon \langle u,u\rangle =1}
\langle (-\frac{1}{2}(e_I\bo x+x\bo e_I)-\sum_{i\in I}\alpha_i (e_i\bo e_i))u,u\rangle
\\
=
 \sup_{u\in V_2(e_I)\colon \langle u,u\rangle =1}
\langle (-\frac{1}{2}(e_I\bo P_2(e_I)x+P_2(e_I)x\bo e_I)-\sum_{i\in I}\alpha_i (e_i\bo e_i))u,u\rangle\\
 =
 \Lambda_{V_2(e_I)}(-\frac{1}{2}(e_I\bo P_2(e_I)x+P_2(e_I)x\bo e_I)-\sum_{i\in I}\alpha_i (e_i\bo e_i)),
    \end{multline}
where the latter denotes the maximum eigenvalue of the operator
 $$-\frac{1}{2}(e_I\bo P_2(e_I)x+P_2(e_I)x\bo e_I)-\sum_{i\in I}\alpha_i (e_i\bo e_i)$$
  restricted to the subspace $V_2(e_I)$, which it leaves invariant.
 \end{remark*}

We are now ready to describe the horofunction boundary $V(\infty)$.
 \begin{theorem}\label{formofh}
 Let $h$ be a horofunction in ${V}(\infty)$. Then there exist $I\subset \{1,\ldots,r\}$ nonempty, mutually orthogonal minimal tripotents $e_i\in V$ and $\alpha_i\geq 0$ for  $i\in I$, with $\min_{i\in I}\alpha_i =0$, such that
\begin{multline}\label{formh2}
h(x) = \sup\{\langle (-\frac{1}{2}(e_I\bo x+x\bo e_I)-\sum_{i\in I}\alpha_i (e_i\bo e_i))u,u\rangle\colon
 u\in V_2(e_I),\, \langle u,u\rangle =1\} \\
= \Lambda_{V_2(e_I)}(-\frac{1}{2}(e_I\bo P_2(e_I)x+P_2(e_I)x\bo e_I)-\sum_{i\in I}\alpha_i (e_i\bo e_i))
\qquad (x\in V),
\end{multline}
where $e_I =\sum_{i\in I} e_i$ is a tripotent with Peirce $2$-space $V_2(e_I)$ and Peirce projection $P_2(e_I)$.

Conversely, each function $h \colon V\to \mathbb{R}$ of the form in (\ref{formh2}) is a horofunction of $V$.
\end{theorem}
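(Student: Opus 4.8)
The plan is to obtain both directions of the theorem as direct consequences of Lemma \ref{techlem}, combined with Lemma \ref{Rieffel} and the algebraic reduction for $h_{a_k}$ recorded just before Lemma \ref{techlem}. The analytic heart of the matter has already been absorbed into Lemma \ref{techlem}, so the proof of the theorem should amount to setting up the right sequences and reading off the constraints.

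For the forward direction, I would take a horofunction $h\in V(\infty)$. By Lemma \ref{Rieffel} there is a sequence $(a_k)$ with $r_k=\|a_k\|\to\infty$ and $h_{a_k}\to h$ pointwise. Using the identity for $h_{a_k}(x)$ displayed before Lemma \ref{techlem}, whose denominator tends to $1$, the computation of $h(x)$ reduces to $\lim_k (2r_k)^{-1}(\|(x-a_k)\bo(x-a_k)\|-r_k^2)$. Taking a spectral decomposition $a_k=\sum_i\lambda_{ik}e_{ik}$ and passing to a subsequence, I may assume $e_{ik}\to e_i$ (the minimal tripotents form a closed set) and $\alpha_{ik}=r_k-\lambda_{ik}\to\alpha_i\in[0,\infty]$. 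Since $h=\lim_k h_{a_k}$ already exists, this subsequential limit is still $h$, and Lemma \ref{techlem} then yields exactly the supremum in \eqref{formh2} with $I=\{i\colon\alpha_i<\infty\}$ and $e_I=\sum_{i\in I}e_i$. The normalising constraints are immediate: $\alpha_{1k}=r_k-\lambda_{1k}=r_k-\|a_k\|=0$, so $1\in I$ (hence $I\neq\emptyset$) and $\min_{i\in I}\alpha_i=0$, while the $e_i$ with $i\in I$ are mutually orthogonal minimal tripotents by compactness, exactly as observed in the proof of Lemma \ref{techlem}. The second equality in \eqref{formh2}, rewriting the supremum as $\Lambda_{V_2(e_I)}$ of the restricted operator, is precisely the content of the Remark following Lemma \ref{techlem}.

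For the converse, given the data $I$, the tripotents $e_i$ and the scalars $\alpha_i\geq 0$ with $\min_{i\in I}\alpha_i=0$, I would reverse-engineer a realising sequence. Extend $\{e_i\colon i\in I\}$ to a frame of $r$ mutually orthogonal minimal tripotents and, for $k$ large enough that all coefficients are positive, set $a_k=\sum_{i\in I}(k-\alpha_i)e_i$, assigning coefficient $0$ to the adjoined tripotents. The condition $\min_{i\in I}\alpha_i=0$ forces $\|a_k\|=\max_{i\in I}(k-\alpha_i)=k=:r_k$ by orthogonality, so $r_k\to\infty$. The tripotents are constant in $k$, and $\alpha_{ik}=r_k-\lambda_{ik}$ equals $\alpha_i$ for $i\in I$ and tends to $\infty$ for the padded indices, so $\{i\colon\alpha_i<\infty\}=I$. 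Lemma \ref{techlem} then gives that $\lim_k(2r_k)^{-1}(\|(x-a_k)\bo(x-a_k)\|-r_k^2)$ equals the supremum in \eqref{formh2}, that is $h(x)$; together with the reduction formula this shows $h_{a_k}\to h$ pointwise. As $r_k\to\infty$, Lemma \ref{Rieffel} (applicable since a finite dimensional normed space is proper and geodesic) yields $h\in V(\infty)$.

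I expect the only real friction to be bookkeeping rather than substance. The points needing care are: confirming that a subsequential limit of the $h_{a_k}$ agrees with the prescribed horofunction (immediate, since $h$ is already the full pointwise limit); justifying that the orthogonal family $\{e_i\}_{i\in I}$ extends to a frame of size $r$ and that padding with zero coefficients leaves the hypotheses of Lemma \ref{techlem} intact; and verifying $\|a_k\|=k$, which is exactly where the normalisation $\min_{i\in I}\alpha_i=0$ enters and which explains why this condition is simultaneously necessary (it matches $\alpha_{1k}=0$ in the forward direction) and sufficient.
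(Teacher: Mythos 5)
Your proposal is correct and follows essentially the same route as the paper: both directions reduce to Lemma \ref{techlem} via the algebraic identity for $h_{a_k}$, with the forward direction extracting subsequential limits of the spectral data and the converse realised by the very same sequence $a_k = ke_I - \sum_{i\in I}\alpha_i e_i$ that the paper uses. Your explicit remarks about extending $\{e_i\}_{i\in I}$ to a frame with zero coefficients and about why $\|a_k\|=k$ only spell out bookkeeping the paper leaves implicit.
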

\begin{proof}
Let $h =\lim_k h_{a_k}$ be a horofunction, where $a_k\in V$ and  $r_k= \|a_k\|\to\infty$.   For each $k$, let
$ a_k = \sum_{i=1}^r \lambda_{ik}e_{ik}$ be a spectral decomposition, with
pairwise  orthogonal minimal tripotents $e_{ik}$ and $\|a_k\|=\lambda_{1k}\geq \ldots\geq \lambda_{rk}\geq 0$.

After taking a subsequence, we may assume  $\alpha_{ik} = r_k -\lambda_{ik}\to \alpha_i \in [0,\infty]$ and $e_{ik}\to e_i$ for all $i =1,\ldots,r$, where $e_1,\ldots,e_r$ are mutually orthogonal minimal tripotents. Let $I=\{i\colon \alpha_i<\infty\}$ and $e_I =\sum_{i\in I} e_i$, and note that  $I\neq \emptyset$, since $\alpha_1=0$.

We have
\[
h_{a_k}(x) = \frac{\|x-a_k\|^2-\|a_k\|^2}{\|x-a_k\|+\|a_k\|} =  \frac{(2r_k)^{-1}(\|(x-a_k)\bo (x-a_k)\|-r_k^2)}{2^{-1}(\|r_k^{-1}(x-a_k)\|+1)}
\]
and $2^{-1}(\|r_k^{-1}(x-a_k)\|+1)\to 1$. Hence  (\ref{formh2}) follows readily from Lemma \ref{techlem}
and the preceding remark.

Conversely, let $h \colon V\to\mathbb{R}$ be of the form (\ref{formh2}), where $I$ is a nonempty subset of
$\{1, \ldots, r\}$, $\min_{i\in I}\alpha_i =0$ and $e_I=\sum_{i\in I} e_i$ is the sum of  mutually orthogonal minimal tripotents $e_i$.

We show $h \in V(\infty)$. For $k= 1, 2, \ldots,$
 define
$$a_k= ke_I- \sum_{i\in I} \alpha_ie_i.$$

For $k\geq \max_{i\in I}\alpha_i$, we have  $r_k = \|a_k\| =k$ and, in the notation of Lemma \ref{techlem},
  $$\alpha_{ik} =\left \{\begin{matrix} \alpha_i & (i\in I)\\  k & (\rm{otherwise}) \end{matrix}\right.$$
It follows from  Lemma \ref{techlem} that
\[
\lim_{k\to\infty} (2r_k)^{-1}(\|(x-a_k)\bo (x-a_k)\|-r_k^2)
= \sup_{u\in V_2(e_I)\colon \langle u,u\rangle =1}
\langle (-\frac{1}{2}(e_I\bo x+x\bo e_I)-\sum_{i\in I}\alpha_i (e_i\bo e_i))u,u\rangle
\]
and
\begin{multline*}
\lim_{k\to\infty} h_{a_k}(x)
  =  \lim_{k\to\infty} \frac{(2r_k)^{-1}(\|(x-a_k)\bo (x-a_k)\| - r_k^2)}{2^{-1}(\|r_k^{-1}(x-a_k)\|+1)}\\
 =   \sup_{u\in V_2(e_I)\colon \langle u,u\rangle =1}
\langle (-\frac{1}{2}(e_I\bo x+x\bo e_I)-\sum_{i\in I}\alpha_i (e_i\bo e_i))u,u\rangle =h(x)
\end{multline*}
for all $x\in V$. Hence $h\in V(\infty)$.
\end{proof}

If $I$ is a singleton in the preceding theorem, $e_I =e$, where $e$ is a minimal tripotent. In that case the last term of (\ref{formh2}) vanishes
 and $h$ is a real continuous linear functional of $V$. Indeed, $P_2(e)(V) = V_2(e) = \mathbb{C}e$
 implies $P_2(e)x =\ell(x)e$ for some functional $\ell\in V^*$ and
 \begin{multline*}
h(x)  =  \langle -\frac{1}{2}(e\bo P_2(e)x + P_2(e)x\bo e)e,e\rangle \langle e,e\rangle^{-1}=   -\frac{1}{2}(\langle e, (\ell(x)e\bo e) e\rangle  + \langle(\ell(x)e\bo e)e,e\rangle) \langle e,e\rangle^{-1}\\
 =   -\frac{1}{2}(\ol{\ell(x)}+\ell(x))  = -\re\, \ell(x).
\end{multline*}

\begin{remark}\label{rm}
 In the course of proving the preceding theorem, we observe that each horofunction $h\in V(\infty)$ can actually  be constructed from a sequence $(a_k)$ going to infinity along a straight line, which is a geodesic in the normed space $V$. In fact, the sequence $a_k= ke_I -\sum_{i\in I}\alpha_i e_i$ used in the proof lies on the straight line, $t\mapsto te_I -\sum_{i\in I}\alpha_ie_i$ in the flat $\oplus_{i\in I}\mathbb{R}e_i$. Also note that if $k\geq m$ with $k\geq \max_{i\in I}\alpha_i$, then $\|a_k\| =k$ and  $h_{a_k}(a_m) = \|a_k-a_m\| -\|a_k\| = (k-m) - k = -m$,
 so that  $h(a_m) =-m$ for all $m$.
 \end{remark}
 By the remark we have the following corollary.
\begin{corollary}\label{buse}  Each horofunction in ${V}(\infty)$ is a Busemann point.
\end{corollary}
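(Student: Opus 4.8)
The plan is to prove the corollary by producing, for each horofunction $h\in V(\infty)$, an explicit almost geodesic converging to $h$, and thereby verifying the definition of a Busemann point directly. The natural candidate is already supplied by Remark \ref{rm}: writing $h$ in the form of Theorem \ref{formofh} with data $I$, mutually orthogonal minimal tripotents $(e_i)_{i\in I}$ and $\alpha_i\geq 0$ (with $\min_{i\in I}\alpha_i=0$), the sequence
$$a_k = k e_I - \sum_{i\in I}\alpha_i e_i = \sum_{i\in I}(k-\alpha_i)e_i, \qquad e_I=\sum_{i\in I}e_i,$$
satisfies $\lim_k h_{a_k}=h$ by the converse part of the proof of Theorem \ref{formofh}. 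Hence it suffices to show that $(a_k)$ is an unbounded almost geodesic.

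First I would record the two elementary norm facts that drive everything. Since the $e_i$ are mutually orthogonal minimal tripotents, their scalar multiples remain orthogonal, so $\|\sum_{i\in I}c_ie_i\|=\max_{i\in I}|c_i|$; in particular $\|e_I\|=1$, and for $k\geq \max_{i\in I}\alpha_i$ we get $\|a_k\|=\max_{i\in I}(k-\alpha_i)=k-\min_{i\in I}\alpha_i=k$, using $\min_{i\in I}\alpha_i=0$. This already yields unboundedness, as $\|a_k\|=k\to\infty$.

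The crux is that $(a_k)$ lies on the straight line $t\mapsto te_I-\sum_{i\in I}\alpha_ie_i$ in the flat $\bigoplus_{i\in I}\mathbb{R}e_i$, which is a unit-speed geodesic of the normed space $V$. Concretely, for $n\geq m$ one has $a_n-a_m=(n-m)e_I$, so $\|a_n-a_m\|=n-m$, and therefore for all $n\geq m\geq N$ with $N\geq \max_{i\in I}\alpha_i$,
$$\|a_n-a_m\|+\|a_m\|-\|a_n\| = (n-m)+m-n = 0 < \epsilon.$$
Thus the almost-geodesic inequality holds with vanishing defect, so $(a_k)$ is an almost geodesic. Combined with $\lim_k h_{a_k}=h$ from Theorem \ref{formofh}, this exhibits $h$ as a Busemann point, which is the assertion.

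I do not anticipate a genuine obstacle: the only content is recognizing that a straight-line (hence geodesic) sequence makes the almost-geodesic defect identically zero, together with the spectral/orthogonality bookkeeping that forces $\|a_k\|=k$ on the tail. The one point to handle carefully is consistency between the reference point in the almost-geodesic definition and the base point $0$ used in the Busemann limit; but because the relevant points are collinear in the direction $e_I$, the additivity $\|a_n-a_m\|+\|a_m\|=\|a_n\|$ holds irrespective of that choice, so no difficulty arises.
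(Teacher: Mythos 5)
Your proof is correct and follows essentially the same route as the paper: the paper derives the corollary from Remark \ref{rm}, observing that the sequence $a_k = ke_I - \sum_{i\in I}\alpha_i e_i$ constructed in the converse part of Theorem \ref{formofh} lies on a straight line (a geodesic of the normed space $V$), hence is an almost geodesic converging to $h$. Your write-up merely makes explicit the norm computations ($\|a_k\|=k$ on the tail, $\|a_n-a_m\|=n-m$ by orthogonality) that the paper leaves implicit, so there is nothing to correct.
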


For general  finite dimensional normed vector spaces  it need not be true that all horofunctions are Busemann points, see \cite{Wa2}.

\section{Homeomorphism  onto the dual unit ball}\label{homdualball}

In this section we give a homeomorphism of the metric compactification of $V$ onto the closed dual unit ball $B^*$ of $(V,\|\cdot\|)$.  We subsequently show in the next section that this homeomorphism maps each equivalence class in $V(\infty)/\sim$ onto the relative interior of a boundary face of $B^*$. So, the dual ball $B^*$ captures the geometry of the metric compactification of $V$.


To prove these results we need a lemma concerning the partial ordering on the set of tripotents of a JB$^*$-triple. Recall that, given two tripotents $c$ and $e$ in $V$, we shall write $c \leq e$ if $e-c$ is a tripotent in $V$ orthogonal to $c$, or equivalently, if and only if $P_2(c)e =c$ (\cite[Corollary 1.7]{FriRu85}). We also have $c\leq e$ if and only if $c$ is a projection in the JB$^*$-algebra $V_2(e)$. Particularly,
\begin{equation}\label{eq ordered trip}
	c=\{e,c,e\}
\end{equation}
(cf.\,\cite[pp.34-36]{book1}).
\begin{lemma}\label{eicj5} Given two tripotents $c, e\in V$, the following conditions are equivalent:
	\begin{enumerate}
		\item[\rm (i)] $c\leq e$,
		\item[\rm (ii)] $\{c, P_2(c)e,c\} + \{c,c, P_2(c)e\} =2c$,
		\item[\rm (iii)] $\{c, e,c\} + \{c,c, e\} =2c$,
		\item[\rm (iv)] $B(e,c)w=0$ for all $w\in V_2(c)$.
	\end{enumerate}
\end{lemma}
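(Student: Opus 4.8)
The plan is to prove the cycle (i) $\Rightarrow$ (iii) $\Rightarrow$ (ii) $\Rightarrow$ (i) together with the separate equivalence (i) $\Leftrightarrow$ (iv). Throughout I write $e_2 = P_2(c)e \in V_2(c)$ and work inside the unital JB*-algebra $V_2(c)$, whose product is $x\circ y = \{x,c,y\}$ and whose involution is $x^* = Q_c x$, so that $\{c,c,w\}=w$ and $\{c,w,c\}=w^*$ for $w\in V_2(c)$. The two forward (algebraic) implications are straightforward. For (i) $\Rightarrow$ (iii), if $c\leq e$ then $e = c+d$ with $d=e-c$ a tripotent orthogonal to $c$, so $c\bo d=0$ and $\{c,c,d\}=0$; hence $\{c,e,c\}=\{c,c,c\}+\{c,d,c\}=c+(c\bo d)(c)=c$ and $\{c,c,e\}=c+\{c,c,d\}=c$, giving (iii). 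For (iii) $\Rightarrow$ (ii) I apply $P_2(c)$: by the Peirce rules $\{c,e,c\}=Q_c e=\{c,e_2,c\}$ already lies in $V_2(c)$, while $P_2(c)\{c,c,e\}=\{c,c,e_2\}$ since $c\bo c$ is the identity on $V_2(c)$, so projecting (iii) onto $V_2(c)$ yields exactly (ii). (In fact (iii) also forces $P_1(c)e=0$.)

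The essential content is the converse (ii) $\Rightarrow$ (i). Condition (ii) reads $e_2+e_2^*=2c$, i.e. the self-adjoint part of $e_2$ equals the unit $c$, so $e_2=c+iv$ with $v=v^*\in A(c)$. Since Peirce projections are contractive and $\|e\|=1$, I have $\|e_2\|=\|P_2(c)e\|\leq 1$. It then remains to show $v=0$: the self-adjoint part of $\{e_2,e_2,e_2\}$ computes to $c+v^2$, and using $\|\tfrac12(z+z^*)\|\leq\|z\|$ (from $\|z^*\|=\|z\|$) together with $\|\{e_2,e_2,e_2\}\|=\|e_2\|^3\leq 1$ gives $\|c+v^2\|\leq 1$. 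By (\ref{a<e}) this means $c+v^2\leq c$, so $v^2\leq 0$; as $v^2\in A(c)_+$ we get $v^2=0$ and hence $\|v\|^2=\|v^2\|=0$. Thus $e_2=c$, i.e. $P_2(c)e=c$, which is (i). (Equivalently, functional calculus in the commutative JB*-subalgebra generated by $c$ and $v$ gives $\|c+iv\|=(1+\|v\|^2)^{1/2}$, forcing $v=0$ directly.)

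For the equivalence (i) $\Leftrightarrow$ (iv): if $c\leq e$ then $d=e-c\in V_0(c)$, and for $w\in V_2(c)$ the cross terms $\{d,c,w\}$, $\{c,w^*,d\}$ and $\{d,w^*,d\}$ all vanish by (\ref{eq Peirce arit 2}), leaving $B(e,c)w = w-2w+w=0$, which is (iv). For the converse I compute the $V_2(c)$-component of $B(e,c)w$ for $w\in V_2(c)$: expanding $B(e,c)w = w-2\{e,c,w\}+Q_e(w^*)$ and keeping only the $V_2(c)$-parts (the $V_1(c)$- and $V_0(c)$-contributions being irrelevant) yields exactly $Q_{c-e_2}(w^*)$. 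Hence (iv) forces $Q_{c-e_2}=0$ on $V_2(c)$, and since $\|Q_a\|=\|a\|^2$ in any JB*-triple — which follows from contractivity of the triple product together with $\|\{a,a,a\}\|=\|a\|^3$ — I conclude $\|c-e_2\|=0$, so $e_2=c$ and $c\leq e$.

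I expect the main obstacle to be precisely the two rigidity converses (ii) $\Rightarrow$ (i) and (iv) $\Rightarrow$ (i). The purely algebraic Peirce identities only pin down the self-adjoint part of $e_2$ (in (ii)) or the operator $Q_{c-e_2}$ restricted to $V_2(c)$ (in (iv)), and neither is enough on its own to conclude $e_2=c$; it is the metric structure of the JB*-algebra $V_2(c)$ — entering through (\ref{a<e}), the norm identity $\|Q_a\|=\|a\|^2$, and $\|x^*\|=\|x\|$ — that upgrades these relations to the identity $e_2=c$. The care needed is to verify that the correct Peirce components are isolated so that these norm inequalities can be applied in $V_2(c)$ rather than in the whole of $V$.
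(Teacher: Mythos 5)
Your proof is correct, and for the two rigidity implications it takes genuinely different routes from the paper. For (ii) $\Rightarrow$ (i) the paper observes that $c$ is a maximal tripotent in $V_2(c)$, hence an extreme point of its closed unit ball, so writing $2c$ as the sum of the two norm-at-most-one elements $\{c,P_2(c)e,c\}$ and $P_2(c)e$ forces $P_2(c)e=c$ at once; you instead split $e_2=P_2(c)e=c+\mathrm{i}v$ (with $v$ self-adjoint) inside the $\JB^*$-algebra $V_2(c)$ and kill $v$ by a norm--order argument (the self-adjoint part of $\{e_2,e_2,e_2\}$ equals $c+v^2$, so $\|c+v^2\|\le 1$, so $v^2\le 0$ by (\ref{a<e}), hence $v^2=0$ and $\|v\|^2=\|v^2\|=0$). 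Your computation is right, and it trades the cited extreme-point theorem for elementary JB-algebra facts the paper already quotes. For (iv) $\Rightarrow$ (i) the paper invokes Loos's identities (JP1)--(JP3) together with the single test vector $w=c-\{c,a,c\}$, for which $B(a,c)w=\{c-a,c-a,c-a\}$; you instead compute the $V_2(c)$-component of $B(e,c)w$ for \emph{every} $w\in V_2(c)$ by Peirce arithmetic and identify it with $Q_{c-e_2}(w^*)$, so that (iv) makes $Q_{c-e_2}$ vanish on $V_2(c)$ and the identity (\ref{c*-identity}) finishes. That identification is correct, but the one step you leave implicit --- $P_2(c)\{e,c,w\}=\{e_2,c,w\}=\{c,w^*,e_2\}$, both sides being $e_2\circ w$ in the $\JB^*$-algebra $V_2(c)$ --- is precisely what reassembles the surviving Peirce components into $Q_{c-e_2}(w^*)$, and it deserves a line of justification. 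The remaining implications ((i) $\Rightarrow$ (iii) by orthogonality, (iii) $\Rightarrow$ (ii) by applying $P_2(c)$, and (i) $\Rightarrow$ (iv) via $e=c+d$ with $d\in V_0(c)$ and (\ref{eq Peirce arit 2})) agree with the paper's in substance; your cycle (i) $\Rightarrow$ (iii) $\Rightarrow$ (ii) $\Rightarrow$ (i) merely reorganises the paper's separate equivalences, and your (i) $\Rightarrow$ (iv) even avoids its Jordan-triple-identity manipulation.
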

\begin{proof}
	(i) $\Rightarrow$ (ii). If $c\leq e$, then $P_2(c)e =c$ and  (ii) follows directly.

	(ii) $\Rightarrow$ (i). We note that  $ \{c, P_2(c)e,c\} \in V_2(c)$ by \eqref{eq Peirce arit 1},	and $\{ c,c,P_2(c)e \}=P_2(c)e\in V_2(c)$.
	
	Observe that $c$ is a maximal tripotent in the JB*-triple $V_2(c)$, and hence $c$ is an extreme point of the closed unit ball of $V_2(c)$ (\cite[Theorem 3.2.3]{book1}). As $\|\{c, P_2(c)e,c\}\|, \|P_2(c)e\| \leq 1$, we conclude that  $P_2(c)e=c$, that is, $c \leq e$.
	
	(i) $\Rightarrow$ (iii). Suppose $c\leq e$. The orthogonality between $e$ and $e-c$ gives clearly (iii).
	
	(iii) $\Rightarrow$ (i). The arguments used in the implication (ii) $\Rightarrow$ (i) apply,
 by observing that $ \{c, e,c\}= \{c, P_2(c)e,c\} \in V_2(c)$  from \eqref{eq Peirce arit 1} and \eqref{eq Peirce arit 2}, which gives  $\{c,c,e\} =2c-  \{c, e,c\} \in V_2(c)$.
	
	(i) $\Rightarrow$ (iv). Let $w\in V_2(c)$.  Using orthogonality, \eqref{eq ordered trip} and the Jordan triple identity, we deduce
	\begin{eqnarray*}
		 B(e,c)w & = & w - 2 \{ e, c, w \} + \{ e, \{ c, w, c \} ,e \}\\
		& = & w- 2 \{ e, c, w \} + 2\{ e, c, \{ e, c, w \}\} - \{ w,c, \{e,c,e\} \}\\
		&= &  w- 2 \{ e, c, w \} + 2\{ e, c, \{ e, c, w \}\} - \{ w,c, c \}\\
		&= & w- 2 \{ e, c, w \} + 2\{ e, c, \{ e, c, w \}\} - w \\
		& = & w- 2 w + 2w - w  = 0.
	\end{eqnarray*}
	
	(iv) $\Rightarrow$ (i).
	Let $a = P_2(c)e \in V_2(c)$. We show $a=c$. Using the identities (JP1), (JP3) and (JP2) in \cite[Appendix]{loos},
	one deduces that
	$$ Q(c)B(a,c)w 
	= Q(c)B(e,c)w =0, \qquad  (w\in V_2(c)).$$
	Now take $w = c-\{c,a,c\}\in V_2(c)$. Then we have
	\begin{eqnarray*}
		B(a,c)w &=& B(a,c)c - B(a,c)\{c,a,c\}\\
		&= & c - 2a +\{a,c,a\} - (\{c,a,c\} - 2\{a,c, \{c,a,c \}\} + \{a, \{c,\{c,a,c\},c\},a\})\\
		&=& c - 2a +\{a,c,a\} - \{c,a,c\}+ 2\{c,a,a\} -\{a,a,a\}\\
		&=& \{c-a, c-a,c-a\},
	\end{eqnarray*}
	where
	$\{c,a,a\} = \{c,a,\{c,c,a\}\} = \{\{c,a,c\},c,a\} - \{c, \{a,c,c\},a\} + \{c,c,\{c,a,a\}\}= \{\{c,a,c\},c,a\}$.
	As  $\{c-a, c-a,c-a\}\in V_2(c)$, it  follows from (\ref{x*norm}) that
	$$\|\{c-a,c-a,c-a\}\| = \|Q(c)\{c-a,c-a,c-a\}\|=\|Q(c)B(a,c)w\|=0,$$
	so that $c-a=0$ by (\ref{c*-identity}).
	\end{proof}

\begin{remark}\label{+op}\rm
Given tripotents $c,e\in V$ and identity map $I\colon V \rightarrow V$, we have  $\|c\bo P_2(c)e + P_2(c)e \bo c\|\leq 2$ and
$2I- (c\bo P_2(c)e + P_2(c)e \bo c)$ is a positive operator on the Hilbert space $(V_2(c), \langle \cdot,\cdot\rangle)$ by
Remark \ref{norm}. Hence $\langle (2I-(c\bo P_2(c)e + P_2(c)e \bo c))v, v\rangle \geq 0$  for all $v\in V_2(c)$. Moreover,
$\langle (2I-(c\bo P_2(c)e + P_2(c)e \bo c))v, v\rangle =0 $ for all $v\in V_2(c)$ is equivalent to $2I=c\bo P_2(c)e + P_2(c)e \bo c$ on $V_2(c)$.
\end{remark}
The previous lemma has the following useful corollary.
\begin{corollary}\label{h=h'}
Suppose that $h$ and $h'$ are horofunctions given, respectively, by (\ref{formh2}) and
$$h'(x) = \Lambda_{V_2(c_J)}(-\frac{1}{2}(c_J\bo P_2(c_J)x+P_2(c_J)x\bo c_J)-\sum_{j\in J}\beta_j (c_j\bo c_j)).$$
Let $a=\sum_{i\in I}\alpha_i e_i$ and  $b=\sum_{j\in J} \beta_jc_j$. Then $h=h'$ if and only if $e_I=c_J$ and $a=b$.
\end{corollary}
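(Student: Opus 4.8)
For the forward (easy) direction the plan is to observe that the operator defining $h$ depends on the data only through the pair $(e_I,a)$. Writing $e:=e_I=c_J$ and letting $L_w$ denote Jordan multiplication $z\mapsto w\circ z$ in the JB$^*$-algebra $V_2(e)$, a short Peirce computation shows that on each joint Peirce space $V_{st}$ ($s,t\in I$) both $\sum_{i\in I}\alpha_i(e_i\bo e_i)$ and $L_a$ act as the scalar $\tfrac12(\alpha_s+\alpha_t)$, so that $\sum_{i\in I}\alpha_i(e_i\bo e_i)=L_a$ on $V_2(e)$. Since the linear part $-\tfrac12(e\bo P_2(e)x+P_2(e)x\bo e)$ depends only on $e$, the whole self-adjoint operator inside $\Lambda_{V_2(e)}$ in (\ref{formh2}) is determined by $e$ and $a$; hence $e_I=c_J$ and $a=b$ give $h=h'$ at once.

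For the converse, assume $h=h'$; the crux is to prove $e_I=c_J$, and the key idea is to test both functions along the ray $x=te_I$, $t\ge 0$. Since $e_I\bo e_I=\mathrm{Id}$ on $V_2(e_I)$, the operator inside $\Lambda_{V_2(e_I)}$ at $x=te_I$ is $-t\,\mathrm{Id}-\sum_{i\in I}\alpha_i(e_i\bo e_i)$, whose largest eigenvalue is $-t-\min_{i\in I}\alpha_i=-t$; thus $h(te_I)=-t$. Setting $T:=\tfrac12(c_J\bo P_2(c_J)e_I+P_2(c_J)e_I\bo c_J)$ and $B:=\sum_{j\in J}\beta_j(c_j\bo c_j)$ on $V_2(c_J)$, with $B\ge 0$ and $\lambda_{\min}(B)=\min_{j\in J}\beta_j=0$, the same computation gives $h'(te_I)=-\lambda_{\min}(tT+B)$, so $\lambda_{\min}(tT+B)=t$ for all $t\ge0$. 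Sandwiching $t\,\lambda_{\min}(T)\le \lambda_{\min}(tT+B)\le t\,\lambda_{\min}(T)+\lambda_{\max}(B)$ and letting $t\to\infty$ forces $\lambda_{\min}(T)=1$. As Remark~\ref{+op} gives $T\le\mathrm{Id}$ on $V_2(c_J)$, every eigenvalue of $T$ equals $1$, i.e. $c_J\bo P_2(c_J)e_I+P_2(c_J)e_I\bo c_J=2\,\mathrm{Id}$ on $V_2(c_J)$. Evaluating this identity at $c_J$ and using the symmetry of the triple product in its outer variables yields $\{c_J,P_2(c_J)e_I,c_J\}+\{c_J,c_J,P_2(c_J)e_I\}=2c_J$, which is condition (ii) of Lemma~\ref{eicj5}; hence $c_J\le e_I$. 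Interchanging the roles of $h$ and $h'$ (testing along $x=tc_J$) gives $e_I\le c_J$, and antisymmetry of $\le$ yields $e_I=c_J$.

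With $e:=e_I=c_J$ in hand, it remains to deduce $a=b$, and here I would restrict to self-adjoint arguments $x\in A(e)$. For such $x$ one has $P_2(e)x=x$ and, by the triple-product formula in $V_2(e)$, $-\tfrac12(e\bo x+x\bo e)=-L_x$ on $V_2(e)$; combined with $\sum_{i\in I}\alpha_i(e_i\bo e_i)=L_a$ this gives $h(x)=\Lambda_{V_2(e)}(-L_{x+a})=-\lambda_{\min}(x+a)$, the smallest spectral value of $x+a$ in $A(e)$, and likewise $h'(x)=-\lambda_{\min}(x+b)$. Thus $\lambda_{\min}(x+a)=\lambda_{\min}(x+b)$ for every $x\in A(e)$; replacing $x$ by $x-a$ and writing $d:=b-a$, this reads $\lambda_{\min}(x)=\lambda_{\min}(x+d)$ for all $x\in A(e)$. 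Taking $x=0$ and then $x=-d$ gives $\lambda_{\min}(d)=0$ and $\lambda_{\max}(d)=0$, so $d=0$ and $a=b$.

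I expect the main obstacle to be the identification step for $e_I=c_J$: namely recognising that evaluation along the single ray $x=te_I$ already pins down $\lambda_{\min}(T)=1$, and that the positivity in Remark~\ref{+op} then upgrades this to the full operator identity $T=\mathrm{Id}$, which Lemma~\ref{eicj5}(ii) converts into the order relation $c_J\le e_I$. The supporting reductions—that both $\sum_{i\in I}\alpha_i(e_i\bo e_i)$ and $-\tfrac12(e\bo x+x\bo e)$ (for self-adjoint $x$) are Jordan multiplication operators on $V_2(e)$—are routine Peirce-calculus computations, but they are what make the spectral bookkeeping transparent and should be recorded first.
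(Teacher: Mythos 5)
Your proof is correct, and its skeleton is the same as the paper's: both arguments test the equality $h=h'$ at points going to infinity in the direction $e_I$, use the positivity statement of Remark \ref{+op} to upgrade the resulting leading-order information to the operator identity $c_J\bo P_2(c_J)e_I+P_2(c_J)e_I\bo c_J=2I$ on $V_2(c_J)$, convert that identity via Lemma \ref{eicj5} into $c_J\le e_I$, and then symmetrize. The differences are in execution. Where the paper invokes Remark \ref{rm} (the sequence $a_k=ke_I-a$ is an almost geodesic, so $h(a_k)=-k$) and argues that the coefficient of $k$ in $0=k+h'(a_k)$ must vanish, you compute $h(te_I)=-t$ directly from the spectral form of (\ref{formh2}) and extract $\lambda_{\min}(T)=1$ for your operator $T=\tfrac12(c_J\bo P_2(c_J)e_I+P_2(c_J)e_I\bo c_J)$ via the Weyl sandwich $t\lambda_{\min}(T)\le\lambda_{\min}(tT+B)\le t\lambda_{\min}(T)+\lambda_{\max}(B)$; this is an equivalent but self-contained variant that bypasses the almost-geodesic machinery. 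The step $a=b$ is where the routes genuinely diverge: the paper harvests the subleading terms of the same limits, namely its equations (\ref{ab}) and (\ref{ba}), which say that the self-adjoint operator $\sum_{i\in I}\alpha_i(e_i\bo e_i)-\sum_{j\in J}\beta_j(c_j\bo c_j)$ has both nonpositive and nonnegative quadratic form on $V_2(e_I)=V_2(c_J)$, hence vanishes, and then evaluates it at $e_I$; you instead restrict to self-adjoint $x\in A(e)$, identify all the operators with Jordan multiplications, deduce $\lambda_{\min}(x+a)=\lambda_{\min}(x+b)$ for all $x\in A(e)$, and finish with the substitutions $x=-a$ and $x=-b$. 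Your finish is clean and more algebraic, but note that it rests on the auxiliary fact (which you rightly flag as needing to be recorded first) that for $y\in A(e)$ the minimal eigenvalue of $L_y$ on the Hilbert space $(V_2(e),\langle\cdot,\cdot\rangle)$ coincides with the minimal spectral value of $y$ in the JB-algebra $A(e)$ --- a joint-Peirce computation, since $L_y$ acts as the scalar $(\mu_k+\mu_l)/2$ on $V_{kl}$ when $y=\sum_k\mu_kf_k$ --- whereas the paper's route reuses only information already extracted from the limit and needs no such identification.
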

\begin{proof}
The sufficiency follows from the observation that $e_I=c_J$ and $a=b$ implies that
$$\sum_{i\in I}\alpha_i (e_i\bo e_i)
= (\sum_{i\in I}\alpha_i e_i) \bo e_I = ( \sum_{j\in J} \beta_jc_j)\bo c_J =\sum_{j\in J}\beta_j (c_j\bo c_j).$$

Conversely, let $\lim_k h_{a_k}=h=h' = \lim_k h_{b_k}$,
where we can choose $a_k= ke_I -a$ and $b_k= kc_J -b$ by Remark \ref{rm}.
As  $h(a_k)=-k$, we have $k + h'(a_k)=0$ and
   \begin{multline*}
 k + h'(a_k)   = k + \sup_{v\in V_2(c_J)\colon \langle v,v\rangle =1}\langle (-\frac{1}{2}(c_J\bo (ke_I-a)+(ke_I-a)\bo c_J -\sum_{j\in J}\beta_j (c_j\bo c_j))v,v\rangle\\
= k +  \sup_{v\in V_2(c_J)\colon \langle v,v\rangle =1}( \langle -\frac{k}{2}((c_J\bo P_2(c_J)e_I) +(P_2(c_J)e_I \bo c_J))v,v\rangle \\
+ \langle (\frac{1}{2}(c_J \bo a + a \bo c_J) -\sum_{j\in J}\beta_j(c_j\bo c_j) )v,v\rangle )\\
= k( \sup_{v\in V_2(c_J)\colon \langle v,v\rangle =1} ( \langle (I -\frac{1}{2}((c_J\bo P_2(c_J)e_I) +(P_2(c_J)e_I \bo c_J)))v, v\rangle\\
+ \langle (\frac{1}{2k}(c_J \bo a + a \bo c_J) - \frac{1}{k}\sum_{j\in J}\beta_j (c_j \bo c_j)) v,v\rangle))
   \end{multline*}
 for $k=1, 2, \ldots $. So we find that $\langle (I - \frac{1}{2}(c_J\bo P_2(c_J)e_I + P_2(c_J)e_I \bo c_J))v,v\rangle=0$ for all $v\in V_2(c_J)$, and hence $2I =c_J\bo P_2(c_J)e_I + P_2(c_J)e_I \bo c_J$ by Remark \ref{+op} on $V_2(c_J)$. Moreover,
\begin{equation}\label{ab}
 \sup_{v\in V_2(c_J)\colon \langle v,v\rangle =1}
 \langle(\frac{1}{2}(c_J \bo a + a \bo c_J) - \sum_{j\in J}\beta_j (c_j \bo c_j)) v,v\rangle=0.
\end{equation}
In particular, we have $2c_J= (c_J\bo P_2(c_J)e_I)(c_J) +(P_2(c_J)e_I \bo c_J)(c_J)$, so $c_J \leq e_I$ by Lemma \ref{eicj5}.

Analogously, $0= k + h'(b_k) = k + h(b_k)$ implies  $e_I \leq c_J$ and
\begin{equation}\label{ba}
 \sup_{v\in V_2(e_I)\colon \langle v,v\rangle =1}
 \langle(\frac{1}{2}(e_I \bo b + b \bo e_I) -\sum_{j\in J}\alpha_i (e_i \bo e_i)) v,v\rangle =0.
\end{equation}

We conclude that $e_I=c_J$ and note that this implies that
\[
\frac{1}{2}(c_J \bo a + a \bo c_J) =\frac{1}{2}(e_I \bo a + a \bo e_I) = \sum_{i\in I}\alpha_i (e_i\bo e_i)
\]
and
\[
\frac{1}{2}(e_I \bo b + b \bo e_I) = \frac{1}{2}(c_J \bo b + b \bo c_J) =  \sum_{j\in J}\beta_j (c_j \bo c_j).
\]
It now follows from (\ref{ab}) and (\ref{ba}) that $ \sum_{i\in I}\alpha_i (e_i \bo e_i)= \sum_{j\in J}\beta_j (c_j\bo c_j)$ on $V_2(e_I)=V_2(c_J)$, as both operators are self-adjoint. In particular,
$$a=\sum_{i\in I}\alpha_i e_i = \sum_{i\in I}\alpha_i (e_i \bo e_i)(e_I) =  \sum_{j\in J}\beta_j (c_j\bo c_j)(c_J)= \sum_{j\in J}\beta_j c_j=b.$$
\end{proof}

To show  that the metric compactification $V\cup V(\infty)$ is homeomorphic to  closed dual unit ball of $(V,\|\cdot\|)$, we identify $V$ with its (algebraic) dual space $V^*$ by using an inner-product on $V$, which we can do as $V$ is finite dimensional.  For convenience we adjust the inner-product $\langle\cdot,\cdot\rangle$ on $V$ to define a new inner-product $[\cdot,\cdot]$ such that $[c,c]=1$ for each minimal tripotent $c\in V$.

To realise this we note that a finite dimensional $\mathrm{JB}^*$-algebra $V$ decomposes into a finite $\ell_\infty$-sum,
$V = V_1\oplus\cdots\oplus V_d$, of {\it Cartan factors}, each of which contains a minimal tripotent (cf.\,\cite[Theorem 3.3.5, Theorem 3.8.17]{book2}. In a finite dimensional Cartan factor $V_j$, the tripotents form a compact submanifold $M_j$ of $V_j$,  in which
 the minimal tripotents form a connected component $N_j$ \cite[\S 5]{loos}. As shown in
 \cite[Proposition 2.2]{isi}, the tangent space $T_e(N_j)$ at each $e\in N_j$ identifies
 with $\mathrm{i}A(e) \oplus V_1(e)$, where $A(e)$ is defined in (\ref{ae}).

 In particular, $\dim (\mathrm{i}A(e) \oplus V_1(e))= 1 + \dim V_1(e)$
 is constant for all $e\in N_j$. On $V_j$ we define, for a fixed $e\in N_j$, a normalised trace form
 $$\langle x,y\rangle_j = \frac{1}{1+ \dim V_1(e)/2} {\rm Trace}\, (x \bo y)\qquad (x,y\in V_j),$$
so that $\langle c,c\rangle_j =1 $ for all $c\in N_j$, and $(V_j, \langle \cdot, \cdot\rangle_j)$ is a Hilbert space.

Henceforth, we denote by $[\cdot,\cdot]$ the inner-product of the Hilbert space direct sum
$V = V_1\oplus\cdots\oplus V_d$, with $(V_j,\langle\cdot,\cdot\rangle_j)$.
Then each minimal tripotent $c\in V$ lies in some $V_j$ and $[c,c] =1$. Moreover, the inner-product
is  associative, i.e.,  $$[\{a,b,y\}, z] = [y, \{b,a,z\}] \qquad (a,b,y, z \in V)$$
(cf.\,\cite[(2.31)]{book1}).
In particular, we have $[P_2(c)y,z] = [y, P_2(c)z]$ for each tripotent $c\in V$. We note that if $a,b\in V$
are triple orthogonal, i.e., $a\bo b=0$, then $[a,b]=0$.

By the Riesz representation theorem, the map
\begin{equation}\label{tilde}
x\in V \mapsto \widetilde x=[ \cdot, x] \in V^*
\end{equation}
is a conjugate linear isomorphism.

Let $D^*=\{\widetilde{x}\in V^*\colon \|x\|_*\leq 1\}$ be the closed dual unit ball in dual space of the $\mathrm{JB}^*$-triple $(V,\|\cdot\|)$.
Then for each $\widetilde{x} =[\cdot,x]\in V^*$, where $x$ has spectral decomposition $x = \sum_{j=1}^r \lambda_j c_j \in V$, we have $\|\widetilde x\|_*=\sum_{j=1}^r \lambda_j$. Indeed,
$$\|\widetilde x\|_* =  \sup_{\|y\|=1} |[ y, x]| = \sup_{\|y\|=1} \sum_{j=1}^r \lambda_j |[ y, c_j]|
= \sup_{\|y\|=1} \sum_{j=1}^r \lambda_j |[ P_2(c_j)y, c_j]| \leq
\sup_{\|y\|=1} \sum_{j=1}^r \lambda_j [ c_j, c_j] = \sum_{j=1}^r \lambda_j,$$
as  $P_2(c_j)y =\mu c_j$ for some $\mu\in\mathbb{C}$ and $|\mu|=\|P_2(c_j)y\| \leq \|y\|=1$. On the other hand, $y=\sum_{j=1}^r c_j$  satisfies $\|y\|\leq 1$ and hence
 $\|\widetilde{x}\|_*\geq | \sum_{j=1}^r [c_j, x]| = \sum_{j=1}^r \lambda_j$.

We define
\begin{equation}\label{D0}
D^\circ=\{ x\in V\colon x=\sum_{j=1}^r \lambda_jc_j\mbox{ spectral decomposition with }\sum_{j=1}^r \lambda_j\leq 1\}.
\end{equation}
It follows from the previous observations that the conjugate linear isomorphism $x\in V\mapsto \widetilde{x} \in V^*$ in (\ref{tilde}) maps $D^\circ$ onto $D^*$, and hence $D^\circ$ is the closed unit ball of a norm on $V$.

Thus, to prove that $V\cup V(\infty)$ is homeomorphic to $D^*$, it suffices to show that there exists a homeomorphism $\phi$ from $V\cup V(\infty)$ onto $D^\circ$, which is what we will do.

\begin{remark}\label{uuni}\rm
Given $x\in V$ with spectral decomposition $x=\sum_{i=1}^r \lambda_ie_i$, so $r$ is the rank of $V$, we have that the eigenvalues $\lambda_1\geq \ldots\geq \lambda_r\geq 0$ are unique, but the pairwise orthogonal minimal tripotents $e_i$ need not be unique. We can however collect terms with equal non-zero eigenvalue in the sum and write $x = \sum_{i=1}^s\mu_ic_i$, where $\mu_1>\ldots>\mu_s>0$ and the $c_i$'s are (not necessarily minimal) pairwise orthogonal tripotents. In this case both the $\mu_i$'s and $c_i$'s are unique, see \cite[Corollary 3.12]{loos}. For clarity we refer to this decomposition of $x$, as the {\em unique spectral decomposition}.
\end{remark}

We define the map $\phi\colon V \cup V(\infty) \to {D}^\circ$ by
\begin{equation}\label{phi1}
\phi(x) = \frac{\sum_{i=1}^r (\exp \lambda_i - \exp(-\lambda_i)) c_i}{\sum_{i=1}^r (\exp\lambda_i+\exp(-\lambda_i))}
\end{equation}
for each $x\in V$ with spectral decomposition $x = \sum_{i=1}^r \lambda_ic_i  \in V$, and
\begin{equation}\label{phi2}
\phi(h)  = \frac{\sum_{i\in I} \exp(-\alpha_i) e_i}{\sum_{i\in I} \exp(-\alpha_i)}
 \end{equation}
 for $h\in{V}(\infty)$ of the form,
 \[
 h(x) =  \Lambda_{V_2(e_I)}(-\frac{1}{2}(e_I\bo P_2(e_I)x+P_2(e_I)x\bo e_I)-\sum_{i\in I}\alpha_i (e_i\bo e_i))
 \qquad (x\in V).
 \]
 To see that $\phi(x)$ is well-defined note that the right-hand side of (\ref{phi1}) is the spectral decomposition of $\phi(x)$. Moreover,  if $\lambda_i=0$, then the correspond coefficient in $\phi(x)$ is also $0$. So by switching to the unique spectral decomposition we find that $\phi(x)$ is well-defined by Remark \ref{uuni}. Also $\phi(h)$ is well defined. Indeed, if $h$ was expressed as
 \[
 h(x) =  \Lambda_{V_2(c_J)}(-\frac{1}{2}(c_J\bo P_2(c_J)x+P_2(c_J)x\bo c_J)-\sum_{j\in J}\beta_j (c_j\bo c_j))
 \qquad (x\in V),
 \]
 then $e_I =c_J$ and $a=\sum_{i\in I}\alpha_i e_i=\sum_{j\in J} \beta_jc_j =b$ by Corollary \ref{h=h'}. Viewing $a$ and $b$ in the $\mathrm{JB}^*$-subtriple $V_2(e_I)=V_2(c_J)$ we may assume after relabelling that $I=J$ and $\alpha_i =\beta_i$ for all $i\in I=J$. Now using the unique spectral decomposition and the fact that $e_I =c_J$, we find that $\phi(h)$ is well-defined.

\begin{theorem}\label{homeo} The map $\phi\colon V \cup V(\infty) \to {D}^\circ$ is a homeomorphism.
\end{theorem}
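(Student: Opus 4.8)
The plan is to exploit that $V\cup V(\infty)$ is compact and that $D^\circ\subset V$ is a compact Hausdorff space, so it suffices to prove that $\phi$ is a continuous bijection: a continuous bijection from a compact space onto a Hausdorff space is automatically a homeomorphism. First I would record that $\phi$ respects the decomposition $D^\circ=\Int D^\circ\sqcup\partial D^\circ$. For $x\in V$ the coefficients in (\ref{phi1}) sum to $\sum_i(\exp\lambda_i-\exp(-\lambda_i))/\sum_i(\exp\lambda_i+\exp(-\lambda_i))<1$, so $\phi(V)\subseteq\Int D^\circ$, whereas for $h\in V(\infty)$ the coefficients in (\ref{phi2}) sum to $1$, so $\phi(V(\infty))\subseteq\partial D^\circ$. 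It is therefore enough to show that $\phi$ restricts to bijections $V\to\Int D^\circ$ and $V(\infty)\to\partial D^\circ$, and that $\phi$ is continuous.

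For the bijection onto $\Int D^\circ$ I would argue analytically. Writing $x=\sum_k\mu_k d_k$ in its unique spectral decomposition (Remark \ref{uuni}), with $m_k=\mathrm{rank}(d_k)$ and $p=\sum_k m_k$, one gets $\phi(x)=\sum_k(2\sinh\mu_k/S)\,d_k$ where $S=2\sum_k m_k\cosh\mu_k+2(r-p)$, and this is again the unique spectral decomposition of $y=\phi(x)$ (minimal tripotents with eigenvalue $0$ drop out of the numerator). Hence the tripotents $d_k$ and the integers $m_k,p$ are read off from $y$, and reconstructing $x$ amounts to solving, for the distinct coefficients $\nu_k$ of $y$, the equations $2\sinh\mu_k=\nu_k S$. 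Setting $t_k=\sinh\mu_k$ this collapses to the single scalar equation $S=2\sum_k m_k\sqrt{1+\nu_k^2S^2/4}+2(r-p)$; since $y\in\Int D^\circ$ forces $\sum_k m_k\nu_k<1$ (this sum being the value of the $D^\circ$-norm of $y$), the right-hand side has derivative strictly below $1$, so it has a unique positive solution $S$, whence unique $t_k=\nu_k S/2$ and unique $\mu_k=\mathrm{arcsinh}(t_k)$. This gives both injectivity and surjectivity of $\phi|_V$ onto $\Int D^\circ$. For $V(\infty)\to\partial D^\circ$, injectivity follows from Corollary \ref{h=h'}: from $\phi(h)$ one recovers the support tripotent $e_I=\sum_{i\in I}e_i$ and, using $\min_{i\in I}\alpha_i=0$ to fix the normalisation, the element $a=\sum_{i\in I}\alpha_i e_i$. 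Surjectivity is explicit: for $y=\sum_k\nu_k d_k\in\partial D^\circ$ one splits each $d_k$ into minimal tripotents and sets $\alpha_i=\log(\nu_{\max}/\nu_k)$, producing a horofunction with $\phi(h)=y$.

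For continuity I would separate the interior and boundary behaviour. On $V$ the numerator of $\phi$ is $2\sinh$ applied to $x$ in the odd triple functional calculus and the denominator is $2\sum_i\cosh\lambda_i$, a symmetric continuous function of the (sorted) eigenvalue multiset; both depend continuously on $x$ and the denominator is bounded below by $2r$, so $\phi|_V$ is continuous. Convergence to the boundary is the heart of the matter: if $a_k\in V$ with $r_k=\|a_k\|\to\infty$ and $h_{a_k}\to h$, then, passing to a subsequence along which $\alpha_{ik}=r_k-\lambda_{ik}\to\alpha_i\in[0,\infty]$ and $e_{ik}\to e_i$ as in the proof of Theorem \ref{formofh}, I would factor $\exp(r_k)$ from numerator and denominator of $\phi(a_k)$ to obtain $\phi(a_k)=\bigl(\sum_i(\exp(-\alpha_{ik})-\exp(-r_k-\lambda_{ik}))e_{ik}\bigr)/\bigl(\sum_i(\exp(-\alpha_{ik})+\exp(-r_k-\lambda_{ik}))\bigr)$, which converges to $\sum_{i\in I}\exp(-\alpha_i)e_i/\sum_{i\in I}\exp(-\alpha_i)=\phi(h)$. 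Since $V$ is separable the compactification is metrizable, so continuity can be tested on sequences; combining the two computations with the subsequence principle and the density of the internal points $\{h_y:y\in V\}$ (each horofunction being a limit of such, by Lemma \ref{d2}) reduces an arbitrary convergent sequence to these two cases.

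The main obstacle I anticipate is exactly this continuity at the horofunction boundary: one must convert the abstract pointwise convergence $h_{a_k}\to h$ into convergence of the spectral data $(\lambda_{ik},e_{ik})$, and control the non-uniqueness of the minimal tripotent frames, which may jump with $k$. This is where Remark \ref{pij}, the compactness of the set of minimal tripotents, and the passage to subsequences do the real work, while the scalar monotonicity argument is the engine behind the bijectivity on the interior. Once continuity and bijectivity are established, compactness of $V\cup V(\infty)$ together with the Hausdorff property of $D^\circ$ yields that $\phi$ is a homeomorphism.
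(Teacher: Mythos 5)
Your proposal is correct, and while it shares the paper's overall skeleton (continuous bijection from the compact space $V\cup V(\infty)$ onto the Hausdorff space $D^\circ$, split along $\Int D^\circ\sqcup\partial D^\circ$, with the same explicit surjectivity onto $\partial D^\circ$ and the same use of Corollary \ref{h=h'} for boundary injectivity), it genuinely departs from the paper at two points. First, for the bijection $V\to\Int D^\circ$ the paper argues topologically: injectivity comes from Remark \ref{uuni} together with a scalar uniqueness fact quoted from \cite[Lemma 3.7]{LP}, and surjectivity (Lemma \ref{=}) is proved by Brouwer's invariance of domain plus a contradiction argument showing a boundary point of $\phi(V)$ inside $\Int D^\circ$ would force $\|v_n\|\to\infty$ and hence $\phi(v_n)\to\partial D^\circ$. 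Your monotone scalar equation $S=2\sum_k m_k\sqrt{1+\nu_k^2S^2/4}+2(r-p)$, whose right-hand side has derivative bounded by $\sum_k m_k\nu_k<1$, yields existence and uniqueness of the preimage simultaneously, constructively and without algebraic topology; it is in effect the fact the paper outsources to \cite{LP}. Second, and more significantly, the paper proves continuity at boundary points along boundary sequences \emph{directly} (Lemma \ref{ctinf}), resting on the technical Lemma \ref{lem:5.17} about convergence of the operator expressions defining horofunctions; your density/diagonal argument --- pick $y_n\in V$ with $\sigma(h_{y_n},h_n)<1/n$ and $\|\phi(y_n)-\phi(h_n)\|<1/n$, which exists precisely because your interior-to-boundary lemma holds at $h_n$, then apply that lemma again at $h$ --- removes Lemma \ref{lem:5.17} from the proof of Theorem \ref{homeo} entirely. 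That is a real simplification for this theorem, though the paper needs Lemma \ref{lem:5.17} again in Section \ref{ExpHom} for Theorem \ref{exp}, so globally the computation is not wasted there. Two compressions in your write-up are worth flagging, though neither is a gap: in the interior-to-boundary step, identifying your computed limit with $\phi(h)$ still requires Lemma \ref{techlem} to recognize the subsequential limit of $(h_{a_k})$ as the horofunction with data $(e_I,\sum_{i\in I}\alpha_ie_i)$, and then Corollary \ref{h=h'} with Remark \ref{uuni} to match that data with the data of $h$ (exactly as in the paper's corresponding unnamed lemma); and your continuity of $\phi|_V$ invokes norm-continuity of odd triple functional calculus and of the sorted eigenvalue vector, standard facts the paper avoids by running a compactness/subsequence argument on spectral decompositions (Lemma \ref{cty}).
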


The proof of the theorem will be split in to several lemmas. Note that the interior of $D^\circ$, denoted $\mathrm{int}\, {D}^\circ$, consists of those $x=\sum_{j=1}^r \lambda_jc_j$ with $\sum_{j=1}^r \lambda_j<1$, and the boundary, $\partial D^\circ$, of $D^\circ$ are precisely those $x=\sum_{j=1}^r \lambda_jc_j$ with $\sum_{j=1}^r \lambda_j=1$. This follows from the fact that $x\mapsto \widetilde{x}=[\cdot,x]$ is a conjugate linear isomorphism mapping $D^\circ$ onto $B^*$.
\begin{lemma} We have $\phi (V) \subset \mathrm{int}\, {D}^\circ$ and  $\phi(V(\infty)) \subset \partial {D}^\circ$.
\end{lemma}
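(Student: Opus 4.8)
The plan is to read the spectral decompositions of $\phi(x)$ and $\phi(h)$ directly off the defining formulas (\ref{phi1}) and (\ref{phi2}) and then compute, in each case, the sum of the spectral coefficients. This suffices because of the characterisation recorded just before the lemma: a point $y\in D^\circ$ with spectral decomposition $y=\sum_j\mu_jd_j$ lies in $\mathrm{int}\,D^\circ$ exactly when $\sum_j\mu_j<1$, and in $\partial D^\circ$ exactly when $\sum_j\mu_j=1$ (this is the transfer, via $x\mapsto\widetilde x$, of the fact that $\|\widetilde x\|_*=\sum_j\lambda_j$ for $x=\sum_j\lambda_jc_j$).

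For $x=\sum_{i=1}^r\lambda_ic_i\in V$ with $\lambda_1\geq\cdots\geq\lambda_r\geq 0$, I would first note that $t\mapsto \exp t-\exp(-t)$ is nonnegative and increasing on $[0,\infty)$, so the numerator of (\ref{phi1}) already exhibits $\phi(x)$ as a combination $\sum_{i=1}^r\nu_ic_i$ of the mutually orthogonal minimal tripotents $c_i$ with decreasing nonnegative coefficients $\nu_i$; dropping any vanishing terms (which occur precisely when $\lambda_i=0$) yields its spectral decomposition without altering the sum of the coefficients. That sum equals
$$
\frac{\sum_{i=1}^r(\exp\lambda_i-\exp(-\lambda_i))}{\sum_{i=1}^r(\exp\lambda_i+\exp(-\lambda_i))},
$$
which is strictly less than $1$ since the denominator minus the numerator equals $2\sum_{i=1}^r\exp(-\lambda_i)>0$ (the sum is over a nonempty index set as $r\geq 1$). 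Hence $\phi(x)\in\mathrm{int}\,D^\circ$.

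For a horofunction $h\in V(\infty)$ written as in (\ref{formh2}) with mutually orthogonal minimal tripotents $e_i$ $(i\in I)$ and $\alpha_i\geq 0$, the vector $\phi(h)$ in (\ref{phi2}) is a combination of those same mutually orthogonal minimal tripotents with strictly positive coefficients $\exp(-\alpha_i)/\sum_{j\in I}\exp(-\alpha_j)$, so after reordering decreasingly this is its spectral decomposition; the coefficients sum to $\sum_{i\in I}\exp(-\alpha_i)/\sum_{j\in I}\exp(-\alpha_j)=1$, placing $\phi(h)\in\partial D^\circ$. I do not expect a genuine obstacle here: both inclusions collapse to the elementary fact $\exp(-t)>0$ (equivalently $\sinh t<\cosh t$) in the first case and to the normalisation built into (\ref{phi2}) in the second. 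The only point requiring a little care is checking that (\ref{phi1}) and (\ref{phi2}) really are spectral decompositions, i.e. that the tripotents involved are mutually orthogonal minimal tripotents and the coefficients are nonnegative and may be ordered decreasingly; this has essentially already been secured in the discussion showing that $\phi$ is well defined (Remark \ref{uuni} and Corollary \ref{h=h'}).
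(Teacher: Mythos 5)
Your proof is correct and takes essentially the same approach as the paper's: read the spectral decompositions of $\phi(x)$ and $\phi(h)$ directly off (\ref{phi1}) and (\ref{phi2}), then use the characterisation of $\mathrm{int}\, D^\circ$ and $\partial D^\circ$ via the sum of spectral coefficients (strictly less than $1$ in the first case, exactly $1$ in the second). The paper's proof is simply a terser version of the same computation, asserting without elaboration the points you verify in detail.
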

\begin{proof} The right-hand side of (\ref{phi1})  is the spectral decomposition of $\phi(x)$ and  \[0\leq (\sum_{i=1}^r e^{\lambda_i}+e^{-\lambda_i})^{-1}(\sum_{i=1}^r e^{\lambda_i}-e^{-\lambda_i})<1,\] so $\phi(x)\in\mathrm{int}\, D^\circ$. Clearly,
$\phi(h)\in \partial D^\circ$.
\end{proof}

\begin{lemma}\label{cty} The map  $\phi\colon V \cup V(\infty) \to {D}^\circ$ is continuous on $V$.
\end{lemma}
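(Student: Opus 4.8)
The plan is to write $\phi$ on $V$ as a quotient of two continuous maps. For a spectral decomposition $x=\sum_{i=1}^r\lambda_ic_i$, set $N(x)=\sum_{i=1}^r(\exp\lambda_i-\exp(-\lambda_i))c_i$ and $d(x)=\sum_{i=1}^r(\exp\lambda_i+\exp(-\lambda_i))$, so that $\phi(x)=d(x)^{-1}N(x)$ on $V$. Since $d(x)\geq 2r\geq 2>0$, it suffices to prove that $N\colon V\to V$ and $d\colon V\to(0,\infty)$ are continuous; then $\phi|_V$ is continuous as a quotient. The central difficulty throughout is that, although the eigenvalues $\lambda_1\geq\cdots\geq\lambda_r\geq 0$ are uniquely determined by $x$, the minimal tripotents $c_i$ are not, so one cannot argue termwise; both $N$ and $d$ must be recognised as decomposition-independent continuous objects.

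For the numerator I would identify $N$ with the odd continuous functional calculus $x\mapsto\sum_i(2\sinh\lambda_i)c_i$. The key structural fact is that the odd triple powers are decomposition free and continuous: defining $x^{[1]}=x$ and $x^{[2k+1]}=\{x,x,x^{[2k-1]}\}$, orthogonality of the $c_i$ together with the Peirce rules gives $x^{[2k+1]}=\sum_i\lambda_i^{2k+1}c_i$, and each such iterated triple product is continuous in $x$ because the triple product is continuous. Hence for any odd polynomial $p$ the map $x\mapsto p(x)=\sum_ip(\lambda_i)c_i$ is continuous. To pass from polynomials to $2\sinh$, fix $x_0$ and put $R=\norm{x_0}+1$; for $\norm{x-x_0}<1$ every eigenvalue lies in $[0,R]$, since $\lambda_1=\norm{x}$. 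Given $\epsilon>0$, choose (Weierstrass, then take the odd part) an odd polynomial $p$ with $\sup_{t\in[-R,R]}|2\sinh t-p(t)|<\epsilon$. Using the orthogonality norm formula $\norm{\sum_i\mu_ic_i}=\max_i|\mu_i|$, which follows from $\|a+b\|=\max\{\|a\|,\|b\|\}$ for orthogonal $a,b$ together with $\norm{c_i}=1$, we obtain $\norm{N(x)-p(x)}=\max_i|2\sinh\lambda_i-p(\lambda_i)|<\epsilon$, and likewise at $x_0$; combined with continuity of $x\mapsto p(x)$, a standard three-term triangle estimate yields continuity of $N$ at $x_0$. This step, namely continuity of the functional calculus despite the non-unique tripotents, is the main obstacle, and the orthogonality norm identity is exactly what makes it work (it also shows, as a by-product, that $N$ is independent of the chosen decomposition, being a limit of the decomposition-free maps $p(x)$).

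For the denominator I would show that the eigenvalue multiset depends continuously on $x$, whence $d(x)$, a symmetric continuous function of the $\lambda_i$, is continuous. Continuity of the eigenvalues follows from continuity of the even power sums: with the associative inner product $[\cdot,\cdot]$ normalised so that $[c_i,c_j]=\delta_{ij}$ for orthogonal minimal tripotents, the identity $x^{[2k+1]}=\sum_i\lambda_i^{2k+1}c_i$ gives $[x^{[2k+1]},x]=\sum_i\lambda_i^{2(k+1)}$ for $k=0,\dots,r-1$, each a continuous function of $x$. These $r$ power sums of $\{\lambda_1^2,\dots,\lambda_r^2\}$ determine, via Newton's identities, the elementary symmetric functions of the $\lambda_i^2$, hence the sorted multiset $\{\lambda_i^2\}$ (the roots of a monic polynomial depend continuously on its coefficients), and therefore the sorted tuple $(\lambda_1,\dots,\lambda_r)$ depends continuously on $x$. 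Since $d(x)\geq 2r>0$ is bounded away from zero, the quotient $\phi|_V=d^{-1}N$ is continuous, which completes the proof.
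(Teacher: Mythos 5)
Your proof is correct, but it follows a genuinely different route from the paper's. The paper's own argument is a short compactness argument: for $v_k \to v$ it takes spectral decompositions $v_k = \sum_{i=1}^r \mu_{ik} c_{ik}$, extracts a subsequence along which each eigenvalue sequence converges and each $c_{ik}$ converges to a minimal tripotent (possible because the minimal tripotents form a compact set in finite dimensions, and mutual orthogonality passes to the limit), observes that the limits assemble into a spectral decomposition $v = \sum_i \mu_i c_i$, and then reads off $\phi(v_k)\to\phi(v)$ directly from the defining formula together with the well-definedness of $\phi$ (Remark \ref{uuni}); a subsequence-of-subsequences argument upgrades this to the full sequence. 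You instead prove continuity of numerator and denominator separately and intrinsically: the numerator as the odd functional calculus $x \mapsto \sum_i 2\sinh(\lambda_i)c_i$, approximated locally uniformly by the decomposition-free continuous maps $x \mapsto p(x)$ built from the odd triple powers $x^{[2k+1]}$, with the error controlled by the orthogonality norm identity $\|\sum_i \mu_i c_i\| = \max_i |\mu_i|$; and the denominator via continuity of the sorted eigenvalue tuple, obtained from the power sums $[x^{[2k+1]},x] = \sum_i \lambda_i^{2(k+1)}$ and Newton's identities. Both proofs are complete. The paper's is shorter and uses the same subsequence technique that recurs throughout the rest of the paper, but it leans on compactness of the tripotent set and on the previously established well-definedness of $\phi$; yours is longer but buys reusable structural facts --- continuity of the odd functional calculus and of the eigenvalue map --- re-derives well-definedness of $\phi|_V$ as a by-product, and avoids extracting convergent tripotents altogether, which is the step of the paper's proof that is most tied to finite dimensionality.
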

\begin{proof}
Let $(v_k)$ be a sequence in $V$ converging to $v\in V$.
To show $\phi(v_k) \rightarrow \phi(v)$ as $k \rightarrow \infty$, we show that each subsequence of $(v_k)$ contains a subsequence $(v_n)$ satisfying $\phi(v_n) \rightarrow \phi(v)$ as $n \rightarrow \infty$.

For each $k$, let $v_k =\sum_{i=1}^r \mu_{ik}c_{ik}$ be a spectral decomposition. By convergence, $(v_k)$ is bounded. Hence each subsequence of $(v_k)$ contains a subsequence
$$v_n = \sum_{i=1}^r \mu_{in}c_{in}$$
such that $\mu_{in} \rightarrow \mu_i\geq 0$ and $c_{in} \rightarrow c_i$ as $n\rightarrow \infty$,
where $c_1, \ldots, c_r$ are mutually orthogonal  minimal tripotents.

It follows that $v= \lim_n v_n =  \sum_{i=1}^r\mu_i c_i$ and
$$\phi(v_n) =  \frac{\sum_{i=1}^r (e^{\mu_{in}}-e^{-\mu_{in}}) c_{in}}{\sum_{i=1}^r e^{\mu_{in}}+e^{-\mu_{in}}}~
\longrightarrow ~ \frac{\sum_{i=1}^r (e^{\mu_i}-e^{-\mu_i}) c_i}{\sum_{i=1}^r e^{\mu_i}+e^{-\mu_i}} =\phi(v)
$$
as $n \rightarrow \infty$, which completes the proof.
\end{proof}

\begin{lemma}\label{=} We have $\phi(V) =\mathrm{int}\, {D}^\circ$.
\end{lemma}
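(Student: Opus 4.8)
The previous lemma already gives the inclusion $\phi(V)\subseteq\mathrm{int}\,D^\circ$, so the plan is to establish the reverse inclusion, i.e. that $\phi$ maps $V$ \emph{onto} $\mathrm{int}\,D^\circ$. I would fix $y\in\mathrm{int}\,D^\circ$ and take its unique spectral decomposition $y=\sum_{i=1}^s\mu_i d_i$ with $\mu_1>\cdots>\mu_s>0$ and pairwise orthogonal tripotents $d_i$ (Remark \ref{uuni}), writing $r_i$ for the rank of $d_i$ and $m_0=r-\sum_{i=1}^s r_i\geq 0$. Since the eigenvalues of $y$ are the $\mu_i$ repeated $r_i$ times (together with zeros), the characterisation of $\mathrm{int}\,D^\circ$ gives the strict inequality $\sum_{i=1}^s r_i\mu_i<1$; this is the crucial input.

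Because the formula (\ref{phi1}) for $\phi$ preserves the flat spanned by a fixed family of orthogonal tripotents, I would look for a preimage of the form $x=\sum_{i=1}^s\nu_i d_i$ with the \emph{same} tripotents $d_i$ and scalars $\nu_i>0$ to be found. The one genuine difficulty is that the normalising denominator $S=\sum 2\cosh(\cdot)$ in (\ref{phi1}) couples all eigenvalues, so this is not a componentwise inversion; this coupling is the main obstacle. The way around it is to reduce the whole system to a single scalar equation in $S$: prescribing $\sinh\nu_i=\mu_i S/2$ (that is, $\nu_i=\mathrm{arcsinh}(\mu_i S/2)$) forces $\cosh\nu_i=\sqrt{1+\mu_i^2S^2/4}$, and demanding that $S$ equal the actual denominator of $\phi(x)$ turns into the fixed-point equation
\[
S=G(S):=2\sum_{i=1}^s r_i\sqrt{1+\tfrac14\mu_i^2S^2}+2m_0 .
\]

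Existence of a root $S^*>0$ follows from the intermediate value theorem applied to $G(S)-S$ on $[0,\infty)$: one has $G(0)-0=2\big(\sum_i r_i+m_0\big)=2r>0$, while $\sqrt{1+\mu_i^2S^2/4}\sim\mu_iS/2$ gives $G(S)/S\to\sum_{i=1}^s r_i\mu_i<1$ as $S\to\infty$, hence $G(S)-S\to-\infty$. (A direct computation gives $G'(S)=\sum_i r_i\mu_i\,(\mu_iS/2)(1+\mu_i^2S^2/4)^{-1/2}<\sum_i r_i\mu_i<1$, so $G-\mathrm{id}$ is strictly decreasing and $S^*$ is unique, although uniqueness is not needed here.) With $S^*$ fixed I set $\nu_i=\mathrm{arcsinh}(\mu_iS^*/2)$, which are positive and, since $\mathrm{arcsinh}$ is strictly increasing, satisfy $\nu_1>\cdots>\nu_s>0$.

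Finally I would verify $\phi(x)=y$ for $x=\sum_{i=1}^s\nu_i d_i$, where each $d_i$ is split into $r_i$ minimal tripotents and $m_0$ further orthogonal minimal tripotents with eigenvalue $0$ are appended, so that $x$ has rank $r$ as required. The denominator of $\phi(x)$ is exactly $2\sum_i r_i\cosh\nu_i+2m_0=G(S^*)=S^*$, and the coefficient of $d_i$ in the numerator is $2\sinh\nu_i=\mu_iS^*$, whence $\phi(x)=\sum_i(\mu_iS^*/S^*)d_i=\sum_i\mu_i d_i=y$. This yields $\mathrm{int}\,D^\circ\subseteq\phi(V)$ and hence the claimed equality $\phi(V)=\mathrm{int}\,D^\circ$.
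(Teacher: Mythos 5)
Your proof is correct, but it takes a genuinely different route from the paper. The paper argues topologically: since $\phi$ is continuous and injective on $V$, Brouwer's invariance of domain shows $\phi(V)$ is open in $\mathrm{int}\,D^\circ$; if the inclusion were proper, connectedness would give a point $w\in\partial\phi(V)\cap\mathrm{int}\,D^\circ$, and any sequence $(v_n)$ with $\phi(v_n)\to w$ must satisfy $\|v_n\|\to\infty$, whereupon a spectral/subsequence analysis (the same normalisation $\alpha_{in}=\lambda_{1n}-\lambda_{in}$ used throughout the paper) forces $\phi(v_n)$ to accumulate on $\partial D^\circ$, a contradiction. You instead prove surjectivity directly by constructing a preimage of each $y\in\mathrm{int}\,D^\circ$: you correctly identify the coupling through the normalising denominator as the only obstacle, reduce it to the scalar fixed-point equation $S=G(S)$, and the intermediate value theorem applies precisely because the interior condition $\sum_i r_i\mu_i<1$ controls the slope at infinity; your bookkeeping of the $m_0$ zero eigenvalues in the denominator is also right, and the recovered eigenvalues $\nu_i=\mathrm{arcsinh}(\mu_iS^*/2)$ are automatically distinct, positive and decreasing, so $\phi(x)=y$ follows by well-definedness of $\phi$. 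What each approach buys: the paper's proof recycles its unbounded-sequence computations and avoids solving any equation, but it invokes invariance of domain and implicitly the injectivity of $\phi$ on $V$, which in the paper's ordering is only proved afterwards (Lemma \ref{inj}); your proof is elementary and self-contained (no invariance of domain, no injectivity, no continuity needed), produces explicit preimages together with uniqueness of the radial parameter $S^*$, and is the natural interior analogue of the paper's own constructive proof of $\phi(V(\infty))=\partial D^\circ$ in Lemma \ref{sur2}, where the inversion is easier because the normalising constant cancels on the boundary.
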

\begin{proof} As $\phi$ is continuous on $V$ and maps $V$ into $\mathrm{int}\, {D}^\circ$, we know from the Brouwer invariance of domain theorem that $\phi(V)$ is open in $\mathrm{int}\, {D}^\circ$. Suppose, for the sake of contradiction, that $\phi(V)\neq \mathrm{int}\, {D}^*$. Then there exists $w\in\partial \phi(V)\cap \mathrm{int}\, {D}^\circ$. Let $(v_n)$ in $V$ be such that $\phi(v_n)\to w$.  As $\phi$ is continuous on $V$, we must have that $\|v^n\|\to\infty$.

Consider the spectral decomposition $v_n = \sum_{i=1}^r \lambda_{in}c_{in}$. After taking a subsequence we may assume that (1) $c_{in}\to c_i$, and (2) $\alpha_{in}=\lambda_{1n}-\lambda_{in}\to \alpha_i \in [0,\infty]$, for all $i=1,\ldots,r$.
Note that $\lambda_{1n} =\|v_n\|\to\infty$.

Let $I=\{ i\colon \alpha_i<\infty\}$. Then $1\in I$ and
\[
\phi(v_n) = \frac{\sum_{i=1}^r (e^{\lambda_{in}}-e^{-\lambda_{in}}) c_{in}}{\sum_{i=1}^r e^{\lambda_{in}}+e^{-\lambda_{in}}}
   = \frac{\sum_{i=1}^r (e^{-\alpha_{in}}-e^{-\lambda_{1n}-\lambda_{in}}) c_{in}}{\sum_{i=1}^r e^{-\alpha_{in}}+e^{-\lambda_{1n}-\lambda_{in}}} \to
  \frac{ \sum_{i\in I} e^{-\alpha_i}c_i}{\sum_{i\in I} e^{-\alpha_i}}.
\]
This implies that $w=\lim_{n\to\infty}\phi(v_n) \in\partial D^\circ$, which contradicts the assumption that $w\in \mathrm{int}\, {D}^\circ$.
\end{proof}

\begin{lemma}\label{sur2} The map $\phi$ satisfies $\phi(V(\infty)) =\partial {D}^\circ$.
\end{lemma}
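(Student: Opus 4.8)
The inclusion $\phi(V(\infty))\subseteq\partial D^\circ$ was already established above, so the plan is to prove the reverse inclusion $\partial D^\circ\subseteq\phi(V(\infty))$, namely that every boundary point of $D^\circ$ arises as $\phi(h)$ for some horofunction $h$. The whole point is to invert the explicit formula (\ref{phi2}) for $\phi$ on the horofunction boundary; since horofunctions are parametrised by Theorem \ref{formofh} in exactly the same data $(I,(e_i),(\alpha_i))$ that appears in (\ref{phi2}), this inversion can be carried out by hand.

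Concretely, I would fix $w\in\partial D^\circ$ and write its spectral decomposition $w=\sum_{i=1}^r\nu_ie_i$, with $\nu_1\geq\cdots\geq\nu_r\geq0$ and mutually orthogonal minimal tripotents $e_i$; by the description of $\partial D^\circ$ recalled before Theorem \ref{homeo} we have $\sum_{i=1}^r\nu_i=1$. Set $I=\{i\colon\nu_i>0\}$, which is nonempty since $\sum_i\nu_i=1$ forces $\nu_1>0$. I would then define
\[
\alpha_i=\log\frac{\nu_1}{\nu_i}\geq 0\qquad(i\in I),
\]
which is tailored so that $\min_{i\in I}\alpha_i=\alpha_1=0$, matching the normalisation built into the parametrisation of horofunctions. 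With $e_I=\sum_{i\in I}e_i$, let $h$ be the function given by (\ref{formh2}) for the data $(I,(e_i)_{i\in I},(\alpha_i)_{i\in I})$; by the converse direction of Theorem \ref{formofh}, $h$ is a genuine horofunction in $V(\infty)$.

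It then remains to verify $\phi(h)=w$. Since $e^{-\alpha_i}=\nu_i/\nu_1$ for $i\in I$, the defining formula (\ref{phi2}) gives
\[
\phi(h)=\frac{\sum_{i\in I}e^{-\alpha_i}e_i}{\sum_{i\in I}e^{-\alpha_i}}
=\frac{\nu_1^{-1}\sum_{i\in I}\nu_ie_i}{\nu_1^{-1}\sum_{i\in I}\nu_i}
=\frac{\sum_{i\in I}\nu_ie_i}{\sum_{i\in I}\nu_i}=w,
\]
where the last equality uses $\sum_{i\in I}\nu_i=\sum_{i=1}^r\nu_i=1$. Hence $w\in\phi(V(\infty))$, which establishes $\partial D^\circ\subseteq\phi(V(\infty))$ and, together with the inclusion already proved, yields $\phi(V(\infty))=\partial D^\circ$. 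I do not anticipate any serious obstacle: the argument is an explicit inversion, and the only delicate point is ensuring the candidate data defines an admissible horofunction, i.e. that $\min_{i\in I}\alpha_i=0$ holds and $I\neq\emptyset$, both of which are guaranteed by the normalisation $\sum_i\nu_i=1$ and the choice $\alpha_i=\log(\nu_1/\nu_i)$.
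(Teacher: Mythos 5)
Your proof is correct and is essentially identical to the paper's: both take the spectral decomposition of a boundary point, define $\alpha_i=\log(\nu_1/\nu_i)$ on the indices with nonzero eigenvalue (the paper writes this as $\alpha_i=\mu_i-\mu_1$ with $\mu_i=-\log\nu_i$), invoke the converse direction of Theorem \ref{formofh} to get a horofunction, and verify $\phi(h)=w$ by the same cancellation using $\sum_i\nu_i=1$. No gaps.
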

\begin{proof}
Let $x\in \partial {D}^\circ$. Then $x$ has spectral decomposition $x=\sum_{i=1}^r\lambda_i e_i$ with $\sum_{i=1}^r \lambda_i =1$ and there exists  a $p\in\{1,\ldots,r\}$ such that $\lambda_1\geq \ldots\geq \lambda_p>0$ and $\lambda_{s}=0$ for $p<s\leq r$.

For $i=1,\ldots,p$ put $\mu_i = -\log \lambda_i $ and $\alpha_i = \mu_i -\mu_1$. Then $\alpha_i\geq 0$ and  $\alpha_1=0$.
Now consider the horofunction $h\in V(\infty)$ given by
\[
h(x) =  \Lambda_{V_2(e)}(-\frac{1}{2}(e\bo P_2(e)x+P_2(e)x\bo e)-\sum_{i=1}^p\alpha_i (e_i\bo e_i)) \qquad (x\in V).
 \]
 Then we have
 \[
 \phi(h)  =  \frac{\sum_{i=1}^p \exp(-\alpha_i) e_i}{\sum_{i=1}^p \exp(-\alpha_i)}
  =  \frac{\sum_{i=1}^p \lambda_i  e_i}{\sum_{i=1}^p \lambda_i} = x.
 \]
\end{proof}

\begin{lemma}\label{inj} The map $\phi$ is injective on $V\cup V(\infty)$.
\end{lemma}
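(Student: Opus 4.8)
The plan is to exploit the containments established in the first lemma in the proof of Theorem \ref{homeo}, namely $\phi(V)\subseteq\mathrm{int}\,D^\circ$ while $\phi(V(\infty))\subseteq\partial D^\circ$. Since these two target sets are disjoint, no point of $V$ can share a $\phi$-image with a horofunction, and it suffices to prove separately that $\phi$ is injective on $V$ and injective on $V(\infty)$.

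For injectivity on $V$, I would first rewrite (\ref{phi1}) using hyperbolic functions: if $x=\sum_{i=1}^r\lambda_ic_i$, then $\phi(x)=S^{-1}\sum_{i=1}^r\sinh(\lambda_i)\,c_i$ with $S=\sum_{i=1}^r\cosh(\lambda_i)$. Collecting equal eigenvalues as in Remark \ref{uuni}, write the unique spectral decomposition $x=\sum_{k=1}^s\mu_kg_k$ with $\mu_1>\cdots>\mu_s>0$, pairwise orthogonal tripotents $g_k$ of rank $n_k$, and $n_0=r-\sum_kn_k$ zero eigenvalues. Because $\sinh$ is strictly increasing, the numbers $\sinh(\mu_k)/S$ are distinct and strictly decreasing, so $\phi(x)=\sum_k(\sinh(\mu_k)/S)\,g_k$ is precisely the unique spectral decomposition of $\phi(x)$. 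Hence if $\phi(x)=\phi(y)$, the uniqueness in Remark \ref{uuni} forces both decompositions to share the same tripotents $g_k$ (and thus the same $n_k$ and the same $n_0$), reducing matters to the scalar system $\sinh(\mu_k)/S=\sinh(\mu_k')/S'$ for all $k$, where $S=\sum_kn_k\cosh(\mu_k)+n_0$ and $S'=\sum_kn_k\cosh(\mu_k')+n_0$.

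The crux is to deduce $\mu_k=\mu_k'$ from this system, and this is the main obstacle, because the eigenvalues of $\phi(x)$ are coupled through the common denominator $S$. I would set $\rho=S'/S$, so that $\sinh(\mu_k')=\rho\sinh(\mu_k)$ for every $k$, and show $\rho=1$. Writing $a_k=\sinh(\mu_k)>0$ and substituting $\cosh=\sqrt{1+\sinh^2}$, the relation $S'=\rho S$ rearranges to
$$\sum_kn_k\bigl(\sqrt{1+\rho^2a_k^2}-\rho\sqrt{1+a_k^2}\bigr)=(\rho-1)n_0.$$
Assuming $\rho\geq1$ (otherwise swap the roles of $x$ and $y$), each summand on the left is $\leq0$ since $1+\rho^2a_k^2\leq\rho^2+\rho^2a_k^2$, while the right-hand side is $\geq0$; if $\rho>1$ the left-hand side is strictly negative (as $s\geq1$ whenever $\phi(x)\neq0$), a contradiction. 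Hence $\rho=1$, so $S=S'$ and $\sinh(\mu_k)=\sinh(\mu_k')$, giving $\mu_k=\mu_k'$ and $x=y$; the degenerate case $\phi(x)=0$ forces $x=0$ directly.

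For injectivity on $V(\infty)$, I would lean on Corollary \ref{h=h'}, which says a horofunction of the form (\ref{formh2}) is determined by the pair $(e_I,\ a=\sum_{i\in I}\alpha_ie_i)$. Given $\phi(h)=\phi(h')$ with data $(I,e_i,\alpha_i)$ and $(J,c_j,\beta_j)$, formula (\ref{phi2}) exhibits $\phi(h)=T^{-1}\sum_{i\in I}e^{-\alpha_i}e_i$, with $T=\sum_{i\in I}e^{-\alpha_i}$, as a spectral decomposition whose tripotents are the $e_i$ and whose largest eigenvalue is $1/T$ (attained since $\min_{i\in I}\alpha_i=0$). Matching unique spectral decompositions yields equal tripotents, and summing them gives $e_I=c_J$; comparing the largest eigenvalues gives $1/T=1/T'$, so $T=T'$; equality of the remaining eigenvalues then gives $e^{-\alpha_i}=e^{-\beta_j}$ on corresponding tripotents, i.e. the matching $\alpha_i=\beta_j$, whence $a=b$. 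By Corollary \ref{h=h'} we conclude $h=h'$. Combining the three parts, $\phi$ is injective on all of $V\cup V(\infty)$.
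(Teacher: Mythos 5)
Your proof is correct, and its overall architecture coincides with the paper's: both split into the interior and boundary cases (using $\phi(V)\subseteq\mathrm{int}\,D^\circ$ and $\phi(V(\infty))\subseteq\partial D^\circ$, which the paper leaves implicit), and both settle each case by the uniqueness of spectral decompositions (Remark \ref{uuni}) combined, on the boundary, with Corollary \ref{h=h'}. The genuine difference is in the interior case. At the point where one must deduce $\mu_k=\mu_k'$ from the coupled system $\sinh(\mu_k)/S=\sinh(\mu_k')/S'$, the paper simply cites \cite[Lemma 3.7]{LP}, which is precisely this scalar statement (there phrased as: equality of the normalized coefficients $(e^{\lambda_j}-e^{-\lambda_j})/\sum_i(e^{\lambda_i}+e^{-\lambda_i})$ forces $\lambda_j=\mu_j$). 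You instead prove it from scratch: setting $\rho=S'/S$, rewriting $S'=\rho S$ via $\cosh=\sqrt{1+\sinh^2}$ as $\sum_k n_k\bigl(\sqrt{1+\rho^2a_k^2}-\rho\sqrt{1+a_k^2}\bigr)=(\rho-1)n_0$, and comparing signs to force $\rho=1$. This argument is sound: for $\rho\geq 1$ one has $\sqrt{1+\rho^2a_k^2}\leq\rho\sqrt{1+a_k^2}$, strictly when $\rho>1$ and $s\geq 1$, which contradicts the nonnegative right-hand side, and the symmetry between $x$ and $y$ disposes of $\rho<1$. What your route buys is self-containedness — the lemma no longer rests on an external reference — and it makes visible that the only real obstruction is the coupling of the eigenvalues through the common denominator $S$; the small cost is the bookkeeping with multiplicities $n_k$, $n_0$, which is legitimate since Remark \ref{uuni} gives uniqueness of the full ordered eigenvalue list, so the multiplicity patterns of $\phi(x)$ and $\phi(y)$ must agree. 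Your boundary case is the paper's argument with the steps reordered: the paper first equates the norms of the two representations of $\phi(h)$ (your comparison of largest eigenvalues, $1/T=1/T'$) and then invokes Remark \ref{uuni} and Corollary \ref{h=h'}.
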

\begin{proof}
Suppose  first that $\phi(x) =\phi(y)$. Let $x = \sum_{i=1}^r \lambda_i c_i$ and $y = \sum_{i=1}^r \mu_id_i$ be the spectral decompositions.
 Then we have
\[
\phi(x) = \frac{\sum_{i=1}^r (e^{\lambda_i}-e^{-\lambda_i})  c_i}{\sum_{i=1}^r e^{\lambda_i}+e^{-\lambda_i}} = \frac{\sum_{i=1}^r (e^{\mu_i}-e^{-\mu_i}) d_i}{\sum_{i=1}^r e^{\mu_i}+e^{-\mu_i}} =\phi(y)
\]
where the coefficients of the minimal tripotents of both sides are decreasing through the order of the indices.
If we let for $j=1, \ldots r$,
$$\alpha_j =  (\sum_{i=1}^r e^{\lambda_i}+e^{-\lambda_i})^{-1}(e^{\lambda_j}-e^{-\lambda_j})\mbox{\quad and\quad } \beta_j= (\sum_{i=1}^r e^{\mu_i}+e^{-\mu_i})^{-1}(e^{\mu_j}-e^{-\mu_j}),$$
then   $\alpha_j = \beta_j$ by Remark \ref{uuni}.  It now follows from \cite[Lemma 3.7]{LP} that $\lambda_j=\mu_j$ for $j=1,\ldots,r$.

Note that $\alpha_i=0$ if and only if $\lambda_i=0$, and similarly $\beta_i=0$ if and only if $\mu_i=0$. So by considering the unique spectral decompositions of $\phi(x)$ and $\phi(y)$ and using Remark \ref{uuni} we find that $x=y$.

Now suppose $h, h'\in V(\infty)$ with $\phi(h)=\phi(h')$. Let $h$ be of the form  (\ref{formh2}) and $h'$ of the form
\begin{equation*}
h'(x) =\Lambda_{V_2(c_J)}(-\frac{1}{2}(c_J\bo P_2(c_J)x+P_2(c_J)x\bo c_J)-\sum_{j\in J}\beta_j (c_j\bo c_j)).
\end{equation*}
Then we have
\[
\frac{\sum_{i\in I} \exp(-\alpha_i)e_i}{\sum_{i\in I} \exp(-\alpha_i)} = \frac{\sum_{j\in J} \exp(-\beta_j)c_j}{\sum_{j\in J} \exp(-\beta_j)}
\]
where the coefficients of the minimal tripotents on both sides are strictly positive.
By relabelling the indices, we may assume that $I= \{1,\ldots,p\}$ and $0=\alpha_1\leq\alpha_2\leq\ldots\leq\alpha_p$. Likewise we can assume that $J=\{1,\ldots,q\}$ and  $0=\beta_1\leq\beta_2\leq\ldots\leq\beta_q$. Since the norm of both sides above  in $V$ are equal,
we have
$$\sum_{i\in I} \exp(-\alpha_i) = \sum_{j\in J} \exp(-\beta_j).$$

By Remark \ref{uuni}, we have $p=q$, $e^{-\alpha_i}=e^{-\beta_i}$,
$e_I=e_1 + \cdots + e_p=c_1+ \cdots + c_q =c_J$
and
$\sum_i \alpha_ie_i = \sum \beta_i c_i.$
Hence $h=h'$ by Corollary \ref{h=h'}.
\end{proof}

\begin{lemma} If $(a_k)$ in $V$ is such that $h_{a_k}\to h\in V(\infty)$, then $\phi(a_k) \to \phi(h)$.
\end{lemma}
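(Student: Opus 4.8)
The plan is to reduce the convergence $\phi(a_k)\to\phi(h)$ to a direct computation along convergent subsequences of the spectral data of the $a_k$, and then to match that limit with the defining formula (\ref{phi2}) for $\phi(h)$. First I would note that, since $h_{a_k}\to h\in V(\infty)$, Lemma \ref{Rieffel} gives $r_k=\|a_k\|\to\infty$. Because the metric compactification of the separable space $V$ is metrizable and $D^\circ$ lies in the finite dimensional space $V$, it suffices to show that every subsequence of $(a_k)$ admits a further subsequence $(a_n)$ with $\phi(a_n)\to\phi(h)$. So I would fix a subsequence, write the spectral decomposition $a_n=\sum_{i=1}^r\lambda_{in}e_{in}$ with $\lambda_{1n}\ge\cdots\ge\lambda_{rn}\ge0$, and pass to a further subsequence (relabelled $(a_n)$) along which $e_{in}\to e_i$ (so that $e_1,\dots,e_r$ are mutually orthogonal minimal tripotents) and $\alpha_{in}=r_n-\lambda_{in}\to\alpha_i\in[0,\infty]$ for each $i$. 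Setting $I=\{i:\alpha_i<\infty\}$, one has $\alpha_{1n}=0$, hence $1\in I$ and $\min_{i\in I}\alpha_i=0$.

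Next I would identify the limit horofunction. By the proof of Theorem \ref{formofh} (through Lemma \ref{techlem}), $h_{a_n}$ converges to the horofunction $h'$ built from the data $(I,\{e_i\}_{i\in I},\{\alpha_i\}_{i\in I})$, so that by (\ref{phi2})
\[
\phi(h')=\frac{\sum_{i\in I}e^{-\alpha_i}e_i}{\sum_{i\in I}e^{-\alpha_i}}.
\]
Since $(a_n)$ is a subsequence of $(a_k)$ and $h_{a_k}\to h$, uniqueness of limits in the Hausdorff space $\overline V$ forces $h'=h$, and therefore $\phi(h')=\phi(h)$. It then remains to compute $\lim_n\phi(a_n)$ directly. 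Using $\lambda_{in}=r_n-\alpha_{in}$ and factoring $e^{r_n}$ out of numerator and denominator in (\ref{phi1}), I would write
\[
\phi(a_n)=\frac{\sum_{i=1}^r\big(e^{-\alpha_{in}}-e^{-2r_n+\alpha_{in}}\big)e_{in}}{\sum_{i=1}^r\big(e^{-\alpha_{in}}+e^{-2r_n+\alpha_{in}}\big)}.
\]
Since $0\le\alpha_{in}\le r_n$ gives $-2r_n+\alpha_{in}\le-r_n\to-\infty$, each term $e^{-2r_n+\alpha_{in}}$ tends to $0$; moreover $e^{-\alpha_{in}}\to e^{-\alpha_i}$ (with the convention $e^{-\infty}=0$, so the indices $i\notin I$ drop out) and $e_{in}\to e_i$, while $\alpha_{1n}=0$ keeps the denominator bounded below by $1$. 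Passing to the limit then gives $\phi(a_n)\to\sum_{i\in I}e^{-\alpha_i}e_i\big/\sum_{i\in I}e^{-\alpha_i}=\phi(h')=\phi(h)$, completing the subsequence argument.

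I expect the step requiring the most care to be the matching: recognising that the spectral data extracted along the subsequence describe \emph{exactly} the limit horofunction $h'=\lim_n h_{a_n}$, so that applying $\phi$ to this data reproduces $\phi(h)$ by uniqueness of horofunction limits. The algebraic limit of $\phi(a_n)$ is then routine once $e^{r_n}$ is factored out, the only subtlety being that the tripotents $e_{in}$ with $\alpha_{in}\to\infty$ carry coefficients $e^{-\alpha_{in}}\to0$ and hence disappear in the limit, leaving precisely the sum over $I$.
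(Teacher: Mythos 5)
Your proof is correct and takes essentially the same route as the paper's: reduce to subsequences, extract convergent spectral data $(e_{in}\to e_i$, $\alpha_{in}\to\alpha_i)$, identify $h'=\lim_n h_{a_n}$ via Lemma \ref{techlem} and Theorem \ref{formofh}, conclude $h'=h$ by uniqueness of limits, and compute $\lim_n\phi(a_n)$ by factoring out the exponential of the norm. The only cosmetic difference is that you appeal to the well-definedness of $\phi$ on $V(\infty)$ to get $\phi(h')=\phi(h)$, where the paper explicitly re-applies Corollary \ref{h=h'} and Remark \ref{uuni} to match the two sets of spectral data --- which is precisely the mechanism underlying that well-definedness.
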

\begin{proof}
Let $h$ be given by  (\ref{formh2}).
To show that  $\phi(a_k) \to \phi(h)$, we show that each subsequence of  $(\phi(a_k))$ has a convergent subsequence with limit $\phi(h)$.
So let $(\phi(a_m))$ be a subsequence.  Using the spectral decomposition we write $a_m =\sum_{i=1}^r \mu_{im}c_{im}$ with $\mu_{1m}\geq \ldots\geq \mu_{rm}\geq 0$.  As $h$ is a horofunction, $\mu_{1m}=\|a_{m}\|\to\infty$ by Lemma \ref{Rieffel}.

After taking a subsequence we may assume that $\beta_{im} = \mu_{1m}-\mu_{im}\to \beta_i\in [0,\infty]$ and $c_{im}\to c_i$ for all $i$.
Let $J=\{i\colon \beta_i<\infty\}=\{1,\ldots, q\}$ and note that $q\geq1$, as $\beta_1=0$.

It follows from Lemma \ref{techlem} that $h_{a_m}\to h'$ where
\[
h'(x) =\Lambda_{V_2(c_J)}(-\frac{1}{2}(c_J\bo P_2(c_J)x+P_2(c_J)x\bo c_J)-\sum_{j\in J}\beta_j (c_j\bo c_j))\qquad (x\in V),
\]
and $h' \in V(\infty)$ by Theorem \ref{formofh}. As $h_{a_k}\to h$, we know that $h'=h$, and hence $e_I=c_J$ and
\[
\sum_{i\in I}\alpha_i e_i =\sum_{j\in J} \beta_jc_j
\]
by Corollary \ref{h=h'}.

We can relabel the $\alpha_i$'s such that $I=\{1,\ldots,p\}$ and $0=\alpha_1\leq\alpha_2\leq\ldots\leq\alpha_p$. It follows from Remark \ref{uuni} that $p=q$ and $\alpha_i =\beta_i$ for all $i\in \{1,\ldots,p\}$. Moreover, $\sum_{i=1}^p e^{-\alpha_i}e_i  =\sum_{i=1}^p e^{-\beta_i}c_i$.

As
\[
\phi(a_m) = \frac{\sum_{i=1}^r (e^{\mu_{im}} - e^{-\mu_{im}})c_{im}}{\sum_{i=1}^r e^{\mu_{im}} - e^{-\mu_{im}}}
= \frac{\sum_{i=1}^r (e^{-\beta_{im}} - e^{-\mu_{1m}-\mu_{im}})c_{im}}{\sum_{i=1}^r e^{-\beta_{im}} - e^{-\mu_{1m}-\mu_{im}}},
\]
we find  that
\[
\lim_{m} \phi(a_m) = (\sum_{i=1}^p e^{-\beta_i})^{-1}\sum_{i=1}^p e^{ -\beta_i}c_i = (\sum_{i=1}^p  e^{-\alpha_i})^{-1}\sum_{i=1}^p e^{-\alpha_i}e_i.
\]
Thus,  $ \phi(a_m)\to \phi(h)$, which completes the proof.
 \end{proof}

To prove continuity of $\phi$ on the boundary $V(\infty)$, we need the following technical lemma.

\begin{lemma} \label{lem:5.17}
Let $J\subset \{1,\ldots,r\}$ be nonempty, and for each $n$, let $\{c_{jn}\colon j\in J\}$ be a collection of mutually orthogonal minimal  tripotents in $V$ such that  $c_{jn}\to c_j$ for all $j\in J$. For $j\in J$ let  $(\beta_{jn})_n$ a sequence in $[0, \infty)$ converging to $\beta_j\in [0,\infty]$, with $\min_{j\in J}\beta_{jn} =0$ for each $n$.  If we let $J'= \{j\in J\colon \beta_j<\infty\}\neq \emptyset$ and consider the horofunctions,
\[
h_n(x) = \Lambda_{V_2(c_{Jn})}(-\frac{1}{2}(c_{Jn}\bo P_2(c_{Jn})x+P_2(c_{Jn})x\bo c_{Jn})-\sum_{j\in J}\beta_{jn} (c_{jn}\bo c_{jn})) \qquad (x\in V),
\]
 where $c_{Jn} = \sum_{i\in J} c_{jn}$, then
  \begin{equation}\label{eq:5.17.0}
\lim_{n\to\infty} h_n(x) =
 \Lambda_{V_2(c_{J'})}(-\frac{1}{2}(c_{J'}\bo P_2(c_{J'})x+P_2(c_{J'})x\bo c_{J'})-\sum_{j\in J'}\beta_{j} (c_j\bo c_j))
  \end{equation}
with $c_{J'} = \sum_{j\in J'} c_j$.
\end{lemma}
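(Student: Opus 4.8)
The plan is to establish the two inequalities $\liminf_n h_n(x)\ge L$ and $\limsup_n h_n(x)\le L$, where $L$ denotes the right-hand side of \eqref{eq:5.17.0}. Using the remark following Lemma~\ref{techlem} to pass between the $\Lambda$-form and the supremum form, I would write
\[
h_n(x)=\sup\{\langle S_n u,u\rangle\colon u\in V_2(c_{Jn}),\ \langle u,u\rangle=1\},\quad S_n=-\tfrac12(c_{Jn}\bo x+x\bo c_{Jn})-\sum_{j\in J}\beta_{jn}(c_{jn}\bo c_{jn}),
\]
and $L=\sup\{\langle Su,u\rangle\colon u\in V_2(c_{J'}),\ \langle u,u\rangle=1\}$ with $S=-\tfrac12(c_{J'}\bo x+x\bo c_{J'})-\sum_{j\in J'}\beta_j(c_j\bo c_j)$. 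Since the $c_{jn}$ $(j\in J)$ are mutually orthogonal minimal tripotents with $c_{jn}\to c_j$, the limits $c_j$ are again mutually orthogonal minimal tripotents, so $c_{Jn}\to c_J$ and $c_{J'n}:=\sum_{j\in J'}c_{jn}\to c_{J'}$, and by Remark~\ref{pij} the relevant Peirce projections converge in operator norm; I use these facts freely.

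The one ingredient that makes everything fit is the following Peirce computation: for a tripotent $c$, any $d\in V_0(c)$, $w\in V_2(c)$ and any $x\in V$ one has $\{x,d,w\}=0$. Indeed, decomposing $x$ into its Peirce components for $c$, the $V_2(c)$ and $V_1(c)$ parts vanish by \eqref{eq Peirce arit 2} and the grading rule, while $\{P_0(c)x,d,w\}=\{w,d,P_0(c)x\}$ by symmetry in the outer variables, which is again killed by \eqref{eq Peirce arit 2}. This has two consequences. First, taking $c=c_{J'}$ and $d=c_J-c_{J'}=\sum_{j\in J\setminus J'}c_j\in V_0(c_{J'})$ gives, for every $w\in V_2(c_{J'})$, that $\langle(x\bo c_J)w,w\rangle=\langle(x\bo c_{J'})w,w\rangle$ and, by associativity of $\langle\cdot,\cdot\rangle$, also $\langle(c_J\bo x)w,w\rangle=\langle(c_{J'}\bo x)w,w\rangle$. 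Second, taking $c=c_{J'n}$, $d=x=c_{jn}$ with $j\in J\setminus J'$ shows $(c_{jn}\bo c_{jn})v=0$ for every $v\in V_2(c_{J'n})$, so the divergent penalty terms annihilate vectors lifted from $V_2(c_{J'n})$.

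For the lower bound I would pick a unit maximiser $v\in V_2(c_{J'})$ for $L$ and lift it via $v^n:=P_2(c_{J'n})v$. Then $v^n\to v$, and $v^n\in V_2(c_{J'n})\subseteq V_2(c_{Jn})$ by \eqref{Peirce-2 of sum}, so $v^n$ is an admissible test vector and $h_n(x)\ge\langle S_n v^n,v^n\rangle\langle v^n,v^n\rangle^{-1}$. By the second consequence the terms $\beta_{jn}(c_{jn}\bo c_{jn})v^n$ vanish identically for $j\in J\setminus J'$, the terms for $j\in J'$ converge to $\beta_j\langle(c_j\bo c_j)v,v\rangle$, and the linear part converges; using the first consequence to replace $c_J$ by $c_{J'}$ in the limit, the right-hand side tends to $\langle Sv,v\rangle=L$, whence $\liminf_n h_n(x)\ge L$.

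For the upper bound I would pass to a subsequence realising $\limsup_n h_n(x)$ and, by compactness of the unit sphere, to a further subsequence along which the maximisers $w^n\in V_2(c_{Jn})$ converge to a unit vector $w$; since $P_2(c_{Jn})w^n=w^n$ and $P_2(c_{Jn})\to P_2(c_J)$, we get $w\in V_2(c_J)$. Each $h_n$ is a horofunction by Theorem~\ref{formofh}, so $h_n(x)\ge-\|x\|$; as every penalty term is nonnegative and $\beta_{jn}\to\infty$ for $j\in J\setminus J'$, this forces $\langle(c_j\bo c_j)w,w\rangle=0$ for such $j$. Because $c_j\bo c_j$ is positive self-adjoint with kernel $V_0(c_j)$, this gives $w\in\bigcap_{j\in J\setminus J'}V_0(c_j)$, and together with $w\in V_2(c_J)$ the joint Peirce calculus (via \eqref{Peirce-2 of sum}) yields $w\in V_2(c_{J'})$. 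Dropping the nonnegative penalties for $j\in J\setminus J'$, passing to the limit, and using the first consequence to replace $c_J$ by $c_{J'}$, I obtain $\limsup_n h_n(x)\le\langle Sw,w\rangle\le L$, and the two inequalities give \eqref{eq:5.17.0}. The main obstacle is precisely the interaction between the moving domains $V_2(c_{Jn})$ and the runaway coefficients $\beta_{jn}\to\infty$; the Peirce identity above is what tames both, annihilating the divergent penalties on lifted vectors in the lower bound while the induced $-\infty$ penalty drives the limiting maximiser into $V_2(c_{J'})$ in the upper bound.
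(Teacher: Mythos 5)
Your proof is correct and follows essentially the same route as the paper's: both arguments extract convergent unit maximisers $w^n\to w$, use the horofunction bound $h_n(x)\geq -\|x\|$ together with the divergent penalties $\beta_{jn}\to\infty$ to force $w\in V_2(c_{J'})$, lift test vectors into $V_2(c_{Jn})$ via $P_2(c_{J'n})$ for the lower bound, and use Peirce calculus to replace $c_J$ by $c_{J'}$ against vectors of $V_2(c_{J'})$. The differences are cosmetic: you work with the supremum form in $x$ rather than $P_2(c_{Jn})x$, justify the swap $c_J\leftrightarrow c_{J'}$ through the identity $\{x,d,w\}=0$ for $d\in V_0(c)$, $w\in V_2(c)$ (plus associativity of the trace form) instead of the paper's projection identity $P_2(e)\{v,y,z\}=\{P_2(e)v,P_2(e)y,z\}$, and detect $w\in V_2(c_{J'})$ via the kernel of $c_j\bo c_j$ rather than via vanishing joint Peirce components.
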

\begin{proof} We show that each subsequence of $(h_n(x))$ has a convergent subsequence with limit the right-hand side of (\ref{eq:5.17.0}).
Pick a subsequence $(h_{k}(x))$. As $\{u\in V\colon \langle u,u\rangle\ =1\}$ is compact, there exists $w^k\in V_2(c_{Jk})$ with
$\langle w^k,w^k\rangle =1$ and
   \[
h_{k}(x)
=
\langle (-\frac{1}{2}(c_{Jk}\bo P_2(c_{Jk})x+P_2(c_{Jk})x\bo c_{Jk})-\sum_{j\in J}\beta_{jk} (c_{jk}\bo c_{jk}))w^k,w^k\rangle.
   \]
    Taking a subsequence, we may assume that $w^k\to w$. For each $k$, let
\begin{equation}\label{w}
V = \bigoplus_{0\leq s\leq t; s,t\in J} V_{st}^k
\end{equation}
be the Peirce decomposition of $V$ with respect to the tripotents $\{c_{jk}\colon j\in J\}$, and let
$$  V = \bigoplus_{0\leq s\leq t; s,t\in J} V_{st}$$
be the decomposition with respect to the tripotents $\{c_j\colon j\in J\}$.  We show

 \begin{equation}\label{eq:5.17.1}
 w\in\bigoplus_{s,t\in J'\colon s\leq t} V_{st} = V_2(c_{J'}).
\end{equation}
Let $w_{st}^k \in V_{st}^k$ be the $(s,t)$-component of $w^k$ in the Peirce decomposition  in (\ref{w}).
 Then we have $w_{st}^k \to w_{st}\in V_{st}$ and
\[
-\frac{1}{2}(c_{Jk}\bo P_2(c_{Jk})x+P_2(c_{Jk})x\bo c_{Jk})w_{st}^k \to -\frac{1}{2}(c_{J}\bo P_2(c_{J})x+P_2(c_{J})x\bo c_{J})w_{st}.
\]
Moreover,
\[
\sum_{j\in J} \beta_{jk} (c_{jk}\bo c_{jk}) w^k_{st} = \sum_{j\in J} \left( \frac{\delta_{sj}\beta_{jk} +\delta_{tj}\beta_{jk}}{2}\right) w^k_{st}.
\]
Recall that if $s,t\in J$ with $\{s,t\}\not\subset J'$, then $\beta_{sk}\to\infty$ or $\beta_{tk}\to\infty$. As $h_k(x)\geq -\|x\|$ for all $k$ and $\beta_{jk}\geq 0$ for all $j\in J$, we find that $w^k_{st}\to 0$ for all $s,t\in J$ with $\{s,t\}\not\subset J'$. This implies that (\ref{eq:5.17.1}) holds.

Next, we  make use of the following fact. Let $e$ be a tripotent. If $v\in V_2(e)\cup V_0(e)$, $y\in V$ and $z\in V_2(e)$, then
$P_2(e)\{v,y,z\} =\{P_2(e)v,P_2(e)y,z\}$ and $P_2(e)\{y,v,z\} =\{P_2(e)y,P_2(e)v,z\}$.
In particular,
\begin{eqnarray*}
\langle -\frac{1}{2}(c_{J}\bo P_2(c_{J})x+P_2(c_{J})x\bo c_{J})w,w\rangle
 &=& \langle -\frac{1}{2}(c_{J'}\bo P_2(c_{J'})P_2(c_{J})x+P_2(c_{J'})P_2(c_{J})x\bo c_{J'})w,w\rangle\\
 & = & \langle -\frac{1}{2}(c_{J'}\bo P_2(c_{J'})x+P_2(c_{J'})x\bo c_{J'})w,w\rangle,
 \end{eqnarray*}
 as $P_2(c_{J'})P_2(c_{J}) = P_2(c_{J'})$.

 It follows that
 \begin{eqnarray*}
 \limsup_{k\to\infty} h_{k}(x) & = &
 \limsup_{k\to\infty} \langle (-\frac{1}{2}(c_{Jk}\bo P_2(c_{Jk})x+P_2(c_{Jk})x\bo c_{Jk})-\sum_{j\in J}\beta_{jk} (c_{jk}\bo c_{jk}))w^k,w^k\rangle\\
  & \leq & \lim_{k\to\infty}
  \langle (-\frac{1}{2}(c_{Jk}\bo P_2(c_{Jk})x+P_2(c_{Jk})x\bo c_{Jk})-\sum_{j\in J'}\beta_{jk} (c_{jk}\bo c_{jk}))w^k,w^k\rangle\\
   & = & \langle (-\frac{1}{2}(c_{J'}\bo P_2(c_{J'})x+P_2(c_{J'})x\bo c_{J'} - \sum_{j\in J'}\beta_{j} (c_j\bo c_j))w,w\rangle.
 \end{eqnarray*}

On the other hand,
\begin{multline*}
\liminf_{k\to\infty} h_{k}(x) \geq
 \liminf_{k\to\infty} \langle (-\frac{1}{2}(c_{Jk}\bo P_2(c_{Jk})x+P_2(c_{Jk})x\bo c_{Jk})\\
 -\sum_{j\in J}\beta_{jk} (c_{jk}\bo c_{jk}))P_2(c_{J'k})w^k,P_2(c_{J'k})w^k\rangle\langle P_2(c_{J'k})w^k,P_2(c_{J'k})w^k\rangle^{-1}\\     \to \langle (-\frac{1}{2}(c_{J'}\bo P_2(c_{J'})x+P_2(c_{J'})x\bo c_{J'} - \sum_{j\in J'}\beta_{j} (c_j\bo c_j))w,w\rangle.
\end{multline*}
Thus,
\[
\lim_{k\to\infty} h_{k}(x) = \langle (-\frac{1}{2}(c_{J'}\bo P_2(c_{J'})x+P_2(c_{J'})x\bo c_{J'} - \sum_{j\in J'}\beta_{j} (c_j\bo c_j))w,w\rangle.
\]
Since $w\in V_2(c_{J'})$, we find  that
\[
\lim_{k\to\infty} h_{k}(x)\leq  \Lambda_{V_2(c_{J'})}(-\frac{1}{2}(c_{J'}\bo P_2(c_{J'})x+P_2(c_{J'})x\bo c_{J'})-\sum_{j\in J'}\beta_{j} (c_j\bo c_j)).
\]

To show that this is an equality, let $u\in V_2(c_{J'})$ be such that
\begin{multline*}
\Lambda_{V_2(c_{J'})}(-\frac{1}{2}(c_{J'}\bo P_2(c_{J'})x+P_2(c_{J'})x\bo c_{J'})-\sum_{j\in J'}\beta_{j} (c_j\bo c_j))\\
 =\langle  (-\frac{1}{2}(c_{J'}\bo P_2(c_{J'})x+P_2(c_{J'})x\bo c_{J'})-\sum_{j\in J'}\beta_{j} (c_j\bo c_j))u,u\rangle.
 \end{multline*}
 As $P_2(c_{J'k})u\in V_2(c_{Jk})$, we have that
 \begin{multline*}
  \langle (-\frac{1}{2}(c_{Jk}\bo P_2(c_{Jk})x+P_2(c_{Jk})x\bo c_{Jk})-\sum_{j\in J}\beta_{jk} (c_{jk}\bo c_{jk}))w^k,w^k\rangle \\
  \geq
   \langle (-\frac{1}{2}(c_{Jk}\bo P_2(c_{Jk})x+P_2(c_{Jk})x\bo c_{Jk})\\
   -\sum_{j\in J}\beta_{jk} (c_{jk}\bo c_{jk}))P_2(c_{J'k})u,P_2(c_{J'k})u\rangle \langle P_2(c_{J'k})u,P_2(c_{J'k})u\rangle^{-1}.
\end{multline*}
Note that, as $\langle P_2(c_{J'k})u,P_2(c_{J'k})u\rangle\to \langle u,u\rangle =1$, the right-hand side is defined for all $k$ large.

So,
 \begin{multline*}
   \langle (-\frac{1}{2}(c_{Jk}\bo P_2(c_{Jk})x+P_2(c_{Jk})x\bo c_{Jk})
   -\sum_{j\in J}\beta_{jk} (c_{jk}\bo c_{jk}))P_2(c_{J'k})u,P_2(c_{J'k})u\rangle\\
    =  \langle (-\frac{1}{2}(c_{J'k}\bo P_2(c_{J'k})x+P_2(c_{J'k})x\bo c_{J'k})
   -\sum_{j\in J'}\beta_{jk} (c_{jk}\bo c_{jk}))P_2(c_{J'k})u,P_2(c_{J'k})u\rangle \\
   \to  \langle (-\frac{1}{2}(c_{J'}\bo P_2(c_{J'})x+P_2(c_{J'})x\bo c_{J'})
   -\sum_{j\in J'}\beta_{j} (c_j\bo c_j))u,u\rangle,
\end{multline*}
which proves (\ref{eq:5.17.0}).
\end{proof}

\begin{lemma}\label{ctinf} The map $\phi\colon V \cup V(\infty) \to {D}^\circ$ is continuous on $V(\infty)$.
\end{lemma}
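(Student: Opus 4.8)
The plan is to use that $V(\infty)$ is metrizable—which holds because $V$ is finite dimensional, hence separable, so the pointwise topology on $\mathrm{Lip}^1_0(V)$ is metrizable—and therefore to check continuity of $\phi$ on $V(\infty)$ along sequences. Let $(h_n)$ be a sequence in $V(\infty)$ with $h_n\to h\in V(\infty)$; the goal is $\phi(h_n)\to\phi(h)$. As usual, it suffices to show that every subsequence of $(h_n)$ has a further subsequence along which $\phi(h_n)\to\phi(h)$, so I pass freely to subsequences throughout.

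First I would normalise the data describing the $h_n$. Each $h_n$ has the form (\ref{formh2}) for some nonempty $J_n\subset\{1,\dots,r\}$, mutually orthogonal minimal tripotents $\{c_{jn}:j\in J_n\}$, and coefficients $\beta_{jn}\geq 0$ with $\min_{j\in J_n}\beta_{jn}=0$. Since $\{1,\dots,r\}$ has only finitely many subsets, after a subsequence I may assume $|J_n|$ is constant and, relabelling, that $J_n=J$ is a fixed index set. Compactness of the set of minimal tripotents and of $[0,\infty]$ then yields, along a further subsequence, $c_{jn}\to c_j$ and $\beta_{jn}\to\beta_j\in[0,\infty]$ for each $j\in J$; the limits $c_j$ are again mutually orthogonal minimal tripotents, by closedness of the minimal tripotents and continuity of the box operator. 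A pigeonhole argument on the finitely many indices realising the minimum $0$ lets me assume a fixed $j_0\in J$ has $\beta_{j_0n}=0$ for all $n$, hence $\beta_{j_0}=0$. Setting $J'=\{j\in J:\beta_j<\infty\}$, we then have $j_0\in J'$, so $J'\neq\emptyset$ and $\min_{j\in J'}\beta_j=0$.

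With the data aligned this way, Lemma \ref{lem:5.17} applies verbatim and gives
$$\lim_{n}h_n(x)=\Lambda_{V_2(c_{J'})}\Big(-\tfrac{1}{2}(c_{J'}\bo P_2(c_{J'})x+P_2(c_{J'})x\bo c_{J'})-\sum_{j\in J'}\beta_j(c_j\bo c_j)\Big)=:\widetilde h(x)$$
for all $x\in V$. Since convergence in $V(\infty)$ is pointwise convergence, $h_n\to h$ forces $\widetilde h=h$. On the other hand, reading $\phi(h_n)$ directly off (\ref{phi2}) from its data $(J,\{c_{jn}\},\{\beta_{jn}\})$,
$$\phi(h_n)=\frac{\sum_{j\in J}e^{-\beta_{jn}}c_{jn}}{\sum_{j\in J}e^{-\beta_{jn}}}\longrightarrow \frac{\sum_{j\in J'}e^{-\beta_j}c_j}{\sum_{j\in J'}e^{-\beta_j}},$$
using $e^{-\beta_{jn}}\to e^{-\beta_j}$ and that the terms with $\beta_j=\infty$ contribute $0$ (the denominator stays bounded away from $0$ since $j_0\in J'$). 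The right-hand limit is exactly $\phi(\widetilde h)$ by (\ref{phi2}), because $\widetilde h$ is represented by the canonical data $(J',\{c_j\},\{\beta_j\})$ with $\min_{j\in J'}\beta_j=0$. As $\phi$ is well defined on $V(\infty)$ and $\widetilde h=h$, we get $\phi(\widetilde h)=\phi(h)$, so $\phi(h_n)\to\phi(h)$ along the subsequence, which completes the argument.

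I expect the main obstacle to be bookkeeping rather than conceptual: carefully normalising the varying index sets $J_n$ and tripotent labels so that a single application of Lemma \ref{lem:5.17} is legitimate, and checking that the resulting $\widetilde h$ is presented in the canonical form ($\min_{j\in J'}\beta_j=0$) required for $\phi$ to read off the correct value. Once the data is aligned, the identification $\widetilde h=h$ via pointwise convergence, together with the well-definedness of $\phi$ (ultimately Corollary \ref{h=h'}), does the real work.
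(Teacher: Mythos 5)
Your proof is correct and follows essentially the same route as the paper's: pass to subsequences so the index set is fixed and the tripotents $c_{jn}$ and coefficients $\beta_{jn}$ converge, apply Lemma \ref{lem:5.17} to identify the pointwise limit, use uniqueness of pointwise limits to conclude it equals $h$, and then read off the limit of $\phi(h_n)$. The only cosmetic difference is the last step, where you appeal to the well-definedness of $\phi$ (which itself rests on Corollary \ref{h=h'} and Remark \ref{uuni}) rather than invoking those two results directly as the paper does.
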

\begin{proof}
Let $h_n\to h$ in $V(\infty)$.
We show that each subsequence  $(\phi(h_{k}))$ of $(\phi(h_{n}))$ has a convergent subsequence with limit $\phi(h)$.
As $h_n$ is horofunction,  we can express it as
\[
h_n(x) = \Lambda_{V_2(c_{J_nn})}(-\frac{1}{2}(c_{J_nn}\bo P_2(c_{J_nn})x+P_2(c_{J_nn})x\bo c_{J_nn})-\sum_{j\in J_n}\beta_{jn} (c_{jn}\bo c_{jn}))\qquad (x\in V).
\]
After taking a successive subsequences we may assume that $J_{k} =J$ for all $k$,  $c_{jk} \to c_j$ and $\beta_{jk}\to\beta_j\in[0,\infty]$ for all $j\in J$. Note that $\min_{j\in J} \beta_j =0$, as  $\min_{j\in J} \beta_{jk} =0$ for all $k$.

Now let $J'= \{j\in J\colon \beta_j<\infty\}$. By Lemma \ref{lem:5.17}, we find  that $h_k(x) \to h'(x)$ for each $x\in V$, where
\[
h'(x) =  \Lambda_{V_2(c_{J'})}(-\frac{1}{2}(c_{J'}\bo P_2(c_{J'})x+P_2(c_{J'})x\bo c_{J'})-\sum_{j\in J'}\beta_{j} (c_j\bo c_j)).
\]
By Theorem \ref{formofh} we know that $h'$ is a horofunction.  As $h_n\to h$, we conclude that $h=h'$, and hence  $e_I = c_{J'}$ and $\sum_{i\in I}\alpha_i e_i = \sum_{j\in J'} \beta_jc_j$ by Corollary \ref{h=h'}.
This implies, by Remark \ref{uuni}, that
\[
\lim_{k\to\infty} h_{k}(x) = \lim_{k\to\infty}\frac{\sum_{j\in J} e^{-\beta_{jk}} c_{jk}}{\sum_{j\in J} e^{-\beta_{jk}} }=
\frac{\sum_{j\in J'} e^{-\beta_j }c_j}{\sum_{j\in J'} e^{-\beta_j} }= \frac{\sum_{i\in I} e^{-\alpha_i}e_i}{\sum_{i\in I} e^{-\alpha_i}}  = \phi(h)
\]
and hence we are done.
\end{proof}

Collecting the results it now easy to show that $\phi$ is a homeomorphism.
\begin{proof}[Proof of Theorem \ref{homeo}]
Note that $\phi$ is continuous on $V\cup V(\infty)$ by Lemmas \ref{cty} and \ref{ctinf}. Moreover, $\phi$ is a bijection by Lemmas \ref{=}, \ref{sur2}, and \ref{inj}.  As $V\cup V(\infty)$ is compact and $D^\circ$ is Hausdorff, we conclude that $\phi$ is a homeomorphism.
\end{proof}

\section{Geometry of $V\cup V(\infty)$} \label{GeomComp}

We now analyse the geometry of the metric compactification of $V$. Recall that on $V(\infty)$ there is a natural equivalence relation,   $h\sim g$ if $\sup_{x\in V} |h(x)-g(x)|<\infty$.  In this section, we show that the partition of $V(\infty)$ into equivalence classes is closely related to the geometry of $D^\circ$  (and hence also to $D^*$). In fact, we prove that the homeomorphism $\phi\colon V\cup V(\infty)\to D^\circ$ given in (\ref{phi1}) and (\ref{phi2}) maps each equivalence class  onto the relative interior of a boundary face of $D^\circ$.

For the basic terminology from convex analysis we follow \cite{Rock}.  If $C\subseteq V$ is convex, then $F\subseteq C$ is called
a {\em face} if $\lambda x+(1-\lambda)y\in F$ for some $0<\lambda<1$ and $x,y\in C$ implies that $x,y\in F$.  Note that the empty set and $C$ are both faces of $C$, and each face is convex. The {\em relative interior} of a face $F$, denoted $\mathrm{ri}\, F$, is the interior of $F$ regarded as a subset of the affine hull of $F$.   It is well known that each nonempty convex set $C$ is partitioned by the relative interiors of its nonempty faces, see \cite[Theorem 18.2]{Rock}.

To analyse the equivalence classes it is useful to recall  that for Busemann points $h,g\in V(\infty)$ one has that $h\sim g$ if and only if $\delta(h,g)<\infty$, see \cite[Proposition 4.5]{Wa1}. As each horofunction is a Busemann point by Corollary \ref{buse}, we see that the equivalence classes coincide with the parts of $V(\infty)$. Therefore we start by analysing the parts.

Using (\ref{detourcost}) and Remark \ref{rm}  we find for $h,h'\in V(\infty)$ that
\begin{equation}\label{H(h,h')}
H(h,h') = \lim_{t\to\infty} \|\psi(t)\|+h'(\psi(t)),
\end{equation}
where
\begin{equation}\label{formh21}
h(x) =\Lambda_{V_2(e_I)}(-\frac{1}{2}(e_I\bo P_2(e_I)x+P_2(e_I)x\bo e_I)-\sum_{i\in I}\alpha_i (e_i\bo e_i)) \quad (x\in V)
\end{equation}
and $\psi(t) = te_I -\sum_{i\in I}\alpha_i e_i$.
Likewise,  $H(h',h) = \lim_{t\to\infty} \|\psi' (t)\|+h(\psi'(t))$, where
\begin{equation}\label{formh3}
h'(x) =\Lambda_{V_2(c_J)}(-\frac{1}{2}(c_J\bo P_2(c_J)x+P_2(c_J)x\bo c_J)-\sum_{j\in J}\beta_j (c_j\bo c_j)) \quad (x\in V)
\end{equation}
and $\psi'(t) = tc_J -\sum_{j\in J}\beta_j c_j$.

\begin{lemma}\label{eicj1}
Let $h$ and $h'$ be given by (\ref{formh21}) and (\ref{formh3}). If  $e_I=c_J$,
then
\[
H(h,h')  =  \sup_{u\in V_2(e_I)\colon \langle u,u\rangle =1} \langle ((a-b)\bo e_I)u,u\rangle
\]
and
\[
H(h',h) =  \sup_{u\in V_2(e_I)\colon \langle u,u\rangle =1} \langle ((b-a)\bo e_I)u,u\rangle.
\]
where $a=\sum_{i\in I}\alpha_i e_i $ and $b=\sum_{j\in J} \beta_jc_j$.
\end{lemma}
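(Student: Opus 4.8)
The plan is to compute $H(h,h')$ directly from the detour-cost formula (\ref{H(h,h')}),
$$H(h,h') = \lim_{t\to\infty}\|\psi(t)\| + h'(\psi(t)), \qquad \psi(t) = te_I - a,$$
with $a = \sum_{i\in I}\alpha_i e_i$. First I would record that $\|\psi(t)\| = t$ for all large $t$: writing $\psi(t) = \sum_{i\in I}(t-\alpha_i)e_i$, for $t > \max_{i\in I}\alpha_i$ this is a spectral decomposition with positive coefficients, so by the orthogonality of the $e_i$ its norm is $\max_{i\in I}(t-\alpha_i) = t - \min_{i\in I}\alpha_i = t$, using $\min_{i\in I}\alpha_i = 0$. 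The whole limit then hinges on the behaviour of $h'(\psi(t))$.

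To evaluate $h'(\psi(t))$ I would use the hypothesis $e_I = c_J$. Since $\psi(t) \in V_2(e_I) = V_2(c_J)$, we have $P_2(c_J)\psi(t) = \psi(t)$, so (\ref{formh3}) reduces to
$$h'(\psi(t)) = \Lambda_{V_2(e_I)}\Big(-\tfrac{1}{2}\big(e_I\bo\psi(t)+\psi(t)\bo e_I\big) - \textstyle\sum_{j\in J}\beta_j(c_j\bo c_j)\Big).$$
On $V_2(e_I)$ I would then apply three simplifications: that $e_I\bo e_I$ restricts to the identity operator (the defining eigenvalue of the Peirce $2$-space); the identity $\sum_{j\in J}\beta_j(c_j\bo c_j) = \tfrac{1}{2}(e_I\bo b + b\bo e_I)$ with $b=\sum_{j\in J}\beta_j c_j$, which follows from mutual orthogonality of the $c_j$ together with $c_J = e_I$; and $\psi(t) = te_I - a$. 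Substituting, the operator inside $\Lambda_{V_2(e_I)}$ becomes $-tI + \tfrac{1}{2}\big(e_I\bo(a-b)+(a-b)\bo e_I\big)$. Since $\Lambda(-tI+T) = -t+\Lambda(T)$, this gives $h'(\psi(t)) = -t + \Lambda_{V_2(e_I)}\big(\tfrac{1}{2}(e_I\bo(a-b)+(a-b)\bo e_I)\big)$, and the term $-t$ cancels exactly against $\|\psi(t)\| = t$, leaving
$$H(h,h') = \Lambda_{V_2(e_I)}\Big(\tfrac{1}{2}\big(e_I\bo(a-b)+(a-b)\bo e_I\big)\Big).$$

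It then remains to replace the symmetrised operator by $(a-b)\bo e_I$. The point is that $w := a-b$ is self-adjoint in $V_2(e_I)$: each $e_i \leq e_I$ gives $\{e_I,e_i,e_I\} = e_i$ by (\ref{eq ordered trip}), so $a^* = a$ and $b^* = b$, hence $w^* = w$. A direct computation in the $\mathrm{JB}^*$-algebra $V_2(e_I)$ (with unit $e_I$, product and involution as in (\ref{jb*})) shows that for self-adjoint $w$ one has $\{w,e_I,v\} = w\circ v = \{e_I,w,v\}$ for every $v\in V_2(e_I)$, i.e.\ $e_I\bo w = w\bo e_I$ on $V_2(e_I)$; thus the symmetrisation coincides with $(a-b)\bo e_I$ and
$$H(h,h') = \sup_{u\in V_2(e_I)\colon \langle u,u\rangle =1}\langle((a-b)\bo e_I)u,u\rangle.$$
The formula for $H(h',h)$ follows by the symmetric argument, running the same computation with $\psi'(t) = te_I - b$ in place of $\psi(t)$ and $h$ in place of $h'$, which produces $(b-a)\bo e_I$. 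The hard part will be the bookkeeping in the middle step: one must track carefully which box operators are being restricted to $V_2(e_I)$ and confirm that each of the three simplifying identities genuinely holds there, since the relevant box operators do not commute in general and only behave as claimed after restriction to the Peirce $2$-space.
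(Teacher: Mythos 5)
Your proposal is correct and takes essentially the same route as the paper: both evaluate $H(h,h')=\lim_{t\to\infty}\|\psi(t)\|+h'(\psi(t))$ along the straight-line almost geodesic $\psi(t)=te_I-a$, use $\|\psi(t)\|=t$ together with the fact that $e_I\bo e_I$ acts as the identity on $V_2(e_I)=V_2(c_J)$ to cancel the $t$-terms, and then identify the surviving operator with $(a-b)\bo e_I$. The only difference is cosmetic and occurs in the last step: the paper obtains $\tfrac{1}{2}(e_I\bo a+a\bo e_I)=a\bo e_I$ and $\sum_{j\in J}\beta_j(c_j\bo c_j)=b\bo c_J$ directly from orthogonality of the tripotents, whereas you argue via self-adjointness of $a-b$ in the $\mathrm{JB}^*$-algebra $V_2(e_I)$; both are valid.
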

\begin{proof} Let $\psi(t) = te_I -\sum_{i\in I}\alpha_i e_i$.  For $t\geq 0$ large, $\|\psi(t)\|=t$, and hence
\begin{multline*}
\|\psi(t)\|+h'(\psi(t))  =  t + \sup_{u\in V_2(c_J)\colon \langle u,u\rangle =1} \langle -\frac{1}{2}(c_J\bo \psi(t) + \psi(t)\bo c_J)u - \sum_{j\in J}\beta_j (c_j\bo c_j)u,u\rangle\\
 =   \sup_{u\in V_2(c_J)\colon \langle u,u\rangle =1} \langle t(c_J\bo c_J)u-\frac{1}{2}(c_J\bo (te_I -a) + (te_I-a)\bo c_J)u - \sum_{j\in J}\beta_j (c_j\bo c_j)u,u\rangle\\
=   \sup_{u\in V_2(c_J)\colon \langle u,u\rangle =1} \langle \frac{1}{2}(e_I\bo a + a\bo e_I)u - \sum_{j\in J}\beta_j (c_j\bo c_j)u,u\rangle\\
  =   \sup_{u\in V_2(c_J)\colon \langle u,u\rangle =1} \langle (a\bo e_I)u -(b\bo c_J)u,u\rangle \\
 =   \sup_{u\in V_2(e_I)\colon \langle u,u\rangle =1} \langle ((a-b)\bo e_I)u,u\rangle.\\
\end{multline*}
So, by (\ref{H(h,h')})  the first equality holds. The second one is obtained by changing the roles of $h$ and $h'$.
\end{proof}

We use this lemma to give a simple criterion  for two horofunctions to be in the same part of $V(\infty)$.

\begin{theorem}\label{parts2} Two horofunctions $h$ and $h'$ given by (\ref{formh21}) and (\ref{formh3}), respectively, are in the same part
 of $V(\infty)$ if and only if $e_I =c_J$,
\end{theorem}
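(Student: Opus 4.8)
Since every horofunction in $V(\infty)$ is a Busemann point (Corollary \ref{buse}), $h$ and $h'$ lie in the same part precisely when the detour distance $\delta(h,h')=H(h,h')+H(h',h)$ is finite. The plan is therefore to decide finiteness of each detour cost and to establish the sharp characterisation that $H(h,h')<\infty$ if and only if $c_J\leq e_I$; interchanging the roles of $h$ and $h'$ will then give $H(h',h)<\infty$ if and only if $e_I\leq c_J$, and the statement follows from the antisymmetry of the tripotent order. (When $e_I=c_J$ one may instead quote Lemma \ref{eicj1}, which already exhibits both detour costs as finite suprema over the compact unit sphere of $V_2(e_I)$.)

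To compute $H(h,h')$ for arbitrary $e_I,c_J$, I would use $H(h,h')=\lim_{t\to\infty}(\|\psi(t)\|+h'(\psi(t)))$ with $\psi(t)=te_I-a$ and $a=\sum_{i\in I}\alpha_ie_i$, as furnished by (\ref{detourcost}) and Remark \ref{rm}. Writing $\psi(t)=\sum_{i\in I}(t-\alpha_i)e_i$ and using $\min_{i\in I}\alpha_i=0$, we get $\|\psi(t)\|=t$ for $t>\max_{i\in I}\alpha_i$. Substituting into (\ref{formh3}), expanding $P_2(c_J)\psi(t)=tP_2(c_J)e_I-P_2(c_J)a$, and collecting the terms linear in $t$, I expect to reach
\[
\|\psi(t)\|+h'(\psi(t))=\sup_{u\in V_2(c_J)\colon\langle u,u\rangle=1}\Big(\tfrac{t}{2}\langle Su,u\rangle+\langle Cu,u\rangle\Big),
\]
where $S=2I-(c_J\bo P_2(c_J)e_I+P_2(c_J)e_I\bo c_J)$ and $C=\tfrac12(c_J\bo P_2(c_J)a+P_2(c_J)a\bo c_J)-\sum_{j\in J}\beta_j(c_j\bo c_j)$ are self-adjoint operators on $V_2(c_J)$, and $S\geq 0$ by Remark \ref{+op}.

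The decisive step is a dichotomy governed by $S$. If $S\neq 0$, then its largest eigenvalue is strictly positive, and evaluating the bracket at a corresponding unit eigenvector forces it to $+\infty$, so $H(h,h')=\infty$; if $S=0$, the $t$-dependence cancels and $H(h,h')=\Lambda_{V_2(c_J)}(C)<\infty$. It then remains to match $S=0$ with the order relation. If $c_J\leq e_I$, then $P_2(c_J)e_I=c_J$ and $c_J\bo c_J$ is the identity on $V_2(c_J)$, so $S=0$; conversely $S=0$ yields $(c_J\bo P_2(c_J)e_I+P_2(c_J)e_I\bo c_J)(c_J)=2c_J$, which is condition (ii) of Lemma \ref{eicj5} and hence $c_J\leq e_I$ (here Remark \ref{+op} lets one pass from $\langle Sv,v\rangle=0$ for all $v\in V_2(c_J)$ to $S=0$). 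Thus $H(h,h')<\infty$ if and only if $c_J\leq e_I$.

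Combining the two detour costs, $\delta(h,h')<\infty$ holds if and only if both $c_J\leq e_I$ and $e_I\leq c_J$. To finish, I would invoke antisymmetry: setting $f=e_I-c_J$, these two relations make $f$ a tripotent with $f\bo c_J=0$ and $f\bo e_I=0$, whence $f\bo f=f\bo e_I-f\bo c_J=0$ and therefore $f=0$, i.e.\ $e_I=c_J$. The hardest part will be the algebraic bookkeeping in isolating the order-$t$ operator and restricting all operators to $V_2(c_J)$ through the Peirce calculus, together with the careful combined use of Remark \ref{+op} and Lemma \ref{eicj5} to equate the vanishing of $S$ with the tripotent inequality $c_J\leq e_I$.
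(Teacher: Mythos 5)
Your proposal is correct and takes essentially the same route as the paper: both compute $H(h,h')=\lim_{t\to\infty}\bigl(\|\psi(t)\|+h'(\psi(t))\bigr)$ along the ray $\psi(t)=te_I-a$ via (\ref{detourcost}) and Remark \ref{rm}, isolate the coefficient-of-$t$ operator $2I-(c_J\bo P_2(c_J)e_I+P_2(c_J)e_I\bo c_J)$ on $V_2(c_J)$, and combine Remark \ref{+op} with Lemma \ref{eicj5} to force the detour cost to blow up unless $c_J\leq e_I$. The only differences are organisational: the paper argues the cases $e_I=c_J$ (via Lemma \ref{eicj1}) and $e_I\neq c_J$ (testing the supremum at the vector $c_J$ rather than an eigenvector) directly, whereas you prove the sharper equivalence $H(h,h')<\infty\iff c_J\leq e_I$ and then invoke antisymmetry of the tripotent order, which the paper uses implicitly and you verify explicitly.
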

\begin{proof} Let  $e_I=c_J$. Then by Lemma \ref{eicj1}, we have that
\[
\delta(h,h') = \sup_{u\in V_2(e_I)\colon \langle u,u\rangle =1} \langle ((a-b)\bo e_I)u,u\rangle +\sup_{u\in V_2(e_I)\colon \langle u,u\rangle =1} \langle ((b-a)\bo e_I)u,u\rangle <\infty
\]
where $a=\sum_{i\in I}\alpha_i e_i $ and $b=\sum_{j\in J} \beta_jc_j$. Hence $h$ and $h'$ are in the same part.

Conversely,  given $e_I\neq c_J$, we need to show $h$ and $h'$ are in different parts, that is, $\delta (h,h')=\infty$. We have either $c_J\nleq e_I$ or  $e_I\nleq c_J$. Assume the former. Note that it suffices to show $H(h,h') =\infty$, since the detour cost is nonnegative.

By Lemma \ref{eicj5} and Remark \ref{+op} we have
\begin{equation}\label{>0}
\langle c_J - \frac{1}{2}(c_J\bo P_2(c_J)e_I +P_2(c_J)e_I\bo c_J)(c_J), \, c_J\rangle >0.
\end{equation}

As before, for large $t$, we have $\|\psi(t)\|=t$, so that
\begin{eqnarray*}
&& \|\psi(t)\|+h'(\psi(t))  =  t + \sup_{u\in V_2(c_J)\colon \langle u,u\rangle =1} \langle -\frac{1}{2}(c_J\bo \psi(t) + \psi(t)\bo c_J)u - \sum_{j\in J}\beta_j (c_j\bo c_j)u,u\rangle\\
& = &  \sup_{u\in V_2(c_J)\colon \langle u,u\rangle =1} \langle tu- \frac{t}{2}(c_J\bo P_2(c_J)e_I + P_2(c_J)e_I\bo c_J)u
+ \,\frac{1}{2}(c_J\bo a + a\bo c_J)u - \sum_{j\in J}\beta_j (c_j\bo c_j)u,u\rangle.
\end{eqnarray*}
Hence (\ref{>0}) implies
\[
H(h,h')\geq \lim_{t\to \infty} \langle tc_J-\frac{t}{2}(c_J\bo P_2(c_J)e_I + P_2(c_J)e_I\bo c_J)(c_J)
+\frac{1}{2}(c_J\bo a + a\bo c_J)c_J - \sum_{j\in J}\beta_j c_j, \,c_J\rangle =\infty.
\]
Analogously,  $e_I\nleq c_J$ implies $H(h',h)=\infty$.
\end{proof}

Let us now recall the facial structure of ${D}^*$. By \cite[Theorem 4.4]{ER}, the closed boundary faces of $D^*$ are exactly the sets of the form
$F^*_e=\{\widetilde x\in {D}^*\colon  \widetilde x(e) = [e, x] =1\}$,
where $e$ is a tripotent in $V$.  So, the boundary faces of $D^\circ$ are precisely the sets of the form
\[
F_e=\{x\in {D}^\circ\colon  [e, x] =1\} \qquad (e\in V\mbox{ tripotent}).
\]
Note that $F_e\subset \partial D^\circ$, as $F_e^*\subset \partial D^*$.

The next lemma gives an alternative description of $F_e$, which will be useful in our discussion.
\begin{lemma}\label{facelem} Suppose that $e\in V$ is a tripotent. Then
 $$F_e =\{ \mbox{$\sum_{i=1}^p \lambda_i e_i$}\colon  \mbox{ $\sum_{i=1}^p\lambda_i =1$, $\lambda_i >0$,
  and $e_i$'s mutually orthogonal minimal tripotents with $e_i\leq e$}\}.$$
\end{lemma}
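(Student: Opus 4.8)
The plan is to prove the two inclusions separately. The whole argument reduces to a single computation: for a minimal tripotent $c$ one has $[e,c]\in\mathbb{C}$ with $|[e,c]|\le 1$, and the extreme value $[e,c]=1$ is equivalent to $c\le e$. The inputs are all available above, namely the contractivity of the Peirce projections, the orthogonality property $a\bo b=0\Rightarrow[a,b]=0$, and the self-adjointness relation $[P_2(c)y,z]=[y,P_2(c)z]$.

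For the inclusion ``$\supseteq$'', I would take $x=\sum_{i=1}^p\lambda_i e_i$ as in the statement. Since the $e_i$ are mutually orthogonal minimal tripotents and $\sum_i\lambda_i=1$, this is (the nonzero part of) a spectral decomposition, so $x\in\partial D^\circ\subset D^\circ$. To verify $[e,x]=1$ I would use $e_i\le e$: then $e-e_i$ is a tripotent orthogonal to $e_i$, so $(e-e_i)\bo e_i=0$ and hence $[e-e_i,e_i]=0$. Combined with $[e_i,e_i]=1$ this yields $[e,e_i]=1$, whence $[e,x]=\sum_i\lambda_i[e,e_i]=\sum_i\lambda_i=1$, i.e.\ $x\in F_e$.

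For the inclusion ``$\subseteq$'', I would take $x\in F_e$ with spectral decomposition $x=\sum_{j=1}^r\lambda_j c_j$, where $\lambda_1\ge\cdots\ge\lambda_r\ge 0$ and $\sum_j\lambda_j\le 1$. Because $c_j$ is minimal, $V_2(c_j)=\mathbb{C}c_j$, and using self-adjointness of $P_2(c_j)$ together with $P_2(c_j)c_j=c_j$ I would write $[e,c_j]=[P_2(c_j)e,c_j]=\mu_j$, where $P_2(c_j)e=\mu_j c_j$ and $|\mu_j|=\|P_2(c_j)e\|\le\|e\|=1$ by contractivity. Since $[e,x]=1$ is real, the chain
\[
1=\re[e,x]=\sum_j\lambda_j\,\re\,\mu_j\le\sum_j\lambda_j|\mu_j|\le\sum_j\lambda_j\le 1
\]
forces equality throughout, so $\sum_j\lambda_j=1$ and $\mu_j=1$ for every $j$ with $\lambda_j>0$. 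Discarding the vanishing eigenvalues then expresses $x=\sum_{i=1}^p\lambda_i c_i$ with $\lambda_i>0$ and $\sum_{i=1}^p\lambda_i=1$.

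The main (and essentially only) delicate point is converting $\mu_j=[e,c_j]=1$, equivalently $P_2(c_j)e=c_j$, into the order relation $c_j\le e$. This is exactly the characterisation ``$c\le e$ if and only if $P_2(c)e=c$'' recalled before Lemma \ref{eicj5}, applied to the minimal tripotent $c_j$; it supplies $c_i\le e$ for each retained index and thereby closes the ``$\subseteq$'' inclusion. Everything else is routine bookkeeping of the spectral decomposition, so I do not anticipate a further obstacle.
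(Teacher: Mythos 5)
Your proof is correct, and while the $\supseteq$ inclusion coincides with the paper's (orthogonality of $e-e_i$ and $e_i$ gives $[e,e_i]=[e_i,e_i]=1$, hence $[e,x]=\sum_i\lambda_i=1$), your $\subseteq$ inclusion takes a genuinely different route. The paper first uses $F_e\subset\partial D^\circ$ to get $\sum_i\lambda_i=1$, and then invokes the fact that $F_e$ is a \emph{face} of $D^\circ$ (coming from the Edwards--R\"uttimann description of the faces of $D^*$): since $x$ is a proper convex combination of the points $e_i\in D^\circ$, each $e_i$ must itself lie in $F_e$, which yields $[e,e_i]=[P_2(e_i)e,e_i]=1$ and hence $P_2(e_i)e=e_i$. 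You instead extract everything from the single equation $[e,x]=1$: writing $P_2(c_j)e=\mu_j c_j$ (minimality) with $|\mu_j|=\|P_2(c_j)e\|\le 1$ (contractivity), the chain
\[
1=\sum_j\lambda_j\,\re\mu_j\le\sum_j\lambda_j|\mu_j|\le\sum_j\lambda_j\le 1
\]
forces $\sum_j\lambda_j=1$ and $\mu_j=1$ for every $j$ with $\lambda_j>0$. Both arguments then close identically with the characterisation $P_2(c)e=c\iff c\le e$ recalled before Lemma \ref{eicj5}. What your variant buys is self-containedness: it uses only the definition of $F_e$ as $\{x\in D^\circ\colon [e,x]=1\}$ together with contractivity of Peirce projections, never the facial property of $F_e$ nor the inclusion $F_e\subset\partial D^\circ$ (your equality chain re-derives the latter for points of $F_e$). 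What the paper's version buys is brevity, since the face property and the boundary inclusion have already been recorded from the cited results.
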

\begin{proof}
Suppose that $x\in F_e$. Using the spectral decomposition we can write $x$ as $x=  \sum_{i=1}^p \lambda_i e_i$, where $\lambda_p>0$ and $p\leq r$ (so we ignore the zero eigenvalues).  Then $\sum_{i=1}^p \lambda_i =1$, since $x\in\partial D^\circ$.
As $F_e$ is a face and $e_i\in D^\circ$, we know that $e_i\in F_e$, and hence $1=[e,e_i]=[P_2(e_i)e,e_i]$.  Combining this with the fact that  $P_2(e_i)e\in\mathbb{C}e_i$ and $[e_i,e_i]=1$ gives $P_2(e_i)e=e_i$. Thus, $e_i\leq e$ for $i=1,\ldots,p$.

On the other hand, given $x = \sum_{i=1}^p \lambda_i e_i$  such that $\sum_{i=1}^p \lambda_i =1$, $\lambda_i >0$,
and $e_1, \ldots, e_p\leq e$  mutually orthogonal minimal tripotents in $V$, we have that  $x\in D^\circ$ and
\[
[e, x] = \sum_{i=1}^p \lambda_i [ e, e_i]
=  \sum_{i=1}^p \lambda_i [ e_i, e_i ] =1.
\]
Hence $x\in F_e$.
\end{proof}
We like to point out that $F_e = \partial D^\circ \cap A(e)_+$, where $A(e)_+$ is the closed positive cone in the $\mathrm{JB}$-algebra $A(e)$ in $V_2(e)$, cf.\,\cite[Theorem 6.12]{loos}.

\begin{theorem} \label{geometry}
If $h\in V(\infty)$ is given by  (\ref{formh21}), then $\phi(h)\in \mathrm{ri}\,F_{e_I}$. Moreover, $\phi$ maps each equivalence class in $V(\infty)/\sim$  onto the relative  interior of a boundary face of $D^\circ$.
\end{theorem}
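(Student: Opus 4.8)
The plan is to establish the pointwise claim—that $\phi(h)\in\mathrm{ri}\,F_{e_I}$—and then deduce the global statement about equivalence classes from it together with Theorems~\ref{parts2} and~\ref{homeo}. The starting point is the description recorded just after Lemma~\ref{facelem}, namely $F_e=\partial D^\circ\cap A(e)_+$, where $A(e)_+$ is the positive cone of the $\mathrm{JB}$-algebra $A(e)\subseteq V_2(e)$. On $A(e)$ the functional $\tau:=[e,\cdot]$ coincides with the trace (the sum of eigenvalues), since $[e,\sum_i\lambda_i f_i]=\sum_i\lambda_i$ whenever the $f_i\le e$ are mutually orthogonal minimal tripotents; hence $F_e=\{x\in A(e)_+\colon \tau(x)=1\}$ is a compact base of $A(e)_+$. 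First I would prove the auxiliary fact that, for a proper cone $K$ with nonempty interior in a finite-dimensional real space and a linear functional $\tau$ strictly positive on $K\setminus\{0\}$, the base $B=K\cap\{\tau=1\}$ satisfies $\mathrm{aff}\,B=\{\tau=1\}$ and $\mathrm{ri}\,B=\mathrm{int}(K)\cap\{\tau=1\}$. The nontrivial inclusion is clean: a nonzero supporting functional of $K$ vanishing at a relative-interior point of $B$ would vanish on all of $\mathrm{aff}\,B=\{\tau=1\}$, hence be identically zero, a contradiction. Applying this to $K=A(e)_+\subseteq A(e)$, and using that $\mathrm{int}\,A(e)_+$ consists precisely of the invertible positive elements of $A(e)$, identifies $\mathrm{ri}\,F_e$ with the invertible positive elements of trace $1$.

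With this in hand the first assertion is immediate. Writing $\lambda_i=e^{-\alpha_i}/\sum_{j\in I}e^{-\alpha_j}$, we have $\phi(h)=\sum_{i\in I}\lambda_i e_i$ with all $\lambda_i>0$, $\sum_{i\in I}\lambda_i=1$, and $e_I=\sum_{i\in I}e_i$. Thus $\phi(h)$ is a positive element of $A(e_I)$ whose support idempotent is the unit $e_I$, so it is invertible in $A(e_I)$ with inverse $\sum_{i\in I}\lambda_i^{-1}e_i$, and $\tau(\phi(h))=1$. By the previous paragraph, $\phi(h)\in\mathrm{ri}\,F_{e_I}$.

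For the global statement I would argue as follows. Since all horofunctions are Busemann points (Corollary~\ref{buse}), the equivalence classes of $\sim$ coincide with the parts of $V(\infty)$ by \cite[Proposition~4.5]{Wa1}, and by Theorem~\ref{parts2} two horofunctions lie in the same part precisely when their associated tripotents $e_I$ and $c_J$ agree. Hence every equivalence class $C$ determines a single nonzero tripotent $e$ with $e_I=e$ for all its members (and conversely every nonzero tripotent arises, by taking all $\alpha_i=0$). The containment $\phi(C)\subseteq\mathrm{ri}\,F_e$ follows from the first assertion. For the reverse containment, given $y\in\mathrm{ri}\,F_e$ write $y=\sum_{i=1}^{s}\lambda_i e_i$ with $\lambda_i>0$, $\sum_i\lambda_i=1$ and $e=\sum_{i=1}^{s}e_i$, and set $\alpha_i=\log(\max_j\lambda_j)-\log\lambda_i\ge 0$, so $\min_i\alpha_i=0$. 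The horofunction $h'$ built from $e$, the $e_i$ and these $\alpha_i$ lies in $C$ and satisfies $\phi(h')=\sum_i(e^{-\alpha_i}/\sum_j e^{-\alpha_j})e_i=\sum_i\lambda_i e_i=y$, exactly as in the surjectivity computation of Lemma~\ref{sur2}. Therefore $\phi(C)=\mathrm{ri}\,F_e$, the relative interior of a boundary face of $D^\circ$.

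The main obstacle is the first paragraph: pinning down $\mathrm{ri}\,F_e$ in Jordan-algebraic terms. Once $\mathrm{ri}\,F_e$ is identified with the invertible trace-one positive elements of $A(e)$, both the membership $\phi(h)\in\mathrm{ri}\,F_{e_I}$ and the surjectivity onto $\mathrm{ri}\,F_e$ reduce to the elementary bijection between the data $(e_i,\alpha_i)$ and the coefficients $\lambda_i$ already used in Lemmas~\ref{sur2} and~\ref{inj}. The remaining bookkeeping—that distinct tripotents give distinct faces and that these exhaust the proper faces of $D^\circ$—is supplied by \cite[Theorem~4.4]{ER} together with the partition of a convex body into the relative interiors of its faces \cite[Theorem~18.2]{Rock}.
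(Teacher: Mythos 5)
Your proof is correct, and it reaches the theorem by a genuinely different route than the paper. The paper never identifies $\mathrm{ri}\,F_{e_I}$ intrinsically: it first checks, via Rockafellar's prolongation criterion \cite[Theorem 6.4]{Rock} and the description of $F_{e_I}$ in Lemma \ref{facelem}, that the barycentre $w=q^{-1}e_I$ lies in $\mathrm{ri}\,F_{e_I}$, and then obtains $\phi(h)\in\mathrm{ri}\,F_{e_I}$ by contradiction: were $\phi(h)$ in the relative boundary, the points $z_\epsilon=(1+\epsilon)\phi(h)-\epsilon w$ would have to lie outside $F_{e_I}$ for every $\epsilon>0$, whereas an explicit spectral computation shows $z_\epsilon\in F_{e_I}$ for $\epsilon$ small; for the second assertion the paper then leans on Theorem \ref{parts2} together with the surjectivity of $\phi\colon V(\infty)\to\partial D^\circ$ (Lemma \ref{sur2}) and, implicitly, the partition of $\partial D^\circ$ into relative interiors of faces and the injectivity of $e\mapsto F_e$. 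You instead pin down $\mathrm{ri}\,F_e$ once and for all as the invertible, trace-one elements of $A(e)_+$, via a general cone-base lemma whose supporting-functional proof is sound; after that, membership of $\phi(h)$ is immediate (full support equals invertibility), and the ``onto'' half becomes constructive: each point of $\mathrm{ri}\,F_e$ is exhibited as $\phi(h')$ for an explicit horofunction $h'$ in the class, exactly as in Lemma \ref{sur2}, so you never need the global bijectivity of $\phi$ or the face bookkeeping at all. The trade-off is that your route uses the Jordan-theoretic fact that $\mathrm{int}\,A(e)_+$ is precisely the set of invertible positive elements of the finite-dimensional JB-algebra $A(e)$, which the paper's more elementary argument avoids; this is standard (for unital JB-algebras, equivalently Euclidean Jordan algebras) but is not proved in the paper, so you should cite it, e.g.\ to \cite{hs}, or derive both inclusions directly from (\ref{a<e}) and the spectral decomposition. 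With that reference supplied your argument is complete, and arguably tighter than the paper's rather terse final step.
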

\begin{proof}  Let $q=|I|>0$ and $w = q^{-1}e_I$, so $[e_I,w]=1$ and $w\in F_{e_I}$.  We claim that $w$ is in the relative interior of $F_{e_I}$. Let $x\in F_{e_I}$. To prove the claim it suffices to show that for each $\epsilon >0$ small, $w_\epsilon = w +\epsilon (w-x)$ is in $F_{e_I}$, see \cite[Theorem 6.4]{Rock}.

By Lemma \ref{facelem} we know  that we can  write $x = \sum_{i=1}^p \lambda_i e_i$, where $\sum_{i=1}^p \lambda_i =1$, $\lambda_i>0$ and $e_i\leq e_I$ for all $i$.  We have that $e_I-\sum_{i=1}^p e_i=\sum_{i=p+1}^{q}e_i$ is a (possibly $0$)  tripotent.  So,
\begin{equation}\label{sum}
w_\epsilon = \sum_{i=1}^p (q^{-1}(1+\epsilon) -\epsilon\lambda_i)e_i+ \sum_{i=p+1}^{q} q^{-1}(1+\epsilon)e_i
\end{equation}
and $[e_I,w_\epsilon] = (1+\epsilon)[e_I,w] - \epsilon[e_I,x] = 1$. As $q^{-1}(1+\epsilon) -\epsilon\lambda_i>0$ for all $\epsilon >0$ small, the right hand-side of (\ref{sum}) is a spectral decomposition of $w_\epsilon$ for all $\epsilon>0$ small. (We have ignored terms with zero eigenvalues.) Thus, $w_\epsilon \in  D^\circ$ for all $\epsilon >0$ small, and hence $w_\epsilon \in F_{e_I}$.

To complete the proof of the first assertion we assume by way of contradiction  that $\phi(h)\not\in \mathrm{ri}\, F_{e_I}$. As $\phi(h)\in F_{e_I}$, we know that $\phi(h)$ is in the relative boundary of $F_{e_I}$. This implies that $z_\epsilon = (1+\epsilon)\phi(h) -\epsilon w\not\in F_{e_I}$ for all $\epsilon >0$. Here we use the fact that $w$ is in $\mathrm{ri}\, F_{e_I}$ and $F_{e_I}$ is a convex set.

Note that
\begin{equation}\label{sumz}
z_\epsilon = \sum_{i=1}^q \left(\frac{(1+\epsilon)e^{-\alpha_i}}{\sum_{j=1}^q e^{-\alpha_j}} -\frac{\epsilon}{q}\right)e_i.
\end{equation}
Let $\mu^\epsilon_i$ be the coefficient of the $e_i$ in the sum (\ref{sumz}). Then $\mu^\epsilon_i>0$ for all $i$ when   $\epsilon >0$ is sufficiently small. Moreover, $\sum_{i=1}^q\mu^\epsilon_i=[e_I,z_\epsilon]  = 1$, since $e_I=e_1+\cdots+e_q$. But this implies that $z_\epsilon \in F_{e_I}$ for all $\epsilon >0$ small, which is impossible.

To show the second statement we note that for $h,h'\in V(\infty)$ given by (\ref{formh21}) and (\ref{formh3}), respectively, we have that $h\sim h'$ if and only if they are in the same part, as all horofunctions are Busemann points. This is equivalent to saying that $e_I=c_J$ by Theorem \ref{parts2}. So by the first assertion, we get that $h$ and $h'$ are both  mapped into $\mathrm{ri}\,F_{e_I}$ under $\phi$. As $\phi$ maps $V(\infty)$ onto $\partial D^\circ$ we conclude that $\phi$ maps each equivalence class in $V(\infty)/\sim$ onto the relative interior of a boundary face of $D^\circ$. \end{proof}

In the remained of this section we show that each part in $V(\infty)$ with the detour distance  is isometric to a normed space.  To introduce the norm let $e\in V$ be a tripotent and define for $x\in A(e)$,
\begin{eqnarray}\label{var}
\|x\|_{\var} &=& \sup_{u\in V_2(e)\colon \langle u,u\rangle=1} \langle (x\bo e)u,u\rangle +\sup_{u\in V_2(e)\colon \langle u,u\rangle=1} \langle (-x\bo e)u,u\rangle  \notag \\
& = &\sup_{u\in V_2(e)\colon \langle u,u\rangle=1} \langle (x\bo e)u,u\rangle -\inf_{u\in V_2(e)\colon \langle u,u\rangle=1} \langle (x\bo e)u,u\rangle.
\end{eqnarray}
\begin{lemma} \label{varnorm} The function $\|\cdot\|_{\var}$ is a semi-norm on the real vector space $A(e)$, with $\|x\|_{\var}=0$ if and only if $x=\lambda e$ for some $\lambda \in \R$.
\end{lemma}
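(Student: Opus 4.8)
The plan is to reduce everything to elementary properties of the sublinear functional
$S(T)=\sup\{\langle Tu,u\rangle\colon u\in V_2(e),\ \langle u,u\rangle=1\}$
acting on operators that preserve $V_2(e)$, and then to settle the null-space claim by complex polarisation. First I would record the structural facts. Since $x,e\in V_2(e)$, the Peirce arithmetic \eqref{eq Peirce arit 1} gives $(x\bo e)(V_2(e))\subseteq V_2(e)$, so $(x\bo e)|_{V_2(e)}$ is a genuine linear operator on the finite-dimensional complex Hilbert space $(V_2(e),\langle\cdot,\cdot\rangle)$, and in particular the two suprema in \eqref{var} are finite (a continuous function on the compact unit sphere of $V_2(e)$, which is nonempty as $e\in V_2(e)$). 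With this notation the definition reads $\|x\|_{\var}=S(x\bo e)+S(-x\bo e)$, and the map $x\mapsto x\bo e$ is real-linear because the triple product is linear in its first variable.

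Next I would verify the semi-norm axioms directly from the sublinearity of $S$, namely $S(\lambda T)=\lambda S(T)$ for $\lambda\ge 0$ and $S(T_1+T_2)\le S(T_1)+S(T_2)$, both immediate from taking suprema of the pointwise identity $\langle (T_1+T_2)u,u\rangle=\langle T_1u,u\rangle+\langle T_2u,u\rangle$. Nonnegativity follows by evaluating at a single unit vector $u_0$, since $\|x\|_{\var}\ge \langle (x\bo e)u_0,u_0\rangle+\langle(-x\bo e)u_0,u_0\rangle=0$. Absolute homogeneity splits into the case $\lambda\ge 0$, where both terms scale by $\lambda$, and $\lambda<0$, where the two terms are interchanged and scaled by $|\lambda|$, giving $\|\lambda x\|_{\var}=|\lambda|\,\|x\|_{\var}$ in both cases; and the triangle inequality is just the sum of the subadditivity estimates for $S$ applied to $x\bo e$ and to $-x\bo e$. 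No self-adjointness of $x\bo e$ is needed for this part.

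The substantive point is the null-space characterisation, which I expect to be the main obstacle. If $\|x\|_{\var}=0$, then $S(x\bo e)=-S(-x\bo e)=\inf\{\langle (x\bo e)u,u\rangle\colon u\in V_2(e),\ \langle u,u\rangle=1\}$, so the quadratic form $u\mapsto\langle (x\bo e)u,u\rangle$ is constant, equal to some $c$, on the unit sphere of $V_2(e)$. Here I would exploit that $\langle\cdot,\cdot\rangle$ is a genuine sesquilinear inner product: over $\C$ the identity $\langle Tu,u\rangle=c\langle u,u\rangle$ for all $u$ forces $T=c\,\mathrm{Id}$ by complex polarisation, and hence $(x\bo e)|_{V_2(e)}=c\,\mathrm{Id}$. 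Evaluating at $u=e$ and using $\{x,e,e\}=x\circ e=x$ from \eqref{jb*} (as $e$ is the unit of $V_2(e)$) yields $x=ce$; finally $x^*=x$ together with $x^*=\{e,x,e\}=\bar c\,e$ (conjugate linearity of the middle variable) gives $\bar c=c$, so $c\in\R$.

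The converse is immediate: if $x=\lambda e$ with $\lambda\in\R$, then $(x\bo e)|_{V_2(e)}=\lambda(e\bo e)|_{V_2(e)}=\lambda\,\mathrm{Id}$, since $(e\bo e)$ acts as the identity on $V_2(e)$ by the definition of the Peirce $2$-space; hence $S(x\bo e)=\lambda$ and $S(-x\bo e)=-\lambda$, so $\|x\|_{\var}=0$. Thus $\|\cdot\|_{\var}$ is a semi-norm whose kernel is exactly $\R e$.
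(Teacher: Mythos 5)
Your proof is correct, and for the semi-norm axioms it coincides with the paper's direct verification from the definition (\ref{var}). The genuine difference is in the kernel characterisation. The paper argues Jordan-theoretically: it first proves the translation invariance $\|x+\lambda e\|_{\var}=\|x\|_{\var}$ (which also yields the easy direction), then, given $\|x\|_{\var}=0$, takes a spectral decomposition $x=\sum_i\alpha_ie_i$ with idempotents summing to $e$, sets $y=x-\mu e$ where $\mu=\inf_{u}\langle (x\bo e)u,u\rangle$, tests the quadratic form of $y\bo e$ only at the finitely many idempotents $e_k$, and concludes $x\leq \mu e$ and $\mu e\leq x$ in the ordered $\JB$-algebra $A(e)$, hence $x=\mu e$. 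You avoid both the spectral decomposition and the order structure: from $\|x\|_{\var}=0$ the Rayleigh quotient of $(x\bo e)|_{V_2(e)}$ is constant, complex polarisation on the Hilbert space $(V_2(e),\langle\cdot,\cdot\rangle)$ forces $(x\bo e)|_{V_2(e)}=c\,I$ (here you correctly need, and invoke, the Peirce stability $(x\bo e)(V_2(e))\subseteq V_2(e)$, without which polarisation only gives orthogonality relations), evaluation at $u=e$ gives $x=\{x,e,e\}=ce$, and the involution from (\ref{jb*}) gives $c\in\R$. Your route is shorter and more elementary, but its essential input is the complex structure: polarisation fails over the reals, where one would instead have to use self-adjointness of $(x\bo e)|_{V_2(e)}$ (true for $x\in A(e)$, but something you never need to establish). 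What the paper's route buys is coherence with the surrounding machinery: the shift by $\mu e$, the cone $A(e)_+$ and the order bound (\ref{a<e}) reappear in Lemma \ref{eicj1}, Corollary \ref{deltaVinfty} and Proposition \ref{deltabsd}, and the explicitly proved invariance $\|x+\lambda e\|_{\var}=\|x\|_{\var}$ is exactly what makes $\|\cdot\|_{\var}$ descend to the quotient $A(e)/\R e$; in your write-up that invariance is only recovered a posteriori from the semi-norm axioms together with $\|\lambda e\|_{\var}=0$, which is also perfectly adequate.
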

\begin{proof} From  the definition, we have $\|x\|_{\var}\geq 0$ for all $x\in A(e)$. If $\alpha\geq 0$, then $\|\alpha x\|_{\var} =\alpha\|x\|_{\var}$ by (\ref{var}). For $\alpha<0$ we have
\begin{eqnarray*}
\|\alpha x\|_{\var} &=& \sup_{u\in V_2(e)\colon \langle u,u\rangle=1} \langle (\alpha x\bo e)u,u\rangle -\inf_{u\in V_2(e)\colon \langle u,u\rangle=1} \langle (\alpha x\bo e)u,u\rangle  \notag \\
& = &-\inf_{u\in V_2(e)\colon \langle u,u\rangle=1} \langle (-\alpha x\bo e)u,u\rangle +\sup_{u\in V_2(e)\colon \langle u,u\rangle=1} \langle (-\alpha x\bo e)u,u\rangle\\
& = & -\alpha\|x\|_{\var},
 \end{eqnarray*}
 and hence $\|\alpha x\|_\var = |\alpha|\|x\|_\var$ for all $\alpha\in \R$ and $x\in A(e)$.

It follows directly from the definition  that $\|x+y\|_\var\leq \|x\|_\var+\|y\|_\var$ for all $x,y\in A(e)$. Given $\lambda\in\R$ and $x\in A(e)$, we have
\begin{eqnarray*}
\|x+\lambda e\|_{\var} &=& \sup_{u\in V_2(e)\colon \langle u,u\rangle=1} \langle ((x+\lambda e)\bo e)u,u\rangle -\inf_{u\in V_2(e)\colon \langle u,u\rangle=1} \langle ((x+\lambda e)\bo e)u,u\rangle  \\
&=& \sup_{u\in V_2(e)\colon \langle u,u\rangle=1} (\langle (x\bo e)u,u\rangle +  \lambda\langle (e\bo e)u,u\rangle)  -\inf_{u\in V_2(e)\colon \langle u,u\rangle=1} (\langle (x\bo e)u,u\rangle +  \lambda\langle (e\bo e)u,u\rangle) \\
&=& \sup_{u\in V_2(e)\colon \langle u,u\rangle=1} (\langle (x\bo e)u,u\rangle +  \lambda)  -\inf_{u\in V_2(e)\colon \langle u,u\rangle=1} (\langle (x\bo e)u,u\rangle +  \lambda) \\
& = &  \sup_{u\in V_2(e)\colon \langle u,u\rangle=1} \langle (x\bo e)u,u\rangle -\inf_{u\in V_2(e)\colon \langle u,u\rangle=1} \langle (x\bo e)u,u\rangle,
 \end{eqnarray*}
and hence $\|x+\lambda e\|_{\var}= \|x\|_{\var}$.

On the other hand, if $\|x\|_{\var}=0$ with a spectral decomposition $x = \sum_{i=1}^p  \alpha_ie_i$ in $V(e)$,
where $\alpha_1\geq\ldots\geq\alpha_p\geq 0$ and $e_1+\cdots+e_p=e$, then we show  $x = \lambda e$ for some $\lambda\in \R$.

Take $\mu  = \inf_{u\in V_2(e)\colon \langle u,u\rangle=1}\langle (x\bo e)u,u\rangle$ and set $y = x-\mu e$. So, $\|y\|_\var=\|x\|_\var =0$ and
\[
\|y\|_\var =  \sup_{u\in V_2(e)\colon \langle u,u\rangle=1} \langle (y\bo e)u,u\rangle -\inf_{u\in V_2(e)\colon \langle u,u\rangle=1} \langle ( y\bo e)u,u\rangle =  \sup_{u\in V_2(e)\colon \langle u,u\rangle=1} \langle ( y\bo e)u,u\rangle.
\]
But $y = \sum_{i=1}^p (\alpha_i -\mu)e_i$, and hence for each $e_k\in V_2(e)$ with $ 1\leq k\leq p$ we get that
 \[
\langle ( y\bo e)e_k,e_k\rangle = (\alpha_k-\mu) \langle ( e_k\bo e_k)e_k,e_k\rangle = (\alpha_k-\mu)\langle e_k,e_k\rangle\leq 0.
 \]
 This implies that $\alpha_k-\mu\leq 0$ for all $1\leq k\leq p$ and hence $-y$ is in the closed cone of  the $\JB$-algebra $A(e)$, so that $x\leq \mu e$ in $A(e)$. Likewise
 \[
 \sup_{u\in V_2(e)\colon \langle u,u\rangle=1} \langle (-y\bo e)u,u\rangle = -\inf_{u\in V_2(e)\colon \langle u,u\rangle=1} \langle (y\bo e)u,u\rangle =0,
 \]
 gives $y\geq 0$ and hence $\mu e\leq x$. We conclude that $x = \mu e$, which completes the proof.
 \end{proof}
The preceding result shows that $\|\cdot\|_\var$ is genuine norm on the quotient space $A(e)/\R e$ of the $\JB$-algebra $A(e)$.
Further, we have the following corollary.

\begin{corollary}\label{deltaVinfty}
For $h\in V(\infty)$ we have that $([h],\delta)$ is isometric to $(A(e_I)/\R e_I,\|\cdot\|_\var)$.
\end{corollary}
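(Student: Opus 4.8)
The plan is to construct an explicit bijection $\Phi\colon[h]\to A(e_I)/\R e_I$ and to check that it carries the detour distance $\delta$ to the quotient norm $\|\cdot\|_\var$. Fix $h\in V(\infty)$ written as in (\ref{formh21}), with tripotent $e_I$ and associated element $a=\sum_{i\in I}\alpha_i e_i$; since the $e_i\le e_I$ are mutually orthogonal minimal tripotents summing to $e_I$ and $\alpha_i\ge 0$ with $\min_{i\in I}\alpha_i=0$, the element $a$ lies in the positive cone $A(e_I)_+$ and has smallest eigenvalue $0$. By Theorem \ref{parts2} the part $[h]$ consists precisely of the horofunctions $h'$ of the form (\ref{formh3}) with $c_J=e_I$, and by Corollary \ref{h=h'} each such $h'$ is uniquely determined by its element $b=\sum_{j\in J}\beta_jc_j$, which is again a positive element of $A(e_I)$ with smallest eigenvalue $0$. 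I would then define $\Phi(h')=b+\R e_I$.

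Next I would verify that $\Phi$ is a bijection onto $A(e_I)/\R e_I$. Injectivity is immediate: if two such representatives $b,b'$, each positive with smallest eigenvalue $0$, satisfy $b-b'=\lambda e_I$, then comparing smallest eigenvalues forces $\lambda=0$, so $b=b'$ and hence $h'=h''$ by Corollary \ref{h=h'}. For surjectivity, given any $x\in A(e_I)$ let $\lambda$ be its smallest eigenvalue in the $\mathrm{JB}$-algebra $A(e_I)$ and set $b=x-\lambda e_I$; then $b\in A(e_I)_+$ has smallest eigenvalue $0$, and its spectral decomposition $b=\sum_{j\in J}\beta_jc_j$ (with $c_j\le e_I$ minimal, $\sum_{j\in J}c_j=e_I$, and $\min_{j}\beta_j=0$) produces, via the converse direction of Theorem \ref{formofh}, a horofunction $h'$ lying in the part $[h]$ with $\Phi(h')=b+\R e_I=x+\R e_I$.

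It remains to show $\Phi$ is distance-preserving. For $h',h''\in[h]$ with associated elements $b,b'$, I would invoke Lemma \ref{eicj1} (applicable since $c_J=e_I$) to obtain
\[
\delta(h',h'')=H(h',h'')+H(h'',h')=\sup_{u}\langle((b-b')\bo e_I)u,u\rangle+\sup_{u}\langle((b'-b)\bo e_I)u,u\rangle,
\]
where each supremum is taken over $u\in V_2(e_I)$ with $\langle u,u\rangle=1$. Comparing with the definition (\ref{var}) of $\|\cdot\|_\var$ applied to the element $b-b'\in A(e_I)$ and the tripotent $e=e_I$, the right-hand side is exactly $\|b-b'\|_\var$. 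Since by Lemma \ref{varnorm} the semi-norm $\|\cdot\|_\var$ has kernel $\R e_I$ and therefore descends to the genuine norm of $A(e_I)/\R e_I$, we have $\|b-b'\|_\var=\|\Phi(h')-\Phi(h'')\|_\var$, so $\delta(h',h'')=\|\Phi(h')-\Phi(h'')\|_\var$ and $\Phi$ is a surjective isometry.

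The only genuinely delicate point is the bookkeeping of the parametrization: one must argue that within a fixed part the admissible elements $b$ range over exactly the positive elements of $A(e_I)$ with smallest eigenvalue $0$, and that each coset of $\R e_I$ contains precisely one such representative. Once Theorem \ref{parts2}, Corollary \ref{h=h'}, and Lemma \ref{eicj1} are in place, the isometry property reduces to a direct term-by-term comparison with (\ref{var}), so it is this indexing argument (together with the use of the $\mathrm{JB}$-algebra spectral decomposition in $A(e_I)$) where the care is needed rather than in any new computation.
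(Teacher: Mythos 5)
Your proposal is correct and follows essentially the same route as the paper: the same map $h'\mapsto \sum_{j\in J}\beta_j c_j+\R e_I$, with bijectivity deduced from Theorem \ref{parts2} and Corollary \ref{h=h'}, and the isometry property from Lemma \ref{eicj1} combined with Lemma \ref{varnorm}. The paper's own proof is just a terser version of this; your extra bookkeeping (unique positive representative with smallest eigenvalue $0$ in each coset, surjectivity via the converse direction of Theorem \ref{formofh}) fills in details the paper leaves implicit.
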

\begin{proof}
Let $h\in V(\infty)$ be given by (\ref{formh21}). We define a map $\tau\colon [h]\to A(e_I)/\R e_I$ by
$$\tau(h') =  \sum_{j\in J} \beta_jc_j + \R e_I  \in A(e_I)/\R e_I \qquad (h'\in [h] \mbox{ given by  (\ref{formh3})}).$$
This is a bijection, as $\min_{j\in J} \beta_j =0$ and $e_I=c_J$ for all horofunctions $h'\in[h]$ by Theorem \ref{parts2}. It is an isometry by Lemmas \ref{eicj1} and \ref{varnorm}.
\end{proof}

\section{Extension of the exponential map} \label{ExpHom}
The exponential map  $\Exp\colon V\to D$ of the Bergman metric at $0\in D$   is a real analytic homeomorphism, where
\begin{equation}\label{ex1}
\Exp(x) = \tanh(x) = \sum_{i=1}^r \tanh(\lambda_i)e_i
\end{equation}
for each $x\in V$ with spectral decomposition $x= \sum_{i=1}^r\lambda_i e_i$ by  \cite[Lemma 4.3 and Corollary 4.8]{loos}.

In this final section we show that $\Exp$ extends as a homeomorphism
$\widetilde{\Exp} \colon V \cup V(\infty) \to D \cup D(\infty)$ such that $\widetilde{\Exp}$ maps each equivalence class in $V(\infty)/\sim$ onto an equivalence class of $D(\infty)/\sim$. In particular, we find that the metric compactification of a Hermitian symmetric space $M\approx D\subset V$
can be realised as the closed dual unit ball $D^*$, by Theorem \ref{homeo}, and its geometry coincides with the facial structure of $D^*$ by Theorem \ref{geometry}.

Given $h\in V(\infty)$ with
\[
h(x) =  \Lambda_{V_2(e)}(-\frac{1}{2}(e\bo P_2(e)x+P_2(e)x\bo e)-\sum_{i=1}^p\alpha_i (e_i\bo e_i))\quad (x\in V),
 \]
 we define  $\widetilde{\Exp} (h) =g$, where the function $g\colon D \to \mathbb{R}$  is given by
 \begin{equation}\label{ex2}
 g(z) =\frac{1}{2} \log \left\|\sum_{1\leq i\leq j\leq p}e^{-\alpha_i}e^{-\alpha_j}B(z,z)^{-1/2}B(z,e)P_{ij}\right\|
\qquad (z\in D),
 \end{equation}
which is a horofunction by Theorem \ref{hbsd}, as  $\min_i \alpha_i =0$ implies $\max_i e^{-\alpha_i}= 1$.

We will prove that the extension $\widetilde\Exp$ is a homeomorphism in following theorem.

\begin{theorem}\label{exp}
The extension $\widetilde{\Exp} \colon V \cup V(\infty) \to D \cup D(\infty)$ of the exponential map
is a well-defined homeomorphism that maps each equivalence class of $V(\infty)/\sim$ onto an equivalence class of $D(\infty)/\sim$.
\end{theorem}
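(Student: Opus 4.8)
The plan is to prove that $\widetilde{\Exp}$ is a continuous bijection from the compact space $V\cup V(\infty)$ onto the Hausdorff space $D\cup D(\infty)$, organising everything around the homeomorphism $\phi\colon V\cup V(\infty)\to D^\circ$ of Theorem \ref{homeo}. First I would record that $\widetilde{\Exp}$ is well defined: for $h\in V(\infty)$ the data $(e_I,\ \sum_{i\in I}\alpha_i e_i)$ is unique by Corollary \ref{h=h'}, and via Remark \ref{uuni} this determines the right-hand side of (\ref{ex2}), which is a horofunction of $D$ by Theorem \ref{hbsd} since $\min_i\alpha_i=0$ forces $\max_i e^{-\alpha_i}=1$. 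The substitution $\lambda_i=e^{-\alpha_i}$ is a bijection between the parameters $\alpha_i\in[0,\infty)$ (with $\min=0$) of Theorem \ref{formofh} and the parameters $\lambda_i\in(0,1]$ (with $\max=1$) of Theorem \ref{hbsd}, carried out on the same tripotents $e_1,\dots,e_p$; together with the fact that $\Exp\colon V\to D$ is a homeomorphism, this shows $\widetilde{\Exp}$ is a bijection sending $V$ onto $D$ and $V(\infty)$ onto $D(\infty)$. Crucially, it preserves the defining tripotent, $e_I=e$.

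The second step is to introduce the Carath\'eodory counterpart $\Phi\colon D\cup D(\infty)\to D^\circ$ of $\phi$, defined by the analogues of (\ref{phi1}) and (\ref{phi2}): for $z=\sum_i t_i c_i\in D$ set $\Phi(z)=\big(\sum_j\cosh(\tanh^{-1}t_j)\big)^{-1}\sum_i\sinh(\tanh^{-1}t_i)\,c_i$, and for $g\in D(\infty)$ with parameters $\lambda_i$ set $\Phi(g)=\big(\sum_j\lambda_j\big)^{-1}\sum_i\lambda_i e_i$. A short computation using $\tanh\lambda_i=t_i$, $e^{\lambda_i}+e^{-\lambda_i}=2/\sqrt{1-t_i^2}$ and $e^{\lambda_i}-e^{-\lambda_i}=2t_i/\sqrt{1-t_i^2}$ shows $\Phi\circ\Exp=\phi$ on $V$, while $\lambda_i=e^{-\alpha_i}$ gives $\Phi\circ\widetilde{\Exp}=\phi$ on $V(\infty)$, so $\Phi\circ\widetilde{\Exp}=\phi$ throughout. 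Since $\phi$ and $\widetilde{\Exp}$ are bijections, $\Phi=\phi\circ\widetilde{\Exp}^{-1}$ is automatically a bijection, and it remains only to prove that $\Phi$ is continuous: a continuous bijection from the compact space $D\cup D(\infty)$ onto the Hausdorff space $D^\circ$ is a homeomorphism, whence $\widetilde{\Exp}=\Phi^{-1}\circ\phi$ is a homeomorphism.

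The main obstacle is the continuity of $\Phi$, which I would establish by the same scheme as for $\phi$ in Section \ref{homdualball}, using sequences since both compactifications are metrizable. Continuity on $D$ follows as in Lemma \ref{cty}. For a sequence $y_k\to\xi\in\partial D$ with $h_{y_k}\to g\in D(\infty)$ I would write $y_k=\sum_i\tanh(\mu_{ik})c_{ik}$ (so $\mu_{ik}=\tanh^{-1}$ of the $i$th eigenvalue), use $1-\tanh^2\mu=4e^{-2\mu}(1+o(1))$ to obtain $\sqrt{(1-\|y_k\|^2)/(1-\tanh^2\mu_{ik})}\to e^{-\beta_i}=\lambda_i$ along the mechanism in the proof of Theorem \ref{hbsd}, and thereby identify $g$ and conclude $\Phi(y_k)\to\Phi(g)$. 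The genuinely new ingredient is continuity on $D(\infty)$: given $g_n\to g$ in $D(\infty)$ one must control the limit of the operator-norm expression in (\ref{ex2}) as the tripotents $e_{jn}\to e_j$ and exponents $\lambda_{in}\to\lambda_i$ vary, with possibly some $\lambda_{in}\to 0$. This is the exact Carath\'eodory analogue of Lemma \ref{lem:5.17}, and I expect it to be the hardest part; it should succumb to the same argument—pass to subsequences, expand in the joint Peirce decomposition with respect to $\{c_{jn}\}$, and use the uniform lower bound coming from $g_n\in D(\infty)$ to force the offending Peirce components to vanish in the limit—combined with the norm convergence of Peirce projections recorded in Remark \ref{pij}.

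Finally I would match the partitions. By Theorem \ref{geometry}, $\phi$ maps each class in $V(\infty)/\!\sim$ onto $\mathrm{ri}\,F_{e_I}$, so the identity $\Phi\circ\widetilde{\Exp}=\phi$ gives $\Phi(\widetilde{\Exp}(C))=\phi(C)=\mathrm{ri}\,F_{e_I}$ for each class $C$, i.e.\ $\widetilde{\Exp}(C)=\Phi^{-1}(\mathrm{ri}\,F_{e_I})\cap D(\infty)$. It then suffices to note the $D$-analogue of Theorems \ref{parts2} and \ref{geometry}—that $\Phi$ maps each class of $D(\infty)/\!\sim$ onto the relative interior of a boundary face of $D^\circ$, the parts of $D(\infty)$ being indexed by the tripotent $e$—which is proved exactly as on the $V$ side. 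Since $\widetilde{\Exp}$ preserves the defining tripotent and is a bijection $V(\infty)\to D(\infty)$, it therefore carries each equivalence class of $V(\infty)/\!\sim$ onto an equivalence class of $D(\infty)/\!\sim$, completing the proof.
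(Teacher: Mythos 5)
Your overall architecture---factoring $\phi=\Phi\circ\widetilde{\Exp}$ through $D^\circ$ and reducing the homeomorphism claim to continuity of $\Phi$ on the compact space $D\cup D(\infty)$---is an attractive reorganisation, and it does differ from the paper, which proves continuity of $\widetilde{\Exp}$ directly (sequences in $V$ converging to a horofunction, then sequences in $V(\infty)$ converging in $V(\infty)$) and invokes compactness at the end. However, there is a genuine gap at the point where you declare $\widetilde{\Exp}$ to be a bijection. The substitution $\lambda_i=e^{-\alpha_i}$ matches the \emph{parameters} appearing in Theorems \ref{formofh} and \ref{hbsd}, but injectivity of $\widetilde{\Exp}$ on $V(\infty)$ (equivalently, well-definedness of your map $\Phi$ on $D(\infty)$) requires that the representation of a horofunction of $(D,\rho)$ by its data $\bigl(e,\sum_i\lambda_ie_i\bigr)$ be \emph{unique}: two distinct data sets must yield distinct functions on $D$. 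Theorem \ref{hbsd} gives only the existence of such a representation, not its uniqueness; a priori two expressions of the form (\ref{horD}) with different tripotents or different coefficients could agree as functions on $D$. This uniqueness is precisely the hardest ingredient of the paper's proof: it is the second assertion of Proposition \ref{deltabsd} (together with Proposition \ref{partsbsd2}), and its proof requires first showing that every horofunction of $(D,\rho)$ is a Busemann point via the explicit geodesics $\gamma(t)=\tanh(te-\sum_i\alpha_ie_i)$ (Lemma \ref{hisBus}), then computing detour costs with Bergman operators and Loos's identity expressing $\sum_{i\leq j}\mu_i\mu_jP'_{ij}$ through $Q_v^2$, and finally a $\mathrm{JB}$-algebra order argument ($a^2\leq b^2$ and $b^2\leq a^2$ force $a=b$). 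None of this is a formal transcription of the $V$-side arguments: Theorem \ref{parts2} and Corollary \ref{h=h'} exploit straight-line geodesics and the linear form of the horofunctions of the normed space, whereas the $D$ side needs the M\"obius/Bergman machinery.

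Your closing remark that the parts structure of $D(\infty)$ ``is proved exactly as on the $V$ side'' both arrives too late logically---bijectivity of $\widetilde{\Exp}$ and well-definedness of $\Phi$ have already been used in your second step---and understates the work just described. The same issue infects the continuity argument for $\Phi$: to conclude $\Phi(y_k)\to\Phi(g)$ or $\Phi(g_n)\to\Phi(g)$ you must identify the limiting horofunction with $g$ and then evaluate $\Phi$ on it, which is meaningless until $\Phi$ is known to be well defined on $D(\infty)$. The proposal is repairable: establish the $D$-side Busemann/detour-distance results (the analogues of Lemma \ref{hisBus} and Propositions \ref{partsbsd2} and \ref{deltabsd}) \emph{before} your second step, after which both the bijectivity claim and the factorisation argument go through. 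As written, though, the central uniqueness ingredient is assumed rather than proved.
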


It follows from this theorem that the geometry and global topology of the metric compactifications of $(D,\rho)$ and of the $\mathrm{JB}^*$-triple $(V,\|\cdot\|)$ with open unit ball $D$ coincide. So have the following consequence by Theorems \ref{homeo} and \ref{geometry}.

\begin{corollary} There exists a homeomorphism $\psi\colon D\cup D(\infty)\to D^\circ$ that maps each equivalence class in $D(\infty)/\sim$ onto the relative interior of a boundary face of $D^\circ$.
\end{corollary}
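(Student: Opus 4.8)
The plan is to realise the desired homeomorphism $\psi$ as a composition of the two homeomorphisms already constructed, routing $D\cup D(\infty)$ back through $V\cup V(\infty)$ and then onto $D^\circ$. By Theorem \ref{exp} the extended exponential map $\widetilde{\Exp}\colon V\cup V(\infty)\to D\cup D(\infty)$ is a homeomorphism, so its inverse $\widetilde{\Exp}^{-1}\colon D\cup D(\infty)\to V\cup V(\infty)$ is again a homeomorphism; and by Theorem \ref{homeo} the map $\phi\colon V\cup V(\infty)\to D^\circ$ is a homeomorphism. I would therefore set
\[
\psi = \phi\circ\widetilde{\Exp}^{-1}\colon D\cup D(\infty)\to D^\circ,
\]
which is a homeomorphism as a composition of homeomorphisms. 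This settles the topological half of the statement at once.

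It then remains to track the equivalence classes through this composition. First I would note that, since $\widetilde{\Exp}$ restricts to $\Exp\colon V\to D$ on the interior and carries $V(\infty)$ onto $D(\infty)$, its inverse sends $D(\infty)$ onto $V(\infty)$; moreover, by Theorem \ref{exp}, $\widetilde{\Exp}$ maps each equivalence class of $V(\infty)/\sim$ onto an equivalence class of $D(\infty)/\sim$. Being a bijection that respects the relation $\sim$ on the two boundaries, $\widetilde{\Exp}^{-1}$ consequently maps each equivalence class $[g]$ in $D(\infty)/\sim$ onto an equivalence class $[h]$ in $V(\infty)/\sim$. Second, by Theorem \ref{geometry}, $\phi$ maps $[h]$ onto the relative interior of a boundary face of $D^\circ$. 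Composing, $\psi([g]) = \phi(\widetilde{\Exp}^{-1}([g])) = \phi([h])$ is precisely the relative interior of a boundary face of $D^\circ$, as required.

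Since all three ingredients---Theorems \ref{exp}, \ref{homeo}, and \ref{geometry}---are already in hand, there is no genuine obstacle here; the only point deserving a moment's care is the bookkeeping that $\widetilde{\Exp}^{-1}$ sends whole equivalence classes to whole equivalence classes (rather than merely individual horofunctions to horofunctions), which is immediate from the class-preserving clause of Theorem \ref{exp} together with the bijectivity of $\widetilde{\Exp}$ on $V(\infty)$. The substantive work lies entirely in Theorem \ref{exp}; this corollary is purely a matter of assembling it with Theorems \ref{homeo} and \ref{geometry}.
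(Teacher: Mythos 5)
Your proposal is correct and is essentially the paper's own argument: the corollary is stated there as an immediate consequence of Theorem \ref{exp} combined with Theorems \ref{homeo} and \ref{geometry}, i.e.\ precisely the composition $\phi\circ\widetilde{\Exp}^{-1}$ you construct. Your explicit bookkeeping that $\widetilde{\Exp}^{-1}$ carries whole equivalence classes onto whole equivalence classes (via bijectivity and the class-preserving clause of Theorem \ref{exp}) is a correct and slightly more careful rendering of what the paper leaves implicit.
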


To prove Theorem \ref{exp}, we will  need the fact that all horofunctions of  $D \cup D(\infty)$ are Busemann points
and exploit the detour distance on the parts in $D(\infty)$.

Let $h$ be horofunction functions in $D(\infty)$  of the form,
\begin{equation}\label{horh}
h(z) =\frac{1}{2} \log \left\|\sum_{1\leq i\leq j\leq p}\lambda_i\lambda_jB(z,z)^{-1/2}B(z,e)P_{ij}\right\|
\qquad (z\in D)
\end{equation}
for some $p\in \{1, \ldots, r\}$,  $\lambda_i\in (0,1]$ with $\max_i \lambda_i =1$ and $e = e_1 + e_2 + \cdots +  e_{p}$ a tripotent.  Using
$V_2(e)= \bigoplus_{1\leq i\leq j\leq p} V_{kl}$ and (\ref{Peirce-2 of sum}),  the norm in (\ref{horh}) can be computed over $V_2(e)$, that is,
\begin{equation}\label{penorm}
\|\sum_{1\leq i\leq j\leq p}\lambda_i\lambda_jB(z,z)^{-1/2}B(z,e)P_{ij}\|
=\|\sum_{1\leq i\leq j\leq p}\lambda_i\lambda_jB(z,z)^{-1/2}B(z,e)P_{ij}P_2(e)\|.
\end{equation}

For $ i =1,\ldots,p$, let $\alpha_i = -\log \lambda_i\geq 0$, so $\min_i \alpha_i =0$. Let $\gamma \colon [0, \infty) \to D$ be the path
\begin{equation}\label{geodesic}
\gamma(t) = \Exp( te -\sum_i \alpha_i e_i) = \tanh(te -\sum_i \alpha_i e_i) = \sum_i \tanh(t-\alpha_i)e_i \qquad (t\geq 0).
\end{equation}
Note that $\gamma(t)\to e \in \partial D$ as $t\to\infty$, in other words, $\gamma(t)$ goes to infinity in the metric space $(D, \rho)$.
We show below that $\gamma$ is a {\em geodesic} in the metric space $(D, \rho)$, in the sense that
$d(\gamma(s),\gamma(t)) = |s-t|$ for all $s,t\in [0,\infty)$. We will see that $h$ in (\ref{horh}) is a horofunction obtained by taking a sequence $(\gamma(t_k))$ along $\gamma$. For simplicity we say that $\gamma(t)$ {\em converges to $h$},
viewing $h\in D(\infty)$ as an ideal boundary point of $(D, \rho)$.
\begin{lemma}\label{hisBus}
The path $\gamma$ in (\ref{geodesic}) is a geodesic in $(D,\rho)$ converging to the horofunction $h$ in (\ref{horh}),
and $h$ is a Busemann point.
\end{lemma}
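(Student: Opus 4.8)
The plan is to establish the three claims in order: that $\gamma$ is a geodesic, that $\gamma(t)$ converges to the horofunction $h$ of (\ref{horh}), and finally that $h$ is a Busemann point, the last being an immediate consequence of the first two. Throughout I would work in the joint Peirce decomposition $V=\bigoplus_{0\le k\le l\le p}V_{kl}$ determined by the mutually orthogonal minimal tripotents $e_1,\dots,e_p$.

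To see that $\gamma$ is a geodesic I would compute $g_{-\gamma(s)}(\gamma(t))$ explicitly and use $\rho(x,y)=\tanh^{-1}\|g_{-x}(y)\|$. Writing $\mu_i=\tanh(s-\alpha_i)$ and $\nu_i=\tanh(t-\alpha_i)$, so $\gamma(s)=\sum_i\mu_ie_i$ and $\gamma(t)=\sum_i\nu_ie_i$, orthogonality $e_i\bo e_j=0$ $(i\ne j)$ shows that $\gamma(t)\bo(-\gamma(s))$ acts on $V_{kl}$ as the scalar $-\tfrac12(\mu_k\nu_k+\mu_l\nu_l)$ (with $\mu_0=\nu_0=0$), while $B(-\gamma(s),-\gamma(s))^{1/2}=B(\gamma(s),\gamma(s))^{1/2}$ acts as $(1-\mu_k^2)^{1/2}(1-\mu_l^2)^{1/2}$ by (\ref{eq:Bergmann Sq Rt}). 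Since $\gamma(t)\in\bigoplus_iV_{ii}$, substituting these into the M\"obius formula (\ref{g2}) collapses the resolvent and Bergman factors coordinatewise, giving
\[
g_{-\gamma(s)}(\gamma(t))=\sum_i\frac{\nu_i-\mu_i}{1-\mu_i\nu_i}\,e_i=\sum_i\tanh(t-s)\,e_i
\]
by the addition formula for $\tanh$. As the $e_i$ are orthogonal, $\|g_{-\gamma(s)}(\gamma(t))\|=|\tanh(t-s)|$, whence $\rho(\gamma(s),\gamma(t))=|t-s|$ and $\gamma$ is a geodesic.

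For the convergence I would note that for large $t$, $\|\gamma(t)\|=\tanh t\to1$, so $\gamma(t)\to e\in\partial D$ and Lemma \ref{hhh} gives $\|g_{-\gamma(t)}(z)\|\to1$; hence the correction factor $\big((1+\|g_{-\gamma(t)}(z)\|)/(1+\|\gamma(t)\|)\big)^2$ in (\ref{g}) tends to $1$, and by (\ref{bid}) it suffices to evaluate $\lim_{t\to\infty}(1-\|\gamma(t)\|^2)B(\gamma(t),\gamma(t))^{-1/2}$ and appeal to continuity of $y\mapsto B(z,y)$ at $y=e$. Using (\ref{eq:Bergmann Neg Sq Rt}) together with $1-\|\gamma(t)\|^2=\cosh^{-2}t$, the coefficient of $P_{kl}$ equals $\cosh(t-\alpha_k)\cosh(t-\alpha_l)/\cosh^2t$ (an index equal to $0$ contributing a factor $1$), which tends to $e^{-\alpha_k-\alpha_l}=\lambda_k\lambda_l$ for $1\le k\le l\le p$ and to $0$ as soon as one index is $0$. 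Thus $(1-\|\gamma(t)\|^2)B(\gamma(t),\gamma(t))^{-1/2}\to\sum_{1\le i\le j\le p}\lambda_i\lambda_jP_{ij}$, and therefore
\[
\lim_{t\to\infty}h_{\gamma(t)}(z)=\frac12\log\Big\|\sum_{1\le i\le j\le p}\lambda_i\lambda_j\,B(z,z)^{-1/2}B(z,e)P_{ij}\Big\|=h(z),
\]
which is exactly (\ref{horh}); so $\gamma(t)$ converges to $h$.

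Finally, $h$ is a Busemann point. The sequence $x_k=\gamma(k)$, $k=0,1,2,\dots$, is unbounded, since $\rho(0,\gamma(k))=\tanh^{-1}\|\gamma(k)\|\to\infty$, and it is an almost geodesic: for $n\ge m$ the geodesic property of $\gamma$ gives $\rho(x_n,x_m)+\rho(x_m,x_0)-\rho(x_n,x_0)=(n-m)+m-n=0$. By the previous paragraph $\lim_k\rho(\cdot,x_k)-\rho(0,x_k)=h$, so $h$ is, by definition, a Busemann point. I expect the main obstacle to be the coordinatewise reduction of the M\"obius transformation in the first step: one must verify that the resolvent $(I+\gamma(t)\bo(-\gamma(s)))^{-1}$ and the Bergman square root act diagonally on the flat $\bigoplus_i\mathbb{R}e_i$, so that $g_{-\gamma(s)}(\gamma(t))$ remains in the flat and reduces to a single hyperbolic tangent. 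The convergence computation then parallels the proof of Theorem \ref{hbsd}, and the Busemann claim follows at once.
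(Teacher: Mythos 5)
Your proof is correct and takes essentially the same route as the paper's: an explicit computation of $g_{-\gamma(s)}(\gamma(t))$ on the flat spanned by $e_1,\dots,e_p$ to obtain the geodesic property, then the expansion of $(1-\|\gamma(t)\|^2)B(\gamma(t),\gamma(t))^{-1/2}$ via (\ref{eq:Bergmann Neg Sq Rt}), Lemma \ref{hhh} and (\ref{bid}) to identify $\lim_t h_{\gamma(t)}$ with $h$, and finally the geodesic property to conclude that $h$ is a Busemann point. The only differences are presentational: you write out the coordinatewise M\"obius computation and the almost-geodesic verification that the paper compresses into ``a direct computation,'' and you use $\cosh$ ratios where the paper uses the identity $e^{2x}=(1+\tanh x)/(1-\tanh x)$.
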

\begin{proof} A direct computation gives, for $t\geq s$, that
\[
\|g_{-\gamma(s)}(\gamma(t))\| = \|\sum_{i=1}^p \tanh(t-s)e_i \|= \tanh(t-s)
\]
and hence $\rho(\gamma(t),\gamma(s))=t-s$, which shows that $\gamma$ is a geodesic.

Observe that for sufficiently large $t\geq 0$,  we have  $\|\gamma(t)\| =\tanh(t)$, as $\min_i\alpha_i=0$.
It follows that $1-\|\gamma(t)\|^2 =1-\tanh^2(t)$ for large $t$.

Set $\beta_{it} = \tanh(t-\alpha_i)$ for $ i=1,\ldots,p$,  and put $\beta_{it}=0$ for $i=0$. Then
\[
B(\gamma(t),\gamma(t))^{-1/2} = \sum_{0\leq i\leq j\leq p} (1-\beta^2_{it})^{-1/2}(1-\beta^2_{jt})^{-1/2}P_{ij}.
\]
Using the identity $e^{2x} = (1+\tanh(x))/(1-\tanh(x))$, we get
\[
\left(\frac{1-\tanh^2(t)}{1-\beta_{it}^2}\right)^{1/2} = \frac{e^{-t}(1+\tanh(t))}
{e^{-t+\alpha_i}(1+\tanh(t-\alpha_i))}\to e^{-\alpha_i} \quad {\rm as}~ t\to\infty
\]
for $ i=1,\ldots ,p$. For $i=0$, we have  $(1-\tanh^2(t))/(1-\beta_{it}^2) = 1-\tanh^2(t)\to 0$.

Recall that by Lemma \ref{hhh} and equations (\ref{bid}) and (\ref{gg}), we have for each $z\in D$ that
\begin{eqnarray*}
\lim_{t}h_{\gamma(t)}(z)  &= &  \lim_{t}\frac{1}{2} \log \left\|(1-\tanh^2(t))B(z,z)^{-1/2}B(z,\gamma(t))B(\gamma(t),\gamma(t))^{-1/2}\right\|\\
 & = &  \lim_{t} \frac{1}{2} \log \left\|\sum_{0\leq i\leq j\leq p} \left(\frac{1-\tanh^2(t)}{1-\beta_{it}^2}\right)^{1/2}  \left(\frac{1-\tanh^2(t)}{1-\beta_{jt}^2}\right)^{1/2} B(z,z)^{-1/2}B(z,\gamma(t))P_{ij}\right\|.
\end{eqnarray*}
This implies that
\[
\lim_{t} h_{\gamma(t)}(z) =  \frac{1}{2} \log \left\|\sum_{1\leq i\leq j\leq p}e^{-\alpha_i}e^{-\alpha_j}B(z,z)^{-1/2}B(z,e)P_{ij}\right\|
 =h(z) \qquad (z\in D)
\]
and  shows that $h$ is a Busemann point.
\end{proof}

Let us now analyse the parts of horofunction boundary $D(\infty)$.
\begin{proposition}\label{partsbsd2}
Let $h,h'\in D(\infty)$ with $h$ given as in (\ref{horh}) and $h'$ given by
\begin{equation}\label{horh'}
h'(z) =\frac{1}{2} \log \left\|\sum_{1\leq i\leq j\leq q}\mu_i\mu_jB(z,z)^{-1/2}B(z,c)P'_{ij}\right\|,
\end{equation} where $c =c_1+\cdots+c_q$.
If $h$ and $h'$ are in the same part, then $e=c$.
\end{proposition}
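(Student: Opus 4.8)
The plan is to prove the contrapositive: assuming $e\neq c$, I will show that $h$ and $h'$ lie in different parts, i.e. $\delta(h,h')=\infty$. Since all horofunctions of $(D,\rho)$ are Busemann points (Lemma \ref{hisBus}), two of them lie in the same part exactly when their detour distance is finite, so this suffices. First I would record that $\leq$ is a genuine partial order on tripotents: if $c\leq e$ and $e\leq c$, then $(e-c)\bo c=0$ and $(e-c)\bo e=0$, whence $(e-c)\bo(e-c)=(e-c)\bo e-(e-c)\bo c=0$, and axiom (iv) gives $\|e-c\|^2=\|(e-c)\bo(e-c)\|=0$, so $e=c$. Therefore $e\neq c$ forces $c\not\leq e$ or $e\not\leq c$; by the symmetric roles of $h$ and $h'$ I may assume $c\not\leq e$ and aim to prove $H(h,h')=\infty$, which already yields $\delta(h,h')=\infty$.

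By Lemma \ref{hisBus} the ray $\gamma(t)=\sum_i\tanh(t-\alpha_i)e_i$, with $\alpha_i=-\log\lambda_i$ and $\min_i\alpha_i=0$, is a geodesic converging to $h$, and $\rho(0,\gamma(t))=t$ for all large $t$. Hence by the detour-cost formula (\ref{detourcost}),
\[
H(h,h')=\lim_{t\to\infty}\big(t+h'(\gamma(t))\big).
\]
Evaluating the horofunction $h'$ of (\ref{horh'}) at the interior point $\gamma(t)$ gives exactly $h'(\gamma(t))=\tfrac12\log\|\Theta(t)\|$, where, after factoring out the common left factor,
\[
\Theta(t)=B(\gamma(t),\gamma(t))^{-1/2}\,B(\gamma(t),c)\,M,\qquad M=\sum_{1\leq i\leq j\leq q}\mu_i\mu_j P'_{ij},
\]
and $\|\cdot\|$ is the operator norm on $(V,\|\cdot\|)$. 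Everything thus reduces to controlling the size of $\|\Theta(t)\|$, and the goal becomes $\liminf_{t\to\infty}\|\Theta(t)\|>0$: this makes $h'(\gamma(t))$ bounded below, so $t+h'(\gamma(t))\to\infty$.

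For the lower bound I would pass to the Hilbert structure $\langle x,y\rangle=\mathrm{Tr}(x\bo y)$, writing $\|\cdot\|_2$ for the associated operator norm. By (\ref{eq:Bergmann Neg Sq Rt}) one has $B(\gamma(t),\gamma(t))^{-1/2}=\sum_{0\leq k\leq l\leq p}\cosh(t-\alpha_k)\cosh(t-\alpha_l)P_{kl}$, which is self-adjoint and positive for $\langle\cdot,\cdot\rangle$ with all eigenvalues $\cosh(t-\alpha_k)\cosh(t-\alpha_l)\geq 1$; hence $\|B(\gamma(t),\gamma(t))^{-1/2}\xi\|_2\geq\|\xi\|_2$ for every $\xi$, and therefore
\[
\|\Theta(t)\|_2\geq\|B(\gamma(t),c)M\|_2\longrightarrow\|B(e,c)M\|_2\qquad(t\to\infty),
\]
using $B(\gamma(t),c)\to B(e,c)$. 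Now $M$ is self-adjoint, positive, and invertible on its range $V_2(c)$ (its nonzero eigenvalues $\mu_i\mu_j$ are strictly positive), so $B(e,c)M\neq 0$ is equivalent to $B(e,c)$ being nonzero on $V_2(c)$, which by Lemma \ref{eicj5}(iv) is precisely the condition $c\not\leq e$. Thus $\|B(e,c)M\|_2>0$, and since $\dim V<\infty$ all norms on $\mathcal{L}(V)$ are equivalent, giving $\liminf_t\|\Theta(t)\|>0$ in the triple operator norm as well. Consequently $h'(\gamma(t))=\tfrac12\log\|\Theta(t)\|$ is bounded below, $H(h,h')=\infty$, and $h,h'$ lie in different parts, which contradicts their being in the same part; hence $e=c$.

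The hard part is the lower bound $\liminf_t\|\Theta(t)\|>0$, and its subtlety is structural: $B(\gamma(t),\gamma(t))^{-1/2}$ is diagonal for the Peirce grading of $e$, whereas $B(\gamma(t),c)M$ is adapted to that of $c$, so the decisive cancellation present when $c\leq e$ (and absent when $c\not\leq e$) is not visible term by term. The mechanism that resolves this cleanly is that $B(\gamma(t),\gamma(t))^{-1/2}$ is a positive operator with eigenvalues at least $1$ in the trace inner product, so it can only increase Hilbert norms; this lets me discard it from the estimate and reduce the whole question to the single nonvanishing criterion $B(e,c)|_{V_2(c)}\neq 0\Leftrightarrow c\not\leq e$ furnished by Lemma \ref{eicj5}.
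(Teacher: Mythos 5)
Your proposal is correct and is essentially the paper's own argument: the paper likewise evaluates $h'$ along the geodesic $\gamma$ converging to $h$, uses the detour-cost formula together with the fact that $B(\gamma(t),\gamma(t))^{-1}$ has eigenvalues at least $1$ for the trace-form inner product to produce a positive lower bound, and then invokes Lemma \ref{eicj5}(iv) and the symmetry of the roles of $h$ and $h'$ to get $c\leq e$ and $e\leq c$, hence $e=c$. The only differences are cosmetic: you argue contrapositively (with an explicit antisymmetry check for $\leq$, which the paper leaves implicit) and keep the operator $M=\sum_{1\leq i\leq j\leq q}\mu_i\mu_j P'_{ij}$ in spectral form with an operator-level norm inequality, whereas the paper identifies $M=Q_v^2$ with $v=\sum_i\mu_i^{1/2}c_i$ via \cite[Corollary 3.15]{loos} and applies the same eigenvalue bound to a specific vector $z_t=B(\gamma(t),c)w$.
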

\begin{proof}
If $h$ and $h'$ are in the same part, then $H(h,h')<\infty$.  Let $\gamma(t)$ be the geodesic converging to $h$ given in (\ref{geodesic}). As $h$ and $h'$ are Busemann,
$H(h,h') = \lim_t \rho(0,\gamma(t)) +h'(\gamma(t)) =  \lim_t t +h'(\gamma(t))$ by  (\ref{detourcost}), so that
\[
H(h,h') =
 \lim_t t+\frac{1}{2}\log \| \sum_{1\leq i\leq j\leq q}\mu_i\mu_jB(\gamma(t),\gamma(t))^{-1/2} B(\gamma(t),c)P'_{ij}\|.
\]

If we let $v =\sum_{i=1}^q \mu_i^{1/2}c_i$, then
\[
 \sum_{1\leq i\leq j\leq q}\mu_i\mu_jB(\gamma(t),\gamma(t))^{-1/2} B(\gamma(t),c)P'_{ij} =
B(\gamma(t),\gamma(t))^{-1/2} B(\gamma(t),c)Q_v^2
\]
by \cite[Corollary 3.15]{loos}. So, $H(h,h')<\infty$ implies that
\begin{equation}\label{conv0}
\|B(\gamma(t),\gamma(t))^{-1/2} B(\gamma(t),c)Q_v^2\|\to 0.
\end{equation}

We claim that this implies that $B(e,c)w=0$ for all $w\in V_2(c)$.  To show this we set $\beta_{it} = \tanh(t-\alpha_i)$ for $i=1,
\ldots,q$, and set $\beta_{it}=0$ for $i=0$. Then
\[
B(\gamma(t),\gamma(t))^{-1} = \sum_{0\leq i\leq j\leq q} (1-\beta^2_{it})^{-1}(1-\beta^2_{jt})^{-1}P_{ij}.
\]
We note  that $B(\gamma(t),\gamma(t))$ is a self-adjoint invertible operator on the Hilbert space
$(V, \langle\cdot, \cdot\rangle)$ in (\ref{<>}), by \cite[Lemma 1.2.22]{book1}.
Hence the preceding equation implies
\[
\langle B(\gamma(t),\gamma(t))^{-1} v,v\rangle\geq \langle v,v\rangle \qquad (v\in V).
\]

Suppose that there exists a $w\in V_2(c)$ with $z=B(e,c)w\neq 0$. Now letting $z_t = B(\gamma(t),c)w$, we find that
\begin{equation}\label{posinner}
\langle B(\gamma(t),\gamma(t))^{-1/2}z_t,B(\gamma(t),\gamma(t))^{-1/2}z_t\rangle = \langle B(\gamma(t),\gamma(t))^{-1}z_t,z_t\rangle \geq
\langle z_t,z_t\rangle\to \langle z,z\rangle>0.
\end{equation}

From \cite[Corollary 3.15]{loos} we know that $Q^2_v$ is invertible on $V_2(c)$, with inverse $Q^2_{v^{-1}}$ and $v^{-1}= \sum_{i=1}^q \mu_i^{-1/2}c_i$.  So, if we let $u= Q^2_{v^{-1}}w$, then there exists $\delta>0$ such that for all  large $t>0$,
\[
\|B(\gamma(t),\gamma(t))^{-1/2} B(\gamma(t),c)Q_v^2\|\geq \|B(\gamma(t),\gamma(t))^{-1/2} B(\gamma(t),c)Q_v^2u\|\|u\|^{-1} = \|B(\gamma(t),\gamma(t))^{-1/2} z_t\|\|u\|^{-1}\geq\delta
\]
by (\ref{posinner}), which contradicts (\ref{conv0}).

It now follows from Lemma \ref{eicj5} that $c\leq e$. As $H(h',h) < \infty$ as well, we can interchange the roles of $h$ and $h'$ and
deduce $e\leq c$, hence concluding the proof of  $e=c$.
\end{proof}

In the next proposition we show that $e=c$ is also a sufficient condition for $h$ and $h'$ to be in the same part.
\begin{proposition}\label{deltabsd} Let $h,h'\in D(\infty)$ be given by  (\ref{horh}) and (\ref{horh'}), respectively.
If $e=c$, then  $h$ and $h'$ are in the same part.
Moreover,  $h=h'$ if and only if  $e=c$ and
$ \sum_{i=1}^p \lambda_ie_i=\sum_{i=1}^q \mu_ic_i$.
\end{proposition}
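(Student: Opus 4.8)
The plan is to reduce everything to the detour distance on the part of $h$, and to compute the detour cost $H(h,h')$ explicitly under the standing hypothesis $e=c$, by running the geodesic $\gamma$ of \eqref{geodesic} into $h'$.

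\textbf{Reductions.} First, the easy implication of the ``Moreover'' statement. If $e=c$ and $\sum_{i=1}^p\lambda_ie_i=\sum_{i=1}^q\mu_ic_i$, call this common positive invertible element of the JB$^*$-algebra $V_2(e)$ by $P$, and set $u=\sum_{i=1}^p\lambda_i^{1/2}e_i$ and $w=\sum_{i=1}^q\mu_i^{1/2}c_i$. Both are the positive square root $P^{1/2}$, so by uniqueness of positive square roots $u=w$, hence $Q_u^2=Q_w^2$; since $Q_u^2=\sum_{1\le i\le j\le p}\lambda_i\lambda_jP_{ij}$ and $Q_w^2=\sum_{1\le i\le j\le q}\mu_i\mu_jP'_{ij}$ by \cite[Corollary 3.15]{loos}, the formulas \eqref{horh} and \eqref{horh'} coincide and $h=h'$. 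Conversely, if $h=h'$ then trivially $\delta(h,h')=0<\infty$, so $h$ and $h'$ lie in the same part and Proposition \ref{partsbsd2} forces $e=c$; it then remains only to recover $\sum_i\lambda_ie_i=\sum_i\mu_ic_i$, which will drop out of the detour computation. Thus the whole proposition follows once $H(h,h')$ is understood when $e=c$.

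\textbf{Computing $H(h,h')$ when $e=c$.} Let $\gamma$ be the geodesic \eqref{geodesic} converging to $h$, put $\alpha_i=-\log\lambda_i$ and $\beta_{it}=\tanh(t-\alpha_i)$. By Lemma \ref{hisBus} and \eqref{detourcost} we have $H(h,h')=\lim_{t\to\infty}\rho(0,\gamma(t))+h'(\gamma(t))=\lim_{t\to\infty}t+h'(\gamma(t))$, since $\rho(0,\gamma(t))=t$ for large $t$. Working in the Peirce frame of $e_1,\dots,e_p$ and using the Peirce rules one obtains the exact identities $B(\gamma(t),\gamma(t))^{-1/2}=\sum_{0\le i\le l\le p}[(1-\beta_{it}^2)(1-\beta_{lt}^2)]^{-1/2}P_{il}$ and $B(\gamma(t),e)=\sum_{0\le i\le l\le p}(1-\beta_{it})(1-\beta_{lt})P_{il}$, whence, via $\frac{1-\tanh(t-\alpha_i)}{1+\tanh(t-\alpha_i)}=e^{-2(t-\alpha_i)}$ and $e^{\alpha_i}=\lambda_i^{-1}$,
\[
B(\gamma(t),\gamma(t))^{-1/2}B(\gamma(t),e)\big|_{V_2(e)}=e^{-2t}\sum_{1\le i\le l\le p}e^{\alpha_i+\alpha_l}P_{il}=e^{-2t}\,Q_{P^{-1}},
\]
where $P=\sum_{i=1}^p\lambda_ie_i$ and I used $Q_{u^{-1}}^2=Q_{u^{-2}}=Q_{P^{-1}}$. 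Since $Q_w^2=Q_S$ with $S=\sum_{i=1}^q\mu_ic_i$ annihilates $V_1(e)\oplus V_0(e)$ and has range in $V_2(e)=V_2(c)$, the norm in \eqref{horh'} may be computed on $V_2(e)$ (cf.\ \eqref{penorm}), giving $h'(\gamma(t))=\tfrac12\log\bigl(e^{-2t}\|Q_{P^{-1}}Q_S\|\bigr)=-t+\tfrac12\log\|Q_{P^{-1}}Q_S\|$. Hence $H(h,h')=\tfrac12\log\|Q_{P^{-1}}Q_S\|<\infty$, and by symmetry $H(h',h)=\tfrac12\log\|Q_{S^{-1}}Q_P\|<\infty$. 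Therefore $\delta(h,h')<\infty$ and $h,h'$ lie in the same part, which is the first assertion.

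\textbf{The characterization.} Suppose finally $h=h'$, so $e=c$ as noted and $\delta(h,h')=0$; as both detour costs are nonnegative, $\|Q_{P^{-1}}Q_S\|\le1$ and $\|Q_{S^{-1}}Q_P\|\le1$. Now $Q_{P^{-1}}Q_S$ is a positive operator on $V_2(e)$ (each quadratic operator maps $A(e)_+$ into itself), so $Q_{P^{-1}}(S^2)=Q_{P^{-1}}Q_S(e)\ge0$ and, as $\|e\|=1$, $\|Q_{P^{-1}}(S^2)\|\le\|Q_{P^{-1}}Q_S\|\le1$; by \eqref{a<e} this forces $Q_{P^{-1}}(S^2)\le e$, and applying the order-isomorphism $Q_P$ (with inverse $Q_{P^{-1}}$) yields $S^2\le Q_P(e)=P^2$. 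Symmetrically $P^2\le S^2$, so $P^2=S^2$ and hence $P=S$, i.e.\ $\sum_{i}\lambda_ie_i=\sum_{i}\mu_ic_i$, completing the proof. The main obstacle is the middle step: verifying that the $t$-dependence cancels exactly, which relies on the Peirce-diagonal forms of $B(\gamma(t),e)$ and $B(\gamma(t),\gamma(t))^{-1/2}$ together with the tangent identity above; once $H(h,h')$ is identified with $\tfrac12\log\|Q_{P^{-1}}Q_S\|$, the order-theoretic passage from the norm bound to $S^2\le P^2$ via \eqref{a<e} and the order-isomorphism $Q_P$ is short.
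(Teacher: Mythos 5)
Your proof is correct and follows essentially the same route as the paper's: you run the geodesic $\gamma$ of \eqref{geodesic} into $h'$, use the exact Peirce-diagonal form of $B(\gamma(t),\gamma(t))^{-1/2}B(\gamma(t),e)$ together with $\tfrac{1-\tanh s}{1+\tanh s}=e^{-2s}$ to identify $H(h,h')=\tfrac12\log\|Q_{P^{-1}}Q_S\|$ (this is the paper's $\tfrac12\log\|Q_{a^{-1}}Q_eQ_bQ_e\|$, the same operator since $Q_eQ_bQ_e=Q_{Q_eb}=Q_b$), and you finish the characterization with exactly the paper's order-theoretic endgame: the norm bound, \eqref{a<e}, application of the order-preserving $Q_P$, and uniqueness of positive square roots in $A(e)_+$. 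Two remarks. First, a genuine (small) difference: for the implication that $e=c$ and $\sum_i\lambda_ie_i=\sum_i\mu_ic_i$ imply $h=h'$, the paper computes $\delta(h,h')=0$ and invokes the fact that the detour distance is a metric on Busemann points, whereas you show directly that the defining formulas \eqref{horh} and \eqref{horh'} are the same function, via uniqueness of positive square roots and \cite[Corollary 3.15]{loos}; your version is more elementary and bypasses the detour machinery for this step. Second, a harmless imprecision: $Q_{u^{-1}}^2\neq Q_{P^{-1}}$ as operators on $V_2(e)$, since the left-hand side is complex-linear while $Q_{P^{-1}}$ is conjugate-linear; the correct identities are $Q_{u^{-1}}^2=Q_{P^{-1}}Q_e$ and $Q_w^2=Q_SQ_e$, which agree with your versions only on the self-adjoint part $A(e)$. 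This does not affect your conclusions, because the two copies of the involution $Q_e$ cancel in the composition, $(Q_{P^{-1}}Q_e)(Q_SQ_e)=Q_{P^{-1}}(Q_eQ_SQ_e)=Q_{P^{-1}}Q_S$, so your formula for $H(h,h')$, and everything downstream of it, is correct.
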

\begin{proof}
Let $e=c$. Then $p=q$.
Since $h$ and $h'$ are Busemann points, we have by (\ref{detourcost}) that
\[
H(h,h') =  \lim_t t +h'(\gamma(t))=
 \lim_t \frac{1}{2}\log \| \exp(2t)\sum_{1\leq i\leq j\leq p}\mu_i\mu_jB(\gamma(t),\gamma(t))^{-1/2} B(\gamma(t),e)P'_{ij}\|,
\]
where  $\gamma(t)$ is the geodesic converging to $h$  in (\ref{geodesic}),
and we have used $e=c$.

Let $ V =\oplus_{0\leq k\leq l\leq p} V_{kl}$ be the Peirce decomposition with respect to $e_1,\ldots,e_p$.
 For  $w_{kl}\in V_{kl}$, we have
\[
B(\gamma(t),\gamma(t))^{-1/2} B(\gamma(t),e)w_{kl} =  \left(\frac{(1-\beta_{kt})(1-\beta_{lt})}{(1-\beta^2_{kt})^{1/2}(1-\beta^2_{lt})^{1/2}}\right)w_{kl}= \exp(-(t-\alpha_k))\exp(-(t-\alpha_l))w_{kl},
\]
where $\beta_{it} = \tanh(t-\alpha_i)$ for $1\leq i\leq p$ and $\beta_{0t}=0$. So,
\[
\lim_t \exp(2t)B(\gamma(t),\gamma(t))^{-1/2} B(\gamma(t),e)w_{kl} = \exp{(\alpha_k)}\exp{(\alpha_l)}w_{kl}= \lambda_k^{-1}\lambda_l^{-1}w_{kl},
\]
where $\alpha_i  = -\log \lambda_i$ for $i=1,\ldots,p$ and $\lambda_0 =1$.

For $w\in V_2(e)=\oplus_{1\leq k\leq l\leq p} V_{kl}$ we have that
\begin{multline*}
\lim_t\exp(2t)B(\gamma(t),\gamma(t))^{-1/2} B(\gamma(t),e)w
=\lim_{t}\exp(2t)\sum_{1\leq k\leq l\leq p}B(\gamma(t),\gamma(t))^{-1/2} B(\gamma(t),e)w_{kl}   \\
 = \sum_{1\leq k\leq l\leq p} \lambda_k^{-1}\lambda_l^{-1}w_{kl}
 =  \sum_{1\leq k\leq l\leq p}\lambda_k^{-1}\lambda_l^{-1}Q^2_ew_{kl}
 =  Q_{a^{-1}}Q_ew,
\end{multline*}
where $a^{-1}= \sum_{k=1}^p \lambda_k^{-1}e_k$ by \cite[Corollary 3.15]{loos}. Recall that by  (\ref{jb*}), $V_2(e)$ carries the structure of a $\mathrm{JB}^*$-algebra, in which
the self-adjoint part $A(e)$  is partially ordered by the cone $A(e)_+ =\{x^2\colon x\in A(e)\}$.
The tripotents $e_1, \ldots, e_p, c_1, \ldots, c_q$ reside in $A(e)_+$ and are idempotents in the $\mathrm{JB}$-algebra $A(e)$. Let $a=\sum_{k=1}^p \lambda_ke_k $ and $b=\sum_{i=1}^q \mu_ic_i$. Then $a$ and $b$ are invertible elements in $A(e)_+$, with inverses $a^{-1}= \sum_{k=1}^p \lambda_k^{-1}e_k$ and $b^{-1} =\sum_{i=1}^q \mu_i^{-1}c_i$, respectively.

So, as $e=c$, we now find for $v\in V_2(e) = V_2(c)$ that
\[
\lim_t\exp(2t)\sum_{1\leq i\leq j\leq q} \mu_i\mu_jB(\gamma(t),\gamma(t))^{-1/2} B(\gamma(t),e)P_{ij}'v  =
\sum_{1\leq i\leq j\leq q} \mu_i\mu_jQ_{a^{-1}}Q_eP_{ij}'v
  =   Q_{a^{-1}}Q_eQ_bQ_ev.
\]
Hence, by (\ref{penorm}),
\begin{equation}\label{penorm1}
H(h,h')= \frac{1}{2}\log\|Q_{a^{-1}}Q_eQ_bQ_eP_2(e)\| = \frac{1}{2}\log\|Q_{a^{-1}}Q_eQ_bQ_e\|  <\infty.
\end{equation}

Interchanging the roles of $h$ and $h'$, we conclude
\[\delta(h,h') = H(h,h')+H(h',h) <\infty.
\]

Finally, given $e=c$ and $a=b$, we can use the identity $Q_eQ_bQ_e = Q_{Q_eb} = Q_b$ to get
$$ Q_{a^{-1}}Q_eQ_bQ_e = \{b^{-1}, \{b, \cdot,b\}, b^{-1}\} = P_2(e),$$
which is the identity operator on $V_2(e)$, and therefore $H(h,h')=0$ by (\ref{penorm1}).
 Likewise, $H(h',h)=0$, so that $\delta(h,h')=0$ and hence $h=h'$.

Conversely, if $h=h'$, then they are in the same part.  By Proposition \ref{partsbsd2}, we have $e=c$, and  (\ref{penorm1})
implies $\|Q_{a^{-1}}Q_eQ_bQ_e\| =1$. In particular,
$$\|\{a^{-1}, b^2, a^{-1}\}\| = \|Q_{a^{-1}}Q_eQ_bQ_e e\| \leq 1\mbox{\quad and\quad }
\|\{b^{-1}, a^2, b^{-1}\}\| = \|Q_{b^{-1}}Q_eQ_aQ_ee\| \leq 1.$$
In  $A(e)$, the first inequality implies $\{a^{-1},b^2, a^{-1}\} \leq e$, by (\ref{a<e}), and hence
$$b^2 = \{a,\{a^{-1},b^2, a^{-1}\}, a\} \leq \{a,e,a\}=  a^2,$$
 whereas the second implies $a^2\leq b^2$. It follows that $a^2=b^2$ and $a=b$, since $a,b \in A(e)_+$.
\end{proof}
We now begin the proof of  Theorem \ref{exp}, which will be split into several lemmas.
Let $$\widetilde\Exp\colon V\cup V(\infty) \longrightarrow D\cup D(\infty)$$ be as defined in (\ref{ex1}) and (\ref{ex2}).
\begin{lemma}\label{exp1}
The map $\widetilde\Exp$ is a well-defined bijection which maps $V$ onto $D$ and $V(\infty)$ onto $D(\infty)$.
Further, it maps each equivalence class  in $V(\infty)/\sim$ onto an equivalence class of $D(\infty)/\sim$.
\end{lemma}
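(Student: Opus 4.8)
The plan is to treat the interior $V$ and the boundary $V(\infty)$ separately, and then combine them, since the two pieces have disjoint images. On $V$ the map $\widetilde{\Exp}$ is simply $\Exp$, which by \cite[Lemma 4.3 and Corollary 4.8]{loos} is a homeomorphism of $V$ onto $D$, hence a well-defined bijection between $V$ and $D$; moreover $\Exp(V)=D$ lies in the interior, while every function of the form (\ref{ex2}) is a horofunction in $D(\infty)$ by Theorem \ref{hbsd}. Thus it remains to analyse $\widetilde{\Exp}$ on $V(\infty)$.

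First I would establish well-definedness on $V(\infty)$. A horofunction $h$ written as in (\ref{formh2}) involves a choice of index set $I$, of mutually orthogonal minimal tripotents $e_i$, and of numbers $\alpha_i$, and distinct such choices may represent the same $h$. By Corollary \ref{h=h'}, however, the tripotent $e_I=\sum_{i\in I}e_i$ and the element $a=\sum_{i\in I}\alpha_i e_i$ of the $\mathrm{JB}$-algebra $A(e_I)$ are uniquely determined by $h$. The assigned horofunction $g=\widetilde{\Exp}(h)$ has the form (\ref{horh}) with tripotent $e=e_I$ and with $\sum_i e^{-\alpha_i}e_i=\exp(-a)$, where the exponential is computed by functional calculus in $A(e_I)$ and hence is determined by the pair $(e_I,a)$. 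Since, by the ``moreover'' part of Proposition \ref{deltabsd}, a horofunction of the form (\ref{horh}) is uniquely determined by its tripotent together with $\sum_i\lambda_i e_i$, the horofunction $g$ depends only on $h$, so $\widetilde{\Exp}$ is well-defined.

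Next I would prove that $\widetilde{\Exp}$ is a bijection of $V(\infty)$ onto $D(\infty)$. For surjectivity, given $g\in D(\infty)$ of the form (\ref{horh}) with $\lambda_i\in(0,1]$ and $\max_i\lambda_i=1$, I set $\alpha_i=-\log\lambda_i\geq 0$, so $\min_i\alpha_i=0$; the corresponding function (\ref{formh2}) is a horofunction in $V(\infty)$ by Theorem \ref{formofh}, and it maps to $g$. For injectivity, suppose $\widetilde{\Exp}(h)=\widetilde{\Exp}(h')$, with $h,h'$ carrying data $(e_I,\{\alpha_i\})$ and $(c_J,\{\beta_j\})$. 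Equality of the two images, via the ``moreover'' part of Proposition \ref{deltabsd}, forces $e_I=c_J$ and $\sum_i e^{-\alpha_i}e_i=\sum_j e^{-\beta_j}c_j$, that is $\exp(-a)=\exp(-b)$ in $A(e_I)=A(c_J)$, where $a=\sum_i\alpha_i e_i$ and $b=\sum_j\beta_j c_j$. Since the exponential is injective on the self-adjoint part of a $\mathrm{JB}$-algebra (recover $-a$ by the logarithmic functional calculus, or simply match coefficients in the unique spectral decomposition, as $t\mapsto e^{-t}$ is injective), this gives $a=b$, whence $h=h'$ by Corollary \ref{h=h'}.

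Finally I would verify that equivalence classes are preserved. All horofunctions of $V(\infty)$ are Busemann points by Corollary \ref{buse}, and all horofunctions of $D(\infty)$ are Busemann points by Lemma \ref{hisBus} together with Theorem \ref{hbsd}; hence in both boundaries the relation $\sim$ coincides with the partition into parts by \cite[Proposition 4.5]{Wa1}. By Theorem \ref{parts2}, two horofunctions $h,h'\in V(\infty)$ lie in the same part precisely when $e_I=c_J$, while by Propositions \ref{partsbsd2} and \ref{deltabsd} two horofunctions in $D(\infty)$ lie in the same part precisely when their tripotents coincide. Because $\widetilde{\Exp}$ sends the horofunction with tripotent $e_I$ to the horofunction with the same tripotent $e=e_I$, it carries same-part pairs to same-part pairs and distinct parts to distinct parts, so it maps each equivalence class of $V(\infty)/\sim$ bijectively onto an equivalence class of $D(\infty)/\sim$. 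The main obstacle throughout is purely bookkeeping: maintaining the dictionary between the additive parametrization $a=\sum_i\alpha_i e_i$ natural for $V(\infty)$ and the multiplicative parametrization $\lambda_i=e^{-\alpha_i}$ natural for $D(\infty)$, which is handled cleanly by the uniqueness statements in Corollary \ref{h=h'} and Proposition \ref{deltabsd} and by functional calculus in $A(e_I)$.
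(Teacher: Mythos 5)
Your proof is correct and follows essentially the same route as the paper's: well-definedness and injectivity via Corollary \ref{h=h'}, the uniqueness part of Proposition \ref{deltabsd} and uniqueness of spectral decompositions; surjectivity via the correspondence $\lambda_i=e^{-\alpha_i}$ together with Theorems \ref{formofh} and \ref{hbsd}; and preservation of equivalence classes via Theorem \ref{parts2}, Propositions \ref{partsbsd2} and \ref{deltabsd}, and the fact that all horofunctions are Busemann points. The only cosmetic difference is that where you invoke Proposition \ref{deltabsd} plus functional calculus in $A(e_I)$ to see that the image depends only on the pair $(e_I,a)$, the paper instead rewrites $\sum_{1\leq i\leq j\leq p}e^{-\alpha_i}e^{-\alpha_j}B(z,z)^{-1/2}B(z,e)P_{ij}$ in terms of quadratic operators to the same effect.
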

\begin{proof}
To see that $\widetilde\Exp$ is well-defined, pick $h \in V(\infty)$  with two formulations
\[
h(x) =  \Lambda_{V_2(e)}(-\frac{1}{2}(e\bo P_2(e)x+P_2(e)x\bo e)-\sum_{i=1}^p\alpha_i (e_i\bo e_i))\qquad (x\in V)
 \]
and
\[
h(x) =  \Lambda_{V_2(e')}(-\frac{1}{2}(e'\bo P_2(e')x+P_2(e')x\bo e')-\sum_{i=1}^q\gamma_i (e'_i\bo e'_i))\quad (x\in V).
 \]

 Note that
 \[
 \frac{1}{2} \log \left\|\sum_{1\leq i\leq j\leq p}e^{-\alpha_i}e^{-\alpha_j}B(z,z)^{-1/2}B(z,e)P_{ij}\right\| = \frac{1}{2} \log \left\|B(z,z)^{-1/2}B(z,e)Q(a)Q(e)P_2(e)\right\|
\qquad (z\in D),
 \]
where $a = \sum_{i=1}^p \alpha_ie_i$. Likewise,
 \[
 \frac{1}{2} \log \left\|\sum_{1\leq i\leq j\leq q}e^{-\gamma_i}e^{-\gamma_j}B(z,z)^{-1/2}B(z,e')P'_{ij}\right\| = \frac{1}{2} \log \left\|B(z,z)^{-1/2}B(z,e')Q(b)Q(e')P_2(e')\right\|
\qquad (z\in D),
 \]
where $b = \sum_{i=1}^q \gamma_ie'_i$.

It follows from Corollary \ref{h=h'} that $e =e'$, $p=q$ and $a =b$.
By relabelling we may as well assume that $\alpha_1\geq \ldots\geq \alpha_p=0$ and $\gamma_1\geq \ldots\geq \gamma_p=0$. As the eigenvalues in the spectral decomposition in $V$ are unique, we conclude that $\alpha_i=\gamma_i$ for all $i$, and hence $\widetilde\Exp$  is well-defined.

Note that it follows from Theorems \ref{hbsd} and \ref{formofh} that $\widetilde\Exp$ maps $V(\infty)$ onto $D(\infty)$.  Moreover,
given $h'$ is in the same part as $h$, with
\[
h'(x) =  \Lambda_{V_2(c)}(-\frac{1}{2}(c\bo P_2(c)x+P_2(c)x\bo c)-\sum_{i=1}^q\beta_j (c_i\bo c_i))\qquad (x\in V),
 \]
 Theorem \ref{parts2} implies $e=c$, and hence  $\widetilde\Exp(h)$ and $\widetilde\Exp(h')$ are in the same part in $D(\infty)$ as well, by Proposition \ref{partsbsd2}. Thus, the extension maps each equivalence class in $V(\infty)/\sim$ onto and equivalence class on $D(\infty)/\sim$, as all horofunctions are Busemann.

To complete the proof, we need to show that $\widetilde\Exp$ is injective on $V(\infty)$.  Let $h,h'\in V(\infty)$ be given by
\[
h(x) =  \Lambda_{V_2(e)}(-\frac{1}{2}(e\bo P_2(e)x+P_2(e)x\bo e)-\sum_{i=1}^p\alpha_i (e_i\bo e_i))\qquad (x\in V)
 \]
 and
 \[
h'(x) =  \Lambda_{V_2(c)}(-\frac{1}{2}(c\bo P_2(c)x+P_2(c)x\bo c)-\sum_{i=1}^q\beta_i (c_i\bo c_i))\qquad (x\in V).
 \]
 Set  $g=\widetilde{\Exp}(h)$ and $g' = \widetilde{\Exp}(h')$, and suppose  $g=g'$. Then $e=c$, $p=q$ and
 $\sum_{i=1}^p e^{-\alpha_i}e_i =\sum_{i=1}^q e^{-\beta_i}c_i$ by Proposition \ref{deltabsd}. By Remark \ref{uuni}, we have $\sum_{i=1}^p \alpha_ie_i =\sum_{i=1}^q \beta_ic_i$, and hence $h=h'$ by Corollary \ref{h=h'}, which  concludes the proof.
\end{proof}

\begin{lemma} Let $(v_n)$ be a sequence in $V$ converging to $h\in V(\infty)$. Then $(\widetilde{\Exp}(v_n))$ converges to $\widetilde\Exp(h)$.
\end{lemma}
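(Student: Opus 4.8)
The plan is to argue by subsequences, in the spirit of the continuity lemmas already proved. Since $\widetilde\Exp(v_n)=\Exp(v_n)=\tanh(v_n)$ lies in $D$ while $\widetilde\Exp(h)\in D(\infty)$, it suffices to show that every subsequence of $(\tanh(v_n))$ has a further subsequence converging, in the metric compactification $D\cup D(\infty)$, to $g:=\widetilde\Exp(h)$; by metrizability of the compactification this yields $\widetilde\Exp(v_n)\to\widetilde\Exp(h)$. Because $(v_n)\to h\in V(\infty)$, Lemma \ref{Rieffel} gives $\|v_n\|\to\infty$. Writing a spectral decomposition $v_n=\sum_{i=1}^r\mu_{in}c_{in}$ with $\mu_{1n}=\|v_n\|\geq\cdots\geq\mu_{rn}\geq 0$, the eigenvalues of $\tanh(v_n)=\sum_i\tanh(\mu_{in})c_{in}$ satisfy $\tanh(\mu_{1n})\to 1$, so $\tanh(v_n)$ leaves every compact subset of $D$; hence any convergent subsequence of $(\tanh(v_n))$ in the compact space $D\cup D(\infty)$ has its limit in $D(\infty)$.

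First I would pass to a subsequence (not relabelled) along which the spectral data stabilise: by compactness of the set of minimal tripotents and of $[0,\infty]$ we may assume $c_{in}\to c_i$ (mutually orthogonal minimal tripotents) and $\beta_{in}:=\mu_{1n}-\mu_{in}\to\beta_i\in[0,\infty]$ for every $i$. Put $J=\{i:\beta_i<\infty\}$ and $c_J=\sum_{i\in J}c_i$, and note $1\in J$ and $\min_{i\in J}\beta_i=0$. On the $V$-side this is exactly the setting of Theorem \ref{formofh}, via Lemma \ref{techlem}, with $\alpha_{in}=\beta_{in}$: dividing by $2^{-1}(\|\mu_{1n}^{-1}(x-v_n)\|+1)\to 1$, the internal points $h_{v_n}$ converge pointwise to the horofunction $h''\in V(\infty)$ determined by the data $(c_J,(\beta_i)_{i\in J})$. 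Since by hypothesis $h_{v_n}\to h$, we obtain $h''=h$, whence Corollary \ref{h=h'} yields $e_I=c_J$ and $\sum_{i\in I}\alpha_ie_i=\sum_{i\in J}\beta_ic_i$.

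On the $D$-side I would then compute $\lim_n h_{\tanh(v_n)}$ directly, following the calculation in the proof of Theorem \ref{hbsd} (and Lemma \ref{hisBus}). Write $y_n=\tanh(v_n)$, so its eigenvalues are $\tanh(\mu_{in})$ and $\|y_n\|=\tanh(\mu_{1n})$. The identity $1-\tanh^2 s=4(e^{s}+e^{-s})^{-2}$ gives $\bigl(\tfrac{1-\|y_n\|^2}{1-\tanh^2(\mu_{in})}\bigr)^{1/2}\to e^{-\beta_i}$ for $i\in J$, while it tends to $0$ for $i\notin J$ (as does the $i=0$ factor $(1-\|y_n\|^2)^{1/2}$). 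Using the norm convergence of the joint Peirce projections induced by $c_{1n},\dots,c_{rn}$ (Remark \ref{pij}), one gets $\lim_n(1-\|y_n\|^2)B(y_n,y_n)^{-1/2}=\sum_{i\leq j;\,i,j\in J}e^{-\beta_i}e^{-\beta_j}P_{ij}$, the summands with an index outside $J$ dropping out. Combining this with $B(z,y_n)\to B(z,\xi)$, where $\xi=\lim_n y_n\in\partial D$, and the reduction $B(z,\xi)P_{ij}=B(z,c_J)P_{ij}$ for $i,j\in J$ (valid exactly as in Theorem \ref{hbsd}, since the part of $\xi$ orthogonal to $c_J$ annihilates $P_{ij}$), the Bergman-operator form of \eqref{hd} in Lemma \ref{d2} yields
\[
\lim_n h_{y_n}(z)=\frac{1}{2}\log\Bigl\|\sum_{1\leq i\leq j;\,i,j\in J}e^{-\beta_i}e^{-\beta_j}B(z,z)^{-1/2}B(z,c_J)P_{ij}\Bigr\|\qquad(z\in D).
\]
By \eqref{ex2} the right-hand side is precisely $\widetilde\Exp(h'')$.

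Finally, since $h''=h$ and $\widetilde\Exp$ is well defined (Lemma \ref{exp1}), we have $\widetilde\Exp(h'')=\widetilde\Exp(h)=g$, so the chosen subsequence of $(\tanh(v_n))$ converges to $g$ in $D\cup D(\infty)$; as the subsequence was arbitrary, $\widetilde\Exp(v_n)\to\widetilde\Exp(h)$. I expect the \textbf{main obstacle} to be the explicit limit computation of the $D$-internal points $h_{\tanh(v_n)}$ along the stabilised subsequence: one must handle the varying minimal tripotents $c_{in}$, justify the norm convergence of the Peirce projections and the vanishing of the indices outside $J$, and carry out the reduction $B(z,\xi)P_{ij}=B(z,c_J)P_{ij}$ — essentially a sequential strengthening of Lemma \ref{hisBus} merged with the core calculation of Theorem \ref{hbsd}.
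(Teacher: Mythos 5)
Your proposal is correct and follows essentially the same route as the paper's proof: pass to subsequences along which the spectral data of $v_n$ stabilise, use Lemma \ref{techlem} and Corollary \ref{h=h'} to identify the $V$-side limit $h''$ with $h$, and then compute $\lim_n h_{\tanh(v_n)}$ on the $D$-side via the Bergman-operator form of the internal points, the $\tanh$ identities, and the norm convergence of the joint Peirce projections (Remark \ref{pij}), including the reduction $B(z,\xi)P_{ij}=B(z,c_J)P_{ij}$. The only deviations are cosmetic: where the paper identifies the computed limit with $\widetilde\Exp(h)$ by invoking Proposition \ref{deltabsd} (after converting $\sum_i \alpha_i e_i=\sum_j \beta_j c_j$ into $\sum_i e^{-\alpha_i}e_i=\sum_j e^{-\beta_j}c_j$ via Remark \ref{uuni}), you instead recognise the limit as $\widetilde\Exp(h'')$ by \eqref{ex2} and use the well-definedness of $\widetilde\Exp$ from Lemma \ref{exp1}; also, you should extract one further subsequence to ensure $\tanh(v_n)\to\xi\in\partial D$ (the eigenvalues $\tanh(\mu_{in})$ with $i\notin J$ need not converge under the extractions you listed), exactly as the paper does before writing $B(z,w_k)\to B(z,\xi)$.
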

\begin{proof}
Let $h\in V(\infty)$ with
\[
h(x) =  \Lambda_{V_2(e)}(-\frac{1}{2}(e\bo P_2(e)x+P_2(e)x\bo e)-\sum_{i=1}^p\alpha_i (e_i\bo e_i))\qquad (x\in V),
 \]
 and
  \[
 g(z) =\widetilde\Exp(h) =\frac{1}{2} \log \left\|\sum_{1\leq i\leq j\leq p}e^{-\alpha_i}e^{-\alpha_j}B(z,z)^{-1/2}B(z,e)P_{ij}\right\|
\qquad (z\in D).
 \]
It suffices to show that each subsequence $(\widetilde{\Exp}(v_k))$ has a convergent subsequence with limit $g$.

Since $h\in V(\infty)$, we know $\|v_k\|\to\infty$. Let $r_k =\|v_k\|$ and denote by
\[
v_k = \mu_{1k}c_{1k}+  \mu_{2k}c_{2k} + \cdots +  \mu_{rk}c_{rk}
\]
the spectral decomposition of $v_k$.

Taking subsequences, we may assume that
\begin{enumerate}[(1)]
\item $\beta_{ik} =r_k-\mu_{ik}\to \beta_i \in [0,\infty]$,
\item $c_{ik}\to c_i$,
\end{enumerate}
for all $i=1,\ldots,r$, where $0=\beta_1 \leq \beta_2\leq\ldots\leq \beta_r$.
Let $J=\{i\colon \beta_i<\infty\}=\{1,\ldots, q\}$ and set $c=c_1+\cdots +c_q$.

Observe that
\[
h_{v_k}(x) = \frac{\|x-v_k\|^2-\|v_k\|^2}{\|x-v_k\|+\|v_k\|} =  \frac{(2r_k)^{-1}(\|(x-v_k)\bo (x-v_k)\|-r_k^2)}{2^{-1}(\|r_k^{-1}(x-v_k)\|+1)}
\]
and $2^{-1}(\|r_k^{-1}(x-v_k)\|+1)\to 1$.
Hence  it follows from Lemma \ref{techlem} that $(h_{v_k})$ converges to
\[
h'(x) =  \Lambda_{V_2(c)}(-\frac{1}{2}(c\bo P_2(c)x+P_2(c)x\bo c)-\sum_{i=1}^q\beta_i (c_i\bo c_i))\qquad (x\in V)
\]
and, it follows from  Theorem \ref{formofh} that $h'\in V(\infty)$.

Therefore $h=h'$, and it follows from Corollary \ref{h=h'} that $e =c$, $p=q$ and $\sum_{i=1}^p \alpha_ie_i = \sum_{i=1}^q\beta_ic_i$.  By Remark \ref{uuni}, we get
\[
\sum_{i=1}^p e^{-\alpha_i}e_i = \sum_{i=1}^q e^{-\beta_i}c_i.
\]

Let $w_k = \widetilde\Exp(v_k)$ and $f_{w_k}(z) = \rho(z,w_k) -\rho(0,w_k)$ for $z\in D$. By taking a subsequence, we may assume that $w_k\to\xi\in\partial D$. Note that $\xi$ has a spectral decomposition $\xi = \sum_{i=1}^r \mu_ic_i$, where $\mu_i =1$ for $i=1,\ldots,q$.
Let $g_{-w_k}\colon D \longrightarrow D$ be the M\"obius transformation that maps $w_k$ to $0$. Then
\begin{eqnarray*}
f_{w_k}(z) & = & \rho(z,w_k) - \frac{1}{2}\log \frac{1+\tanh r_k}{1-\tanh r_k}\\
   & = & \frac{1}{2}\log \frac{1+\|g_{-w_k}(z)\|}{1-\|g_{-w_k}(z)\|} - \frac{1}{2}\log \frac{1+\tanh r_k}{1-\tanh r_k}\\
    & = & \frac{1}{2}\log \left(\frac{1-\tanh^2 r_k}{1-\|g_{-w_k}(z)\|^2}\right)\left(\frac{1+\|g_{-w_k}(z)\|}{1+\tanh r_k}\right)^2.
\end{eqnarray*}

By Lemma \ref{hhh},  we have $\lim_k \|g_{-w_k}(z)\|= 1$, so that
 \[
 \left(\frac{1+\|g_{-w_k}(z)\|}{1+\tanh r_k}\right)^2\to 1.
 \]
 As before, we set  $\mu_{0k}=0$ and let $P_{ij}^k$ be the Peirce projections with respect to
 the tripotents $c_{1k},\ldots,c_{rk}$.  Then
 \begin{multline*}
\frac{1-\tanh^2 r_k}{1-\|g_{-w_k}(z)\|^2}  =
\|(1-\tanh^2r_k)B(z,z)^{-1/2}B(z,w^k)B(w_k,w_k)^{-1/2}\|\\
 =   \|\sum_{0\leq i\leq j\leq r} \left(\frac{1-\tanh^2r_k}{1-\tanh^2\mu_{ik}}\right)^{1/2}\left(\frac{1-\tanh^2r_k}{1-\tanh^2\mu_{jk}}\right)^{1/2}B(z,z)^{-1/2}B(z,w_k)P^k_{ij}\|\\
 =   \|\sum_{0\leq i\leq j\leq r} e^{-r_k+\mu_{ik}}e^{-r_k+\mu_{jk}}\left(\frac{1+\tanh r_k}{1+\tanh\mu_{ik}}\right)\left(\frac{1+\tanh r_k}{1+\tanh\mu_{jk}}\right)B(z,z)^{-1/2}B(z,w_k)P^k_{ij}\|\\
 =   \|\sum_{0\leq i\leq j\leq r} e^{-\beta_{ik}}e^{-\beta_{jk}}\left(\frac{1+\tanh r_k}{1+\tanh(r_k-\beta_{ik})}\right)\left(\frac{1+\tanh r_k}{1+\tanh(r_k-\beta_{jk})}\right)B(z,z)^{-1/2}B(z,w_k)P^k_{ij}\|,
 \end{multline*}
 where $\beta_{0k}=r_k$.

For each $i=1,\ldots,q$, we have
 \[
 \frac{1+\tanh r_k}{1+\tanh(r_k-\beta_{ik})}\to 1.
 \]
By \cite[Remark 5.9]{cr}, the Peirce projections $P_{ij}^k$ converge to the Peirce projections $P_{ij}'$ of the minimal tripotents $c_1,\ldots,c_r$, as $c_{ik}\to c_i$ for all $i$.
 Using the fact that $w_k\to \xi = \sum_{i=1}^r \mu_ic_i$ and $\mu_i =1$ for $i=1,\ldots,q$, we find that
  \begin{multline*}
 \|\sum_{0\leq i\leq j\leq r} e^{-\beta_{ik}}e^{-\beta_{jk}}\left(\frac{1+\tanh r_k}{1+\tanh(r_k-\beta_{ik})}\right)\left(\frac{1+\tanh r_k}{1+\tanh(r_k-\beta_{jk})}\right)B(z,z)^{-1/2}B(z,w_k)P^k_{ij}\| \\
  \to  \|\sum_{1\leq i\leq j\leq q} e^{-\beta_{i}}e^{-\beta_{j}}B(z,z)^{-1/2}B(z,\xi)P'_{ij}\|
  =   \|\sum_{1\leq i\leq j\leq q} e^{-\beta_{i}}e^{-\beta_{j}}B(z,z)^{-1/2}B(z,c)P'_{ij}\|
 \end{multline*}
  and hence
 \[
f_{w_k}(z) \to \frac{1}{2}\log \|\sum_{1\leq i\leq j\leq q} e^{-\beta_{i}}e^{-\beta_{j}}B(z,z)^{-1/2}B(z,c)P'_{ij}\|.
\]
The right-hand side is a horofunction, say $f$, in $D(\infty)$ by Theorem \ref{hbsd}. As $e=c$ and  $\sum_{i=1}^p e^{-\alpha_i}e_i = \sum_{i=1}^q e^{-\beta_i}c_i$, we obtain $g=f$ from Proposition \ref{deltabsd}. This shows that
$(\widetilde\Exp(v_k))$ has a subsequence converging to $g$.
\end{proof}

Finally, we prove the following lemma which, together with the preceding ones, complete the proof of Theorem \ref{exp}.
 \begin{lemma} Let $(h_n)$ be a sequence in $V(\infty)$ converging to $h\in V(\infty)$. Then $(\widetilde{\Exp}(h_{n}))$ converges to $\widetilde\Exp(h)$.
\end{lemma}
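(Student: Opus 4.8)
The plan is to prove convergence in the pointwise topology, that is, to show $\widetilde{\Exp}(h_n)(z)\to\widetilde{\Exp}(h)(z)$ for every $z\in D$; since $D$ is proper, hence separable, the metric compactification $D\cup D(\infty)$ is metrizable and it suffices to argue along sequences. Concretely, I would show that every subsequence of $(\widetilde{\Exp}(h_n))$ admits a further subsequence converging to $\widetilde{\Exp}(h)$. Fixing a subsequence, I first normalise the data exactly as in the proof of Lemma \ref{ctinf}: writing
\[
h_n(x)=\Lambda_{V_2(c_{J_nn})}\Big(-\tfrac12(c_{J_nn}\bo P_2(c_{J_nn})x+P_2(c_{J_nn})x\bo c_{J_nn})-\sum_{j\in J_n}\beta_{jn}(c_{jn}\bo c_{jn})\Big),
\]
I pass to a further subsequence along which $J_n=J$ is constant, $c_{jn}\to c_j$ and $\beta_{jn}\to\beta_j\in[0,\infty]$ for all $j\in J$, with $\min_{j\in J}\beta_j=0$ because $\min_{j\in J}\beta_{jn}=0$ for every $n$. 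Set $J'=\{j\in J:\beta_j<\infty\}\neq\emptyset$, $c_{J'}=\sum_{j\in J'}c_j$ and $c_{J'n}=\sum_{j\in J'}c_{jn}$.

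By Lemma \ref{lem:5.17} the horofunctions $h_n$ converge in $V(\infty)$ to
\[
h'(x)=\Lambda_{V_2(c_{J'})}\Big(-\tfrac12(c_{J'}\bo P_2(c_{J'})x+P_2(c_{J'})x\bo c_{J'})-\sum_{j\in J'}\beta_j(c_j\bo c_j)\Big),
\]
so the hypothesis $h_n\to h$ forces $h=h'$. Using the definition \eqref{ex2} of $\widetilde{\Exp}$ together with the well-definedness established in Lemma \ref{exp1} (via Corollary \ref{h=h'}), the target value is
\[
\widetilde{\Exp}(h)(z)=\widetilde{\Exp}(h')(z)=\tfrac12\log\Big\|\sum_{1\le i\le j,\ i,j\in J'}e^{-\beta_i}e^{-\beta_j}B(z,z)^{-1/2}B(z,c_{J'})P'_{ij}\Big\|,
\]
where $P'_{ij}$ are the joint Peirce projections of the $c_j$. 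It therefore remains to compute $\lim_n\widetilde{\Exp}(h_n)(z)$ and match it with this expression.

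The main computation is to pass to the limit inside the norm in \eqref{ex2} for $h_n$, namely of $\sum_{1\le i\le j,\ i,j\in J}e^{-\beta_{in}}e^{-\beta_{jn}}B(z,z)^{-1/2}B(z,c_{Jn})P^n_{ij}$, where $c_{Jn}=\sum_{j\in J}c_{jn}$ and $P^n_{ij}$ are the Peirce projections of the $c_{jn}$. Any term whose index pair meets $J\setminus J'$ carries a factor $e^{-\beta_{jn}}\to0$ while the remaining operators stay bounded, so finitely many such terms vanish in operator norm; for $i,j\in J'$ the coefficients converge to $e^{-\beta_i}e^{-\beta_j}$ and $P^n_{ij}\to P'_{ij}$ in norm by Remark \ref{pij}. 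Moreover, by the Peirce multiplication rules used exactly as in the proof of Theorem \ref{hbsd}, for each $n$ and each $i,j\in J'$ one has the identity $B(z,c_{Jn})P^n_{ij}=B(z,c_{J'n})P^n_{ij}$, since every $c_{kn}$ with $k\in J\setminus J'$ (so $k\notin\{i,j\}$) annihilates $V^n_{ij}$ under the box operator. Letting $n\to\infty$ and using $c_{J'n}\to c_{J'}$ then gives $B(z,c_{Jn})P^n_{ij}\to B(z,c_{J'})P'_{ij}$, and by continuity of the norm and of $\log$ we obtain $\widetilde{\Exp}(h_n)(z)\to\widetilde{\Exp}(h')(z)=\widetilde{\Exp}(h)(z)$, completing the subsequence argument and hence the proof.

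The delicate point, and the one I expect to be the main obstacle, is the convergence $B(z,c_{Jn})P^n_{ij}\to B(z,c_{J'})P'_{ij}$, which couples two effects that must be disentangled: the norm convergence of the joint Peirce projections under $c_{jn}\to c_j$, and the purely algebraic reduction allowing one to replace the full tripotent $c_{Jn}$ by $c_{J'n}$ on the range of $P^n_{ij}$ for $i,j\in J'$. Both ingredients are available here — the former from Remark \ref{pij} (cf.\ \cite[Remark 5.9]{cr}) and the latter from the Peirce arithmetic already exploited in Theorem \ref{hbsd} — so performing the reduction at finite $n$ and only then taking limits keeps the argument clean; the remaining care is simply to control the off-$J'$ terms uniformly so that they genuinely disappear in the operator-norm limit.
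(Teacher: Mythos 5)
Your proof is correct, and its skeleton coincides with the paper's: normalise the representations of the $h_n$ along subsequences, apply Lemma \ref{lem:5.17} to obtain $h_n\to h'$ pointwise in $V(\infty)$, conclude $h=h'$ from uniqueness of pointwise limits, and identify $\lim_n\widetilde{\Exp}(h_n)(z)$ with the horofunction of $(D,\rho)$ built from the limiting data $(c_{J'},\beta_j)$. The two routes differ only in the endgame. The paper extracts from $h=h'$, via Corollary \ref{h=h'} and Remark \ref{uuni}, the relations $e=c_{J'}$ and $\sum_i e^{-\alpha_i}e_i=\sum_{j\in J'}e^{-\beta_j}c_j$, and then invokes Proposition \ref{deltabsd} (the detour-distance criterion for equality of horofunctions of $(D,\rho)$) to conclude that the limit function equals $\widetilde{\Exp}(h)$. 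You instead appeal to the well-definedness of $\widetilde{\Exp}$ established in Lemma \ref{exp1}: since $h=h'$, the value $\widetilde{\Exp}(h)$ may be computed from the representation of $h'$, which is exactly the limit you compute. This is legitimate and non-circular (Lemma \ref{exp1} precedes the present lemma and does not depend on it), and it bypasses Proposition \ref{deltabsd} at essentially no cost, well-definedness having itself been deduced from Corollary \ref{h=h'}. The other difference is one of rigour rather than route: the paper merely asserts the operator-norm convergence behind the displayed limit for $\widetilde{\Exp}(h_k)$, whereas you supply the two points it glosses over — the terms whose index pair meets $J\setminus J'$ vanish because their coefficients tend to $0$ while the operators stay bounded, and the replacement of the full tripotent $c_{Jn}$ by $c_{J'n}$ on the range of $P^n_{ij}$ for $i,j\in J'$ is the same Peirce-arithmetic reduction used in the proof of Theorem \ref{hbsd}. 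Performing that reduction at finite $n$ and only then letting $n\to\infty$, as you propose, is indeed the cleanest way to organise the estimate.
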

\begin{proof}
Let $h_n\in V(\infty)$ be given by
\[
h_n(x) =  \Lambda_{V_2(c^n)}(-\frac{1}{2}(c^n\bo P_2(c^n)x+P_2(c^n)x\bo c^n)-\sum_{i=1}^{q_n}\beta_{in} (c_{in}\bo c_{in}))\qquad (x\in V).
 \]
and let $h\in V(\infty)$ be given by
\[
h(x) =  \Lambda_{V_2(e)}(-\frac{1}{2}(e\bo P_2(e)x+P_2(e)x\bo e)-\sum_{i=1}^p\alpha_i (e_i\bo e_i))\qquad (x\in V).
 \]

We show that each subsequence $(\widetilde{\Exp}(h_{k}))$ has a convergent subsequence converging to $\widetilde\Exp(h) =g$, where
  \[
 g(z) =\frac{1}{2} \log \left\|\sum_{1\leq i\leq j\leq p}e^{-\alpha_i}e^{-\alpha_j}B(z,z)^{-1/2}B(z,e)P_{ij}\right\|
\qquad (z\in D).
 \]

Taking  further subsequences, we may assume that $q_k = q_0$ for all $k$, $\beta_{ik}\to \beta_i\in [0,\infty]$ and $c_{ik}\to c_i$ for all $i=1,\ldots,q_0$. As $\min\{\beta_{ik}\colon i=1,\ldots,q_0\} =0$, we have $\min\{\beta_{i}\colon i=1,\ldots,q_0\} =0$. Let  $J=\{i\colon \beta_i<\infty\}$. After relabelling we may assume that $J=\{1,\ldots,q\}$. Let $c =c_1+\cdots+c_q$.  Then Lemma \ref{lem:5.17} implies
 \[
 \lim_k h_k(x)  =  \Lambda_{V_2(c)}(-\frac{1}{2}(c\bo P_2(c)x+P_2(c)x\bo c)-\sum_{i=1}^{q}\beta_{i} (c_{i}\bo c_{i}))\qquad (x\in V).
 \]
 The right-hand side is horofunction by Theorem \ref{formofh}, say $h'\in V(\infty)$. As $h_{k}\to h$, we conclude that $h=h'$ and hence Corollary \ref{h=h'} gives $e =c$, $p=q$, and $\sum_{i=1}^p \alpha_ie_i = \sum_{i=1}^q\beta_ic_i$.  By Remark \ref{uuni}, we have
\[
\sum_{i=1}^p e^{-\alpha_i}e_i = \sum_{i=1}^q e^{-\beta_i}c_i.
\]

 The right-hand side of the following limit is a horofunction $g'$ in $D(\infty)$, by Theorem \ref{hbsd}.
\begin{multline*}
\widetilde\Exp(h_k) =
 \frac{1}{2}\log \|\sum_{1\leq i\leq j\leq q_0} e^{-\beta_{ik}}e^{-\beta_{jk}}B(z,z)^{-1/2}B(z,c^k)P^k_{ij}\| \\
 \to    \frac{1}{2}\log\|\sum_{1\leq i\leq j\leq q} e^{-\beta_{i}}e^{-\beta_{j}}B(z,z)^{-1/2}B(z,c)P'_{ij}\|,
  \end{multline*}
  where  $P_{ij}^k$ are the Peirce projections for the tripotents $c_{1k}, \ldots, c_{q_0k}$, and  $P_{ij}'$  the Peirce projections for the tripotents $c_1, \ldots, c_q$.
  As $e=c$ and  $\sum_{i=1}^p e^{-\alpha_i}e_i = \sum_{i=1}^q e^{-\beta_i}c_i$, Proposition \ref{deltabsd}
   gives  $g=g'$. We conclude that
$(\widetilde\Exp(h_k))$ has a subsequence converging to $g$, which completes the proof.
 \end{proof}

\small

\end{document}